\title{An alternative $\mathbb{Q}$-form of the cyclotomic double shuffle Lie algebra}
\author{Hidekazu Furusho}
\address{\scriptsize Graduate School of Mathematics, Nagoya University, Furo-cho, Chikusa-ku, Nagoya, 464-8602, Japan.}
\email{furusho@math.nagoya-u.ac.jp}
\author{Khalef Yaddaden}
\address{\scriptsize Graduate School of Mathematics, Nagoya University, Furo-cho, Chikusa-ku, Nagoya, 464-8602, Japan.}
\email{khalef.yaddaden.c8@math.nagoya-u.ac.jp}
\date{February 2, 2025}
\newtheorem{thm}{Theorem}[section]
\newtheorem{lem}[thm]{Lemma}
\newtheorem{cor}[thm]{Corollary}
\newtheorem{prop}[thm]{Proposition}
\theoremstyle{definition} \newtheorem{rem}[thm]{Remark}}
\theoremstyle{definition} \newtheorem{defn}[thm]{Definition}}
\theoremstyle{definition} \newtheorem{thm-defn}[thm]{Theorem-Definition}}
\theoremstyle{definition}  }
\theoremstyle{definition}  }
\newtheorem{prob}[thm]{Problem}
{\theoremstyle{remark} }
\numberwithin{equation}{section}
\newcommand{\Q}{\mathbb{Q}}
\newcommand{\Z}{\mathbb{Z}}
\newcommand{\Li}{\operatorname{Li}}
\newcommand{\dmr}{\mathfrak{dmr}}
\newcommand{\dmrd}{\mathfrak{dmrd}}
\DeclareFontFamily{U}{wncy}{}
\DeclareFontShape{U}{wncy}{m}{n}{<->wncyr10}{}
\DeclareSymbolFont{mcy}{U}{wncy}{m}{n}
\DeclareMathSymbol{\sh}{\mathord}{mcy}{"78} 
\newlist{caselist}{enumerate}{1}
\setlist[caselist]{label=\textbf{Case \arabic*}, align=left, wide=1pt}
\subjclass[2020]{Primary 
11M32, 
16T05. 
Secondary 11F32. 
}
\keywords{congruent multiple zeta values, multiple polylogarithms, double shuffle relations, double shuffle Lie algebra.}
\begin{document}


\begin{abstract}
    We present an alternative $\mathbb{Q}$-form for Racinet's cyclotomic double shuffle Lie algebra, inspired by the double shuffle relations among congruent multiple zeta values studied by Yuan and Zhao. Our main result establishes an invariance characterization theorem, demonstrating how these two $\mathbb{Q}$-forms can be reconstructed from each other under Galois action. 
\end{abstract}


\maketitle

{\footnotesize \tableofcontents}

\setcounter{section}{-1}
\section{Introduction}
Multiple zeta values (MZVs in short) are real numbers defined by the following series
\begin{equation*}
    \zeta(k_1, \dots, k_r) = \sum_{n_1 > \dots > n_r > 0} \frac{1}{n_1^{k_1} \cdots n_r^{k_r}},
\end{equation*}
where $r, k_1, \dots, k_r$ are positive integers with $k_1 \neq 1$ (\cite{Zag94}). These values appear in the studies of many subjects in mathematics and physics such as mixed Tate motives, knot invariant, the KZ-equation and Feynman integrals to name a few (for example, the reader may refer to \cite{B15,BF,Zha16} for a comprehensive presentation).

For a positive integer $N$, one may consider two level $N$ generalizations of multiple zeta values, such that MZV correspond to the $N=1$ case. The first generalization (cf. \cite{Gon98}), called \emph{multiple polylogarithm values at $N$\textsuperscript{th} roots of unity} ($N$-MPVs in short), are complex numbers defined by the following series
\begin{equation}
    \label{NMPV}
    \Li_{(k_1, \dots, k_r)}(\zeta_1, \dots, \zeta_r) = \sum_{n_1 > \dots > n_r > 0} \frac{\zeta_1^{n_1} \cdots \zeta_r^{n_r}}{n_1^{k_1} \cdots n_r^{k_r}},
\end{equation}
where $r, k_1, \dots, k_r$ are positive integers and $\zeta_1, \dots, \zeta_r$ are $N$\textsuperscript{th} roots of unity with $(k_1, \zeta_1) \neq (1, 1)$.
They are also called multiple $L$-values in \cite{AK04}, cyclotomic multiple zeta values in \cite{Ter04} or the colored multiple zeta values in \cite{BJOP02}.
The second generalization, called \emph{$N$-congruent multiple zeta values} ($N$-CMZVs in short) are real numbers defined by the following series
\begin{equation}
    \label{NMZV}
    \zeta_{(\alpha_1, \dots, \alpha_r)}^{\bmod N}{(k_1, \dots, k_r)}
    = \sum_{\substack{n_1 > \dots > n_r > 0 \\ n_j \in \alpha_j \ 1 \leq j \leq r}} \frac{1}{n_1^{k_1} \cdots n_r^{k_r}},
\end{equation}
where $r, k_1, \dots, k_r \in \Z_{>0}$ with $k_1 \neq 1$ and $\alpha_1, \dots, \alpha_r \in \Z/N\Z$. They are also called the \emph{level $N$ multiple zeta value} in \cite{YZ16}. Denote by $\iota : \llbracket 1, N\rrbracket \to \Z / N \Z$ the substitution that identifies each element of $\llbracket 1, N\rrbracket$ with its equivalence class.
It has been established in \cite[(8)]{YZ16} that $N$-CMZVs \eqref{NMZV} are related to $N$-MPVs \eqref{NMPV} by the following formula 
\begin{equation}
    \label{NMPV_NCMZV} 
    \zeta_{(\alpha_1, \dots, \alpha_r)}^{\bmod N}{(k_1, \dots, k_r)}
    = \frac{1}{N^r} \sum_{m_1 = 1}^N \cdots \sum_{m_r = 1}^N \zeta_N^{-(m_1 \iota^{-1}(\alpha_1) + \cdots + m_r \iota^{-1}(\alpha_r))} \Li_{(k_1, \dots, k_r)}(\zeta_N^{m_1}, \dots, \zeta_N^{m_r}), 
\end{equation}
where $\zeta_N := \exp\left(\frac{2\pi\mathrm{i}}{N}\right)$. \\

The values \eqref{NMPV} can be expressed as an iterated integral as follows (see for example \cite{Gon98})
\begin{equation}
    \label{MPV_int}
    \Li_{(k_1, \dots, k_r)}(\zeta_1, \dots, \zeta_r) = \int_0^1 \Omega_0^{k_1-1} \Omega_{\zeta_1} \Omega_0^{k_2-1} \Omega_{\zeta_1 \zeta_2} \cdots \Omega_0^{k_r-1} \Omega_{\zeta_1 \cdots \zeta_r}, 
\end{equation}
where $\Omega_0 = \frac{dt}{t}$ and $\Omega_\zeta = \frac{dt}{\zeta^{-1}-t}$, for any root of unity $\zeta$. Thanks to formula \eqref{NMPV_NCMZV} and the iterated integral expression \eqref{MPV_int}, one deduces the following identity (see \cite[p.185]{YZ16})
\[
    \zeta_{(\alpha_1, \dots, \alpha_r)}^{\bmod N}{(k_1, \dots, k_r)} = \frac{1}{N^r} \int_0^1 \omega^{k_1-1} \omega_{\alpha_1 - \alpha_2} \cdots \omega^{k_{r-1}-1} \omega_{\alpha_{r-1} - \alpha_r} \omega^{k_r-1} \omega_{\alpha_r}, 
\]
where $\omega = \frac{dt}{t}$ and $\omega_\alpha = \frac{N t^{\iota^{-1}(\alpha) - 1}dt}{1-t^N}$ for any $\alpha \in \Z / N \Z$. One may also check that \cite[p.185]{YZ16}
\begin{equation}
    \label{eq:diff_1forms}
    \omega = \Omega_0 \text{ and } \omega_\alpha = \sum_{m=1}^N \zeta_N^{-m \iota^{-1}(\alpha)} \Omega_{\zeta_N^m}.
\end{equation}

Using iterated sums and iterated integral expression, one deduces linear and algebraic relations between $N$-MPVs called the (regularized) \emph{double shuffle} relations. To describe these relations Hoffmann \cite{Hof97} introduced a word algebra setting whose letters are free noncommutative variables assimilated to the differential $1$-forms $\Omega_0$ and $\Omega_\zeta$ ($\zeta \in \mu_N$). This setting has been utilized to provide two kinds of algebraic frameworks for these relations. The first one, introduced by Racinet \cite{Rac02} uses noncommutative formal power series Hopf algebras. This framework introduced a $\Q$-Lie algebra $\dmr_0^{\mu_N}$, which is a key ingredient in Racinet's main result on the torsor structure of (regularized) double shuffle and distribution relations. The second framework, introduced by Arakawa-Kaneko \cite{AK04}, was inspired by Ihara-Kaneko-Zagier \cite{IKZ} ($N=1$) and utilizes noncommutative free polynomial algebras. The equivalence between the two frameworks is related to dual nature of the relation between algebras and Hopf algebras in general.
The relation \eqref{NMPV_NCMZV} suggests that $N$-CMZVs also satisfy (regularized) double shuffle relations. In fact, this has been introduced by Yuan-Zhao \cite{YZ16} and written down precisely by Kanno in \cite{K24} thanks to the word algebra setting whose letters are the free noncommutative variables associated to the differential $1$-forms $\omega$ and $\omega_\alpha$ ($\alpha \in \Z/N\Z$). \\

In this paper, we utilize the identities \eqref{eq:diff_1forms} to construct an algebra isomorphism between the free noncommutative series algebras with coefficients in the cyclotomic field $\Q(\mu_N)$ associated to each set of differential $1$-forms (see Lemma \ref{lem:iso_mathcalF}). This enables us to introduce a dual equivalent of the formalism of \cite{K24}, which facilitates the formulation of double shuffle relations among $N$-CMZVs within the formalism of \cite{Rac02}. We then show that this algebra isomorphism is in fact a Hopf algebra isomorphism with respect to the newly introduced coproducts and the coproducts of \cite{Rac02}, thus establishing the relations between both formalisms. After that, we define a $\Q$-Lie algebra $\dmr_0^{[N]}$ related to the (regularized) double shuffle relations among $N$-CMZVs and establish the following result:

\begin{thm}[Theorem \ref{main theorem}]
    There is a $\Q(\mu_N)$-Lie algebra isomorphism
    \begin{equation*}
        \Q(\mu_N) \ \hat{\otimes}_\Q \ \dmr_0^{[N]} \xrightarrow{\simeq} \Q(\mu_N) \ \hat{\otimes}_\Q \ \dmr_0^{\mu_N}.
    \end{equation*}
\end{thm}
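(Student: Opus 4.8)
The plan is to show that the $\Q(\mu_N)$-linear Hopf algebra isomorphism coming from the change of variables \eqref{eq:diff_1forms}, namely the base change of the algebra isomorphism of Lemma \ref{lem:iso_mathcalF} upgraded (in the preceding sections) to a Hopf isomorphism for \emph{both} the shuffle and the harmonic coproducts, restricts to the asserted Lie algebra isomorphism. Write $\Phi$ for this map after base change to $\Q(\mu_N)$. The two objects $\dmr_0^{[N]}$ and $\dmr_0^{\mu_N}$ are cut out inside their respective completed free Lie algebras by three kinds of conditions: (i) being a Lie element, i.e.\ primitive for the deconcatenation (shuffle) coproduct $\Delta_{\sh}$; (ii) a harmonic (stuffle) condition, asserting that a suitably regularized/corrected series $\psi_{\mathrm{corr}}$ is primitive for the harmonic coproduct $\Delta_*$; and (iii) the $\dmr_0$ normalization on the relevant low-depth coefficients. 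Since $\Phi$ is in particular a coalgebra morphism for $\Delta_{\sh}$, it sends shuffle-primitives to shuffle-primitives, hence it already induces a $\Q(\mu_N)$-linear isomorphism of the two completed free Lie algebras that is a morphism for the inherited bracket. It therefore suffices to verify that $\Phi$ carries each of (i)--(iii) for $\dmr_0^{[N]}$ to the corresponding condition for $\dmr_0^{\mu_N}$; I also note that scalar extension to $\Q(\mu_N)$ is forced here because $\zeta_N$ enters \eqref{eq:diff_1forms}, so no such identification can be expected over $\Q$ alone.

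For condition (i) there is nothing to do beyond the remark above: deconcatenation is literally the same combinatorial coproduct in both frameworks, so $\Phi$, being linear and multiplicative, maps Lie elements to Lie elements. For condition (ii) I would use that $\Phi$ is also a coalgebra morphism for the harmonic coproduct $\Delta_*$ (the content of the Hopf isomorphism established earlier), whence harmonic-primitives map to harmonic-primitives; the remaining point is that $\Phi$ intertwines the two regularization/correction maps $\psi \mapsto \psi_{\mathrm{corr}}$. Since each correction is built from the generating series of the divergent letter together with the algebra structure, I would check this compatibility directly on generators, determining how $\Phi$ acts on the relevant letter through \eqref{eq:diff_1forms}, and then extend by multiplicativity. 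For condition (iii), the $\dmr_0$ normalization is a finite linear constraint on specified coefficients; under the Fourier-type transform \eqref{NMPV_NCMZV} together with \eqref{eq:diff_1forms} these coefficients transform by the associated invertible matrix, so the vanishing locus on the $\dmr_0^{[N]}$ side maps bijectively onto that on the $\dmr_0^{\mu_N}$ side.

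The main obstacle I anticipate lies in step (ii): the compatibility with regularization and the exact matching of the two harmonic structures. The shuffle side is formal, but the harmonic coproducts are genuinely different in nature, the cyclotomic stuffle on the $\dmr_0^{\mu_N}$ side being governed by the multiplicative group law on $\mu_N$, whereas on the $\dmr_0^{[N]}$ side it reflects the additive structure of $\Z/N\Z$ entering \eqref{NMZV}. The transform \eqref{NMPV_NCMZV} is precisely the bridge between these two group laws, and the crux is to confirm that it intertwines the two harmonic coproducts in a way compatible with the respective regularizations, that is, that $\Phi$ really is a Hopf morphism for $\Delta_*$ and commutes with $\psi \mapsto \psi_{\mathrm{corr}}$. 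Once this is secured, combining (i)--(iii) shows that $\Phi$ maps $\Q(\mu_N)\,\hat\otimes_\Q\,\dmr_0^{[N]}$ bijectively onto $\Q(\mu_N)\,\hat\otimes_\Q\,\dmr_0^{\mu_N}$ as Lie algebras, which is the asserted isomorphism.
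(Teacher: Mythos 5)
Your overall strategy --- transporting the defining conditions of $\dmr_0^{[N]}$ through the Fourier-type isomorphism $\mathcal{F}$ of Lemma \ref{lem:iso_mathcalF}, using that it is a Hopf isomorphism for both the shuffle and the harmonic structures --- is indeed the paper's strategy for matching conditions \ref{dmrN_i}--\ref{dmrN_iv} of Definition \ref{defn:Congruent double shuffle Lie algebra} with conditions \ref{dmrmuN_i}--\ref{dmrmuN_iv} of Definition \ref{dmrmuN}. But there is a genuine gap at the point you dismiss in a single clause: the Lie brackets in the statement are \emph{not} the brackets ``inherited'' by shuffle-primitives (i.e.\ commutators in the completed free Lie algebra). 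They are the Ihara-type brackets \eqref{eq: bracket for dmr} and \eqref{eq: bracket for dmr[N]}, $\langle \psi_1,\psi_2\rangle = d_{\psi_1}(\psi_2)-d_{\psi_2}(\psi_1)+[\psi_1,\psi_2]$, built from twisted derivations: $d_\psi$ sends $x_\zeta \mapsto [x_\zeta, t_\zeta(\psi)]$, while on the congruent side $\tilde d_{\widetilde\psi}$ sends $\tilde x_\alpha \mapsto \widetilde{T}_{\iota^{-1}(\alpha)}\bigl(\bigl[\sum_{\beta} \tilde x_\beta,\ \widetilde\psi\bigr]\bigr)$. Proving that $\mathcal{F}$ intertwines these brackets is the technical heart of the paper's proof: it requires the intertwining $\mathcal{F}\circ\tilde t_a = t_{\zeta_N^a}\circ\mathcal{F}$ (Lemma \ref{tzetaa_ta}), the construction of the averaging operators $\widetilde{T}_a$ together with Lemma \ref{lem: Q-structure for T} showing they preserve the $\Q$-form (which is exactly what makes claim \ref{LA_dmrN} of Theorem \ref{main theorem} --- that $\dmr_0^{[N]}$ is a Lie algebra over $\Q$ --- work), and the resulting commutation $\mathcal{F}\circ\tilde d_{\widetilde\psi} = d_{\mathcal{F}(\widetilde\psi)}\circ\mathcal{F}$ (Proposition \ref{prop: commutativity for d}). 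Even if your steps (i)--(iii) were carried out completely, they would only produce a $\Q(\mu_N)$-linear bijection between the two spaces, not an isomorphism for the stated Lie structures; note that neither $\dmr_0^{\mu_N}$ nor $\dmr_0^{[N]}$ is a Lie algebra for the plain commutator, so there is no way to sidestep this verification.

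A second, smaller defect is in your step (ii): you propose to check that $\Phi$ commutes with the correction maps ``on generators and then extend by multiplicativity.'' This cannot work as stated, because $\psi\mapsto\psi_\ast$ (resp.\ $\widetilde\psi\mapsto\widetilde\psi_{\tilde\ast}$) is not an algebra morphism: it is the linear map $\pi_Y\circ\mathbf{p}^{-1}$ (resp.\ $\pi_{\widetilde Y}\circ\widetilde{\mathbf{q}}^{-1}$) plus a correction series in the divergent coefficients. The paper instead establishes the two commutative diagrams of Lemma \ref{lem:diagpi_diagqp}, namely $\mathcal{F}_Y\circ\pi_{\widetilde Y} = \pi_Y\circ\mathcal{F}$ and $\mathcal{F}\circ\widetilde{\mathbf{q}} = \mathbf{p}\circ\mathcal{F}$, and then computes $\mathcal{F}_Y(\widetilde\psi_{\tilde\ast})=\psi_\ast$ directly; this computation is also where the normalization $\frac{(-1)^{n-1}}{n\,N^{n+1}}$ with the sums over $a, b_1,\dots,b_n$ in the definition of $\widetilde\psi_{\tilde\ast}$ is seen to collapse to Racinet's $\frac{(-1)^{n-1}}{n}(\psi\mid x_0^{n-1}x_1)y_{1,1}^n$. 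Your steps (i) and (iii) are correct and agree with the paper's treatment of conditions \ref{dmrmuN_i}--\ref{dmrmuN_iii}.
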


Finally, we focus our interest in the $\Q$-forms of these Lie algebra. In order to do so, we introduce a suitable action of the Galois group $\mathrm{Gal}(\Q(\mu_N)/\Q)$, which enables us to establish the following result:

\begin{thm}[Theorem \ref{Galois Descent theorem}]
    There is a $\Q$-Lie algebra isomorphism between $\dmr_0^{\mu_N}$ (resp. $\dmr_0^{[N]}$) and the invariant subspace of $\Q(\mu_N) \ \hat{\otimes}_\Q \ \dmr_0^{[N]}$ (resp. $\Q(\mu_N) \ \hat{\otimes}_\Q \ \dmr_0^{\mu_N}$) under the action of $\mathrm{Gal}(\Q(\mu_N)/\Q)$.
\end{thm}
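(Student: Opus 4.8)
The plan is to deduce both isomorphisms from faithfully flat (Galois) descent, using the $\Q(\mu_N)$-Lie algebra isomorphism $\Phi$ of Theorem \ref{main theorem} (built in Lemma \ref{lem:iso_mathcalF}) to transport one $\Q$-form onto the other. First I would record the descent principle in the form needed here: if $V$ is a graded $\Q(\mu_N)$-vector space with finite-dimensional graded pieces, equipped with a $\sigma$-semilinear action of $G := \mathrm{Gal}(\Q(\mu_N)/\Q)$ preserving the grading (so $\sigma(\lambda v) = \sigma(\lambda)\,\sigma(v)$ for $\lambda \in \Q(\mu_N)$), then $V^G$ is a $\Q$-form of $V$ and the multiplication map $\Q(\mu_N) \hat\otimes_\Q V^G \to V$ is an isomorphism. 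This is Speiser's theorem (the vanishing of $H^1(G, \mathrm{GL}_n(\Q(\mu_N)))$) applied degree by degree, followed by passage to the completion; since the geometric Galois actions below respect the weight grading of the free algebras, this topological refinement is harmless.

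Second, I would pin down the two actions. Writing $e_0, e_\zeta$ ($\zeta \in \mu_N$) for Racinet's variables and $f_0, f_\alpha$ ($\alpha \in \Z/N\Z$) for the variables attached to $\omega, \omega_\alpha$, the relevant \emph{geometric} action on the Racinet side is the $\sigma$-semilinear algebra automorphism $\gamma_\sigma$ determined by $e_0 \mapsto e_0$ and $e_\zeta \mapsto e_{\sigma(\zeta)}$, and on the congruent side the $\sigma$-semilinear automorphism $\gamma'_\sigma$ determined by $f_0 \mapsto f_0$ and $f_\alpha \mapsto f_{c(\sigma)\alpha}$, where $\sigma(\zeta_N) = \zeta_N^{c(\sigma)}$. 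The crucial point — and the heart of the argument — is that $\Phi$ intertwines each geometric action with the \emph{naive} scalar action (fix variables, act on coefficients) on the opposite side. Concretely, using $\Phi(f_\alpha) = \sum_{m} \zeta_N^{-m \iota^{-1}(\alpha)} e_{\zeta_N^m}$ from \eqref{eq:diff_1forms} and the reindexing $m \mapsto c(\sigma) m$, one checks $\gamma_\sigma \circ \Phi = \Phi \circ \nu_\sigma$, where $\nu_\sigma$ is the naive scalar action on the congruent side; the inverse relation $\Omega_{\zeta_N^m} = \tfrac1N \sum_\alpha \zeta_N^{m\iota^{-1}(\alpha)} \omega_\alpha$ gives symmetrically $\gamma'_\sigma \circ \Phi^{-1} = \Phi^{-1} \circ \nu^{\mathrm{Rac}}_\sigma$. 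Thus $\Phi$ is $G$-equivariant for the pair $(\nu, \gamma)$ and $\Phi^{-1}$ is $G$-equivariant for $(\nu^{\mathrm{Rac}}, \gamma')$.

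Third, I would assemble the conclusion. The naive scalar action on the congruent free algebra has invariants the free $\Q$-algebra on the $f$'s, so its restriction to $\Q(\mu_N) \hat\otimes_\Q \dmr_0^{[N]}$ has invariant subspace $1 \otimes \dmr_0^{[N]}$; likewise for $\nu^{\mathrm{Rac}}$ and $\dmr_0^{\mu_N}$. Because $\Phi$ is a $\Q(\mu_N)$-Lie algebra isomorphism and intertwines the actions as above, the subspace $\Q(\mu_N) \hat\otimes_\Q \dmr_0^{\mu_N} = \Phi(\Q(\mu_N) \hat\otimes_\Q \dmr_0^{[N]})$ is $\gamma$-stable and its $\gamma$-invariants equal $\Phi(1 \otimes \dmr_0^{[N]})$; by descent this is a $\Q$-form and $\Phi^{-1}$ restricts to a $\Q$-Lie algebra isomorphism $(\Q(\mu_N) \hat\otimes_\Q \dmr_0^{\mu_N})^{\gamma} \xrightarrow{\simeq} \dmr_0^{[N]}$. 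Running the same argument with $\gamma'$ and $\Phi^{-1}$ yields $(\Q(\mu_N) \hat\otimes_\Q \dmr_0^{[N]})^{\gamma'} \xrightarrow{\simeq} \dmr_0^{\mu_N}$. Since permuting the variables is an algebra automorphism and the Ihara-type bracket is defined over $\Q$, both actions act by Lie automorphisms, so the invariant subspaces are genuine $\Q$-Lie subalgebras and the restrictions of $\Phi^{\pm 1}$ are Lie isomorphisms.

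I would expect the main obstacle to be the equivariance statement of the second paragraph together with the verification that the geometric actions preserve $\dmr_0^{\mu_N}$ and $\dmr_0^{[N]}$ (rather than merely the ambient free algebras): one must confirm that the defining regularized double shuffle conditions are stable under permutation of the root-of-unity-indexed variables. This stability is obtained most cleanly through $\Phi$ and Theorem \ref{main theorem} — since $\dmr_0^{[N]}$ is $\nu$-stable and $\Phi$ intertwines $\nu$ with $\gamma$, the image $\Q(\mu_N)\hat\otimes_\Q\dmr_0^{\mu_N}$ is automatically $\gamma$-stable — but an intrinsic check that the coproducts and shuffle products underlying the definition of $\dmr_0$ are equivariant under these geometric actions is what ultimately makes the chosen Galois action \emph{suitable}.
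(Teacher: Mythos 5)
Your proposal is correct and follows essentially the same route as the paper: your two equivariance claims $\gamma_\sigma \circ \Phi = \Phi \circ \nu_\sigma$ and $\gamma'_\sigma \circ \Phi^{-1} = \Phi^{-1} \circ \nu^{\mathrm{Rac}}_\sigma$ are precisely the paper's Lemma \ref{lem:galois_action_compatibility}, and your assembly step --- that under $\Phi^{\pm 1}$ the invariants of the semilinear geometric action on one side correspond to the invariants of the naive coefficient-only action on the other, the latter being exactly the $\Q$-forms --- is exactly the paper's proof of Theorem \ref{Galois Descent theorem}, with the stability statements playing the role of the paper's Proposition \ref{prop:dmr_stability} and Corollary \ref{cor: action of GN on QmuNxdmr0}. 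The only deviation is your appeal to Speiser's theorem ($H^1$-vanishing): this is superfluous, since the explicit isomorphism $\mathcal{F}$ already transports the known $\Q$-form onto the invariant subspace, so no descent machinery is needed and the paper invokes none.
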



\paragraph{\textbf{Acknowledgments}}
This project was partially supported by first author's JSPS KAKENHI Grants 24K00520 and 24K21510 and second author's JSPS KAKENHI Grant 23KF0230. The authors would like to thank Benjamin Enriquez for fruitful comments on this project. \\


\paragraph{\textbf{Notation}}
Throughout this paper, let $N \geq 3$ be a positive integer and $\mu_N$ be the group of complex roots of unity. Denote by $\iota : \llbracket 1, N\rrbracket \to \Z / N \Z$ the substitution that identifies each element of $\llbracket 1, N\rrbracket$ with its equivalence class. Finally, let $\Q$ be a commutative $\Q$-algebra with unit.

\section{Algebraic framework related to \texorpdfstring{$N$}{N}-MPVs}
This section reviews the algebras in which one expresses shuffle and harmonic relations between $N$-MPVs. Their dual counterparts are Hopf algebras in which Racinet formulates \cite{Rac02} the cyclotomic double shuffle Lie algebra $\dmr_0^{\mu_N}$.

\subsection{Shuffle and harmonic algebras}
Denote by $\Q\langle X\rangle$ the free noncommutative polynomial \linebreak $\Q$-algebra with unit over the alphabet $X := \{x_0, x_\zeta \mid \zeta \in \mu_N \}$. \newline
Consider the set $Y := \{y_{k, \zeta} \mid (k, \zeta) \in \Z_{>0} \times \mu_N \}$ and denote by $\Q\langle Y\rangle$ the free noncommutative polynomial $\Q$-algebra with unit over $Y$.
The $\Q$-algebra $\Q\langle Y\rangle$ can be seen as a subalgebra of $\Q\langle X\rangle$ thanks to the injective algebra morphism $\Q\langle Y\rangle \hookrightarrow \Q\langle X\rangle$ given by $y_{k, \zeta} \mapsto x_0^{k-1} x_\zeta$, for any $(k, \zeta) \in \Z_{>0} \times \mu_N$.

\begin{lem}[\cite{AK04, Zha10}]
    \begin{enumerate}[label=(\alph*), leftmargin=*]
        \item The $\Q$-linear space $\Q\langle X\rangle$ is equipped with a commutative $\Q$-algebra structure with respect to the \emph{shuffle product} given inductively by
        \[
            1 \ \sh \ w = w \ \sh \ 1 = w,
        \]
        for a word $w$ in $\Q\langle X\rangle$, and
        \[
            u w_{1} \ \sh \ v w_{2} = u(w_{1} \ \sh \ v w_{2}) + v(u w_{1} \ \sh \ w_{2}),
        \]
        for words $w_1, w_2$ in $\Q\langle X\rangle$ and $u, v \in X$; and extending by $\mathbb{Q}$-bilinearity.
        \item The $\Q$-linear space $\Q\langle Y\rangle$ is equipped with a commutative $\Q$-algebra structure with respect to the \emph{harmonic product} (a.k.a. stuffle product) given inductively by
        \[
            1 \ast w = w \ast 1 = w,
        \]
        for a word $w$ in $\Q\langle Y\rangle$, and
        \[
            y_{k_1, \zeta_1} w_{1} \ast y_{k_2, \zeta_2} w_{2} = y_{k_1, \zeta_1}(w_{1} \ast y_{k_2, \zeta_2} w_{2}) + y_{k_2, \zeta_2} (y_{k_1, \zeta_1} w_{1} \ast w_{2}) + y_{k_1+k_2, \zeta_1 \zeta_2} (w_{1} \ast w_{2})
        \]
        for words $w_1, w_2$ in $\Q\langle Y\rangle$, $(k_1, \zeta_1), (k_2, \zeta_2) \in \Z_{>0} \times \mu_N$; and extending by $\Q$-bilinearity. 
    \end{enumerate}
\end{lem}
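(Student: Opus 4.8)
The plan is to establish, for each product, the three facts that together yield a commutative $\Q$-algebra structure: that the inductive rule determines a well-defined $\Q$-bilinear operation, that it is commutative, and that it is associative. Well-definedness is immediate once we grade by word length and observe that each inductive clause strictly decreases the combined length of its two arguments; the recursion therefore terminates and fixes the product uniquely on pairs of words, and $\Q$-bilinear extension is then forced. The unit axiom holds by the base clauses $1 \sh w = w$ and $1 \ast w = w$.

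Commutativity I would prove by induction on the sum of the lengths of the two words, the base case being the unit clause. For the inductive step in the shuffle case, write $uw_1 \sh vw_2 = u(w_1 \sh vw_2) + v(uw_1 \sh w_2)$ and compare with $vw_2 \sh uw_1 = v(w_2 \sh uw_1) + u(vw_2 \sh w_1)$; the induction hypothesis gives $w_1 \sh vw_2 = vw_2 \sh w_1$ and $uw_1 \sh w_2 = w_2 \sh uw_1$, so the two expansions coincide term by term. The harmonic case is identical except for the extra contraction summand $y_{k_1+k_2,\zeta_1\zeta_2}(w_1 \ast w_2)$, which is symmetric under swapping the two factors precisely because $k_1+k_2 = k_2+k_1$ in $\Z_{>0}$ and $\zeta_1\zeta_2 = \zeta_2\zeta_1$ in the abelian group $\mu_N$.

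Associativity is the real content. For the shuffle product the cleanest route bypasses the recursion and exhibits the closed combinatorial description: for words $a_1\cdots a_p$ and $b_1\cdots b_q$, the product is the sum over all $(p,q)$-shuffles, i.e. over all interleavings of the two letter-blocks into $\{1,\dots,p+q\}$ preserving the internal order of each block. A short induction checks that this closed formula satisfies the recursive clauses and hence agrees with $\sh$; associativity is then manifest, since both $(w_1\sh w_2)\sh w_3$ and $w_1\sh(w_2\sh w_3)$ compute the single sum over all interleavings of the three words respecting internal orders. For the harmonic product I would invoke the quasi-shuffle formalism of Hoffmann \cite{Hof97}: setting $A = \Z_{>0}\times\mu_N$ with the auxiliary operation $(k_1,\zeta_1)\diamond(k_2,\zeta_2) = (k_1+k_2,\zeta_1\zeta_2)$, the stuffle recursion is exactly the quasi-shuffle product associated to $\diamond$ on $\Q\langle A\rangle$. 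Since $\diamond$ is commutative and associative—again by commutativity and associativity of $+$ on $\Z_{>0}$ and of multiplication on the abelian group $\mu_N$—the general quasi-shuffle theorem yields that $\ast$ is a commutative associative product with unit $1$.

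The main obstacle is associativity of the harmonic product. Unlike the shuffle case, the contraction term prevents a naive interleaving count from closing up, and a direct induction must track how a pair of merged letters in one association matches a pair in the other. The quasi-shuffle reformulation sidesteps this bookkeeping, at the cost of verifying that $\diamond$ endows $A$ with a commutative semigroup structure; that verification is where all the genuine (if elementary) content sits, and it amounts precisely to the facts that $\mu_N$ is abelian and that exponents add.
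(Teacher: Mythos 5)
Your proposal is correct, and it is worth noting that the paper itself offers no proof of this lemma at all: it is stated with the citations \cite{AK04, Zha10} and the verification is deferred entirely to the literature. Your argument is therefore not so much a different route as the standard self-contained proof that those references (and the quasi-shuffle formalism going back to Hoffman) supply: well-definedness and commutativity by induction on total word length, shuffle associativity via the closed description as a sum over order-preserving interleavings, and harmonic associativity by recognizing the stuffle recursion as the quasi-shuffle product attached to the commutative semigroup $(\Z_{>0}\times\mu_N,\diamond)$ with $(k_1,\zeta_1)\diamond(k_2,\zeta_2)=(k_1+k_2,\zeta_1\zeta_2)$. All the steps check out: the recursion strictly decreases total length, the interleaving bijection makes shuffle associativity manifest, and the hypotheses of the quasi-shuffle theorem (commutativity and associativity of $\diamond$, with the weight grading locally finite) are satisfied. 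You correctly identify that the only genuine content is the semigroup verification for $\diamond$; alternatively, if you wanted to avoid invoking the quasi-shuffle theorem as a black box, a direct induction on the sum of the three word lengths also proves harmonic associativity, with the case analysis tracking which of the leading letters merge --- but your reduction is cleaner and matches how the cited sources organize the material.
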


These algebraic structures provide a framework for formulating the shuffle and harmonic product formulas 
for $N$-MPVs, as elaborated in \cite{Hof97}.

\subsection{Shuffle and harmonic Hopf algebras}
Denote by $\Q\langle\langle X\rangle\rangle$ the free noncommutative series $\Q$-algebra with unit over $X$ and let $\Q\langle\langle Y\rangle\rangle$ be the free noncommutative series $\Q$-algebra with unit over $Y$. \newline
The $\Q$-algebra $\Q\langle\langle Y\rangle\rangle$ can be seen as a subalgebra of $\Q\langle\langle X\rangle\rangle$ thanks to the injective algebra morphism $\Q\langle\langle Y\rangle\rangle \hookrightarrow \Q\langle\langle X\rangle\rangle$ given by $y_{k, \zeta} \mapsto x_0^{k-1} x_\zeta$, for any $(k, \zeta) \in \Z_{>0} \times \mu_N$.
Recall the direct sum decomposition (of $\Q$-linear subspaces) (see (\cite[§2.2.5]{Rac02})
\[
    \Q\langle\langle X\rangle\rangle = \Q\langle\langle Y\rangle\rangle \oplus \Q\langle\langle X\rangle\rangle x_0.
\]
Let then $\pi_Y : \Q\langle\langle X\rangle\rangle = \Q\langle\langle Y\rangle\rangle \oplus \Q\langle\langle X\rangle\rangle x_0 \twoheadrightarrow \Q\langle\langle Y\rangle\rangle$ be the projection from $\Q\langle\langle X\rangle\rangle$ to $\Q\langle\langle Y\rangle\rangle$, that is, the surjective $\Q$-module morphism such that it is the identity on $\Q\langle\langle Y\rangle\rangle$ and maps any element of $\Q\langle\langle X\rangle\rangle x_0$ to $0$.\newline
Let $\mathbf{p}$ be the $\Q$-linear automorphism of $\Q\langle\langle X\rangle\rangle$ given by (\cite[§2.2.7]{Rac02})
\[
    \mathbf{p}(x_0^{k_1-1}x_{\zeta_1}x_0^{k_2-1}x_{\zeta_2} \cdots x_0^{k_r-1}x_{\zeta_r} x_0^{k_{r+1}-1}) = x_0^{k_1-1}x_{\zeta_1}x_0^{k_2-1}x_{\zeta_1\zeta_2} \cdots x_0^{k_r-1}x_{\zeta_1 \cdots \zeta_r}x_0^{k_{r+1}-1},
\]
for $r \in \Z_{\geq 0}$, $k_1, \dots, k_{r+1} \in \Z_{>0}$ and $\zeta_1, \dots, \zeta_r \in \mu_N$. \newline
Denote by $\mathcal{L}$ either $X$ or $Y$. Define the pairing
\[
    \Q\langle\langle\mathcal{L}\rangle\rangle \otimes \Q\langle\mathcal{L}\rangle \to \Q, \, \psi \otimes w \mapsto (\psi \mid w),
\]
where $(\psi \mid w)$ denotes the coefficient of the word $w$ in $\psi$ (and extend by linearity). Using this pairing, one obtains the following result:
\begin{lem}
    \begin{enumerate}[label=(\alph*), leftmargin=*]
        \item The $\Q$-algebra $\Q\langle\langle X\rangle\rangle$ is equipped with a Hopf algebra structure with respect to the \emph{shuffle coproduct}, which is the algebra morphism $\widehat{\Delta}_\sh : \Q\langle\langle X\rangle\rangle \to \Q\langle\langle X\rangle\rangle^{\hat{\otimes} 2}$ given by
        \[
            x_0 \mapsto x_0 \otimes 1 + 1 \otimes x_0 \text{ and } x_\zeta \mapsto x_\zeta \otimes 1 + 1 \otimes x_\zeta, \text{ for } \zeta \in \mu_N.  
        \]
        \item The $\Q$-algebra $\Q\langle\langle Y\rangle\rangle$ is equipped with a Hopf algebra structure with respect to the \emph{harmonic coproduct}, which is the algebra morphism $\widehat{\Delta}_\ast : \Q\langle\langle Y\rangle\rangle \to \Q\langle\langle Y\rangle\rangle^{\hat{\otimes} 2}$ given by
        \[
            y_{k, \zeta} \mapsto y_{k, \zeta} \otimes 1 + 1 \otimes y_{k, \zeta} + \sum_{\substack{k_1 + k_2 = k \\ \zeta_1 \zeta_2 = \zeta}} y_{k_1,\zeta_1} \otimes y_{k_2,\zeta_2}, \text{ for } (k, \zeta) \in \Z_{>0} \times \mu_N.  
        \]
    \end{enumerate}
\end{lem}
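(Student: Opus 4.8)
The plan is to treat the two assertions in parallel by exploiting the perfect graded pairing $(\,\cdot \mid \cdot\,)$ recalled just above the statement, which identifies $\Q\langle\langle\mathcal{L}\rangle\rangle$ (for $\mathcal{L}$ either $X$ or $Y$) with the graded dual of $\Q\langle\mathcal{L}\rangle$. Here the grading is by weight (the number of letters for $X$, and $\sum_i k_i$ for $Y$), which is locally finite since each graded piece is spanned by finitely many words; this is what makes the completion and the duality well behaved. First I would note that, by the universal property of the free algebra and continuity with respect to the weight filtration, a prescription on the generators extends uniquely to an algebra morphism of the completed (concatenation) algebra into $\Q\langle\langle\mathcal{L}\rangle\rangle^{\hat{\otimes} 2}$, so both $\widehat{\Delta}_\sh$ and $\widehat{\Delta}_\ast$ are well defined and multiplicative by construction. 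It then remains to verify the coalgebra axioms and the existence of an antipode, and the cleanest route is to transport these from the polynomial side.

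Next I would pin down the dual structures by a coefficient computation on generators. One checks directly that the concatenation product on $\Q\langle\langle\mathcal{L}\rangle\rangle$ is dual to the deconcatenation coproduct on $\Q\langle\mathcal{L}\rangle$, that $\widehat{\Delta}_\sh$ is dual to the shuffle product $\sh$, and that $\widehat{\Delta}_\ast$ is dual to the harmonic product $\ast$ of the previous lemma. For instance $(\widehat{\Delta}_\sh(x_0)\mid u\otimes v)=(x_0\mid u\,\sh\,v)$ and $(\widehat{\Delta}_\ast(y_{k,\zeta})\mid u\otimes v)=(y_{k,\zeta}\mid u\ast v)$ reproduce exactly the displayed formulas, the merging summand $y_{k_1+k_2,\zeta_1\zeta_2}$ of $\ast$ accounting precisely for the last term of $\widehat{\Delta}_\ast$. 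Thus the two coproducts in the statement are forced to be the graded duals of $(\Q\langle X\rangle,\sh)$ and $(\Q\langle Y\rangle,\ast)$, and the counits are dual to the units, i.e. $\varepsilon(\psi)=(\psi\mid 1)$ picks out the coefficient of the empty word.

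I would then invoke the classical fact that $(\Q\langle X\rangle,\sh)$ with deconcatenation is the shuffle Hopf algebra and $(\Q\langle Y\rangle,\ast)$ with deconcatenation is Hoffman's quasi-shuffle Hopf algebra; both are commutative, graded and connected. Since the pairing is perfect, weight-graded and has finite-dimensional graded pieces, the graded dual of a graded connected Hopf algebra is again one, with product and coproduct swapped, so transporting the structure yields exactly $(\Q\langle\langle\mathcal{L}\rangle\rangle,\text{concatenation},\widehat{\Delta}_{\bullet})$ as a Hopf algebra. If instead one prefers a direct verification, coassociativity and the counit law are checked on generators and propagated by the algebra-morphism property: in the shuffle case every letter is primitive, so both iterations of $\widehat{\Delta}_\sh$ send a letter to the same fully symmetric primitive tensor; in the harmonic case both iterations send $y_{k,\zeta}$ to $\sum y_{k_1,\zeta_1}\otimes y_{k_2,\zeta_2}\otimes y_{k_3,\zeta_3}$ summed over $k_1+k_2+k_3=k$ and $\zeta_1\zeta_2\zeta_3=\zeta$, together with the degenerate lower-order terms, the coincidence being exactly the associativity of addition of weights and of multiplication in $\mu_N$. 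Connectedness with respect to the weight grading then furnishes the antipode automatically via the standard recursion.

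I expect the main obstacle to be bookkeeping rather than conceptual: one must take care that $\widehat{\Delta}_\bullet$ genuinely lands in the \emph{completed} tensor product and is continuous, so that identities verified on generators extend to arbitrary series, and in the harmonic case the combinatorics of the merging indices must be tracked carefully to confirm the symmetry of the two triple coproducts. Both points are routine once the weight grading is seen to be locally finite, which is why I would foreground the graded-duality formulation: it reduces the whole statement to the classical (quasi-)shuffle Hopf algebra theory, here merely decorated by the group $\mu_N$ and completed.
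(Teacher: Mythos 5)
Your proposal is correct and takes essentially the same route as the paper: the paper's entire proof is the duality identity $\widehat{\Delta}_{\bullet}(\psi) = \sum_{u,v} (\psi \mid u \bullet v)\, u \otimes v$ (with details delegated to \cite[Lemma A.3]{BY24}), which is precisely your observation that $\widehat{\Delta}_\sh$ and $\widehat{\Delta}_\ast$ are the duals of the shuffle and harmonic products under the weight-graded pairing. Your extra material --- local finiteness of the weight grading, transport of the classical (quasi-)shuffle Hopf structure, continuity into the completed tensor square, and the antipode via connectedness --- simply makes explicit what the paper outsources to the cited reference.
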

\begin{proof}
    Setting $\bullet$ to be either $\sh$ or $\ast$, this follows from the identity (for explicit proof the reader may refer to \cite[Lemma A.3]{BY24})
    \begin{align*}
        \widehat{\Delta}_{\bullet}(\psi) = \sum_{u,v \text{ words in } \mathcal{L}} (\psi \mid u \bullet v) u \otimes v,
    \end{align*}
    for any $\psi \in \Q\langle\langle\mathcal{L}\rangle\rangle$.
\end{proof}

\subsection{Racinet's cyclotomic double shuffle Lie algebra \texorpdfstring{$\dmr_0^{\mu_N}$}{dmr0muN}}
For $\psi \in \Q\langle\langle X\rangle\rangle$, let $d_{\psi}$ be the derivation of $\Q\langle\langle X\rangle\rangle$ given by \cite[§3.1.12.2]{Rac02}
\begin{align}\label{eq:derivation d}
        x_0 \mapsto 0, \text{ and } \, x_\zeta \mapsto [x_\zeta, t_\zeta(\psi)] \text{ for } \zeta \in \mu_N,
\end{align}
where $t_\zeta$ is the algebra automorphism of $\Q\langle\langle X\rangle\rangle$ given by
\begin{align}\label{eq: auto t}
        x_0 \mapsto x_0, \text{ and } \, x_{\eta} \mapsto x_{\zeta\eta}, \text{ for } \eta \in \mu_N.
\end{align}
We then define a Lie bracket on $\Q\langle\langle X\rangle\rangle$ as follows \cite[§3.1.10.2]{Rac02}:
\begin{equation}\label{eq: bracket for dmr}
    \langle \psi_1, \psi_2 \rangle := d_{\psi_1}(\psi_2) - d_{\psi_2}(\psi_1) + [\psi_1, \psi_2],
\end{equation}
for any $\psi_1, \psi_2 \in \Q\langle\langle X\rangle\rangle$.
\begin{defn}[{\cite[Definitions 3.3.1 and 3.3.8]{Rac02}}]
    \label{dmrmuN}
    We define $\dmr_0^{\mu_N}$ to be the set of elements $\psi \in \Q\langle\langle X\rangle\rangle$ such that
    \begin{enumerate}[label=(\roman*), leftmargin=*, itemsep=2mm]
        \begin{multicols}{2}
        \item \label{dmrmuN_i} $(\psi \mid x_0) = (\psi \mid x_1) = 0$;
        \item \label{dmrmuN_ii} $\widehat{\Delta}_\sh(\psi) = \psi \otimes 1 + 1 \otimes \psi$;
        \item\label{dmrmuN_iii}\footnote{In \cite{Rac02}, this condition is given as $\left(\psi_\ast \mid x_0^{k-1}x_\zeta\right) = (-1)^{k-1} \left(\psi_\ast \mid x_0^{k-1}x_{\zeta^{-1}}\right)$ for $(k, \zeta) \in \Z_{>0} \times \mu_N$, but thanks to \cite[Propositions 3.3.3 and 3.3.7]{Rac02}, it is enough to have \ref{dmrmuN_iii} for $k=1$ and any $\zeta \in \mu_N$ since this identity is always true for all the other cases. By definition of $\psi_\ast$ this is equivalent to the stated identity.} $\left(\psi \mid x_\zeta - x_{\zeta^{-1}}\right) = 0$ for any $\zeta \in \mu_N$;
        \item \label{dmrmuN_iv} $\widehat{\Delta}_\ast(\psi_\ast) = \psi_\ast \otimes 1 + 1 \otimes \psi_\ast$;
        \end{multicols}
    \end{enumerate}
    where
    \[
        \psi_\ast = \pi_Y \circ \mathbf{p}^{-1}(\psi) + \sum_{n \geq 2} \frac{(-1)^{n-1}}{n} (\psi \mid x_0^{n-1} x_1) y_{1,1}^n \in \Q\langle\langle Y\rangle\rangle.
    \]
\end{defn}

\begin{prop}[{\cite[Proposition 4.A.i)]{Rac02}}]
    The pair $(\dmr_0^{\mu_N}, \langle \cdot, \cdot \rangle)$ is a Lie subalgebra of $(\Q\langle\langle X\rangle\rangle, \langle \cdot, \cdot \rangle)$. 
\end{prop}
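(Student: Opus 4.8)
The plan is to verify the two defining properties of a Lie subalgebra separately: that $\dmr_0^{\mu_N}$ is a $\Q$-linear subspace, and that it is stable under the bracket $\langle\cdot,\cdot\rangle$ (which is already a Lie bracket on the ambient $\Q\langle\langle X\rangle\rangle$). The subspace claim is immediate: each of conditions \ref{dmrmuN_i}--\ref{dmrmuN_iv} is $\Q$-linear in $\psi$, the only point to note being that $\psi \mapsto \psi_\ast$ is a $\Q$-linear map, so \ref{dmrmuN_iv} also cuts out a linear subspace. The substance is therefore closure: given $\psi_1, \psi_2 \in \dmr_0^{\mu_N}$, I would check that $\langle\psi_1,\psi_2\rangle$ again satisfies \ref{dmrmuN_i}--\ref{dmrmuN_iv}, condition by condition.

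For conditions \ref{dmrmuN_i} and \ref{dmrmuN_iii} the key is a grading argument. Condition \ref{dmrmuN_ii} forces $\psi_1, \psi_2$ to have no constant term, so each has lowest weight $\geq 1$ (weighting every letter of $X$ by $1$). Then $[\psi_1,\psi_2]$ has lowest weight $\geq 2$; and since $d_\psi$ annihilates $x_0$ and sends each $x_\zeta$ to the element $[x_\zeta, t_\zeta(\psi)]$ of lowest weight $\geq 2$, the derivation $d_\psi$ strictly raises the lowest weight, so $d_{\psi_1}(\psi_2)$ and $d_{\psi_2}(\psi_1)$ have lowest weight $\geq 2$ as well. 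Hence $\langle\psi_1,\psi_2\rangle$ carries no weight-one component, so its coefficients on $x_0$, $x_1$ and on $x_\zeta - x_{\zeta^{-1}}$ all vanish, and conditions \ref{dmrmuN_i} and the reduced form of \ref{dmrmuN_iii} (which, per the footnote, concerns only the weight-one coefficients) hold automatically.

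For condition \ref{dmrmuN_ii} I would show that the set of $\widehat{\Delta}_\sh$-primitive elements is stable under $\langle\cdot,\cdot\rangle$. The commutator of two primitives is primitive in any bialgebra, so only the terms $d_{\psi_i}(\psi_j)$ remain. I claim that whenever $\psi$ is $\widehat{\Delta}_\sh$-primitive, $d_\psi$ is a coderivation for $\widehat{\Delta}_\sh$, namely $\widehat{\Delta}_\sh \circ d_\psi = (d_\psi \otimes \mathrm{id} + \mathrm{id} \otimes d_\psi)\circ \widehat{\Delta}_\sh$. Since both maps are derivations from $\Q\langle\langle X\rangle\rangle$ into $\Q\langle\langle X\rangle\rangle^{\hat{\otimes} 2}$ (a bimodule via $\widehat{\Delta}_\sh$), it suffices to check the identity on the generators $x_0$ (both sides vanish) and $x_\zeta$. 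There one uses that $t_\zeta$ preserves primitivity, because $t_\zeta(x_0)=x_0$ and $t_\zeta(x_\eta)=x_{\zeta\eta}$ are primitive, so $t_\zeta(\psi)$ is primitive; the cross-terms $[x_\zeta\otimes 1,\, 1\otimes t_\zeta(\psi)]$ and $[1\otimes x_\zeta,\, t_\zeta(\psi)\otimes 1]$ cancel, leaving exactly $[x_\zeta,t_\zeta(\psi)]\otimes 1 + 1\otimes[x_\zeta,t_\zeta(\psi)]$. Granting this, if $\psi_1,\psi_2$ are primitive then $\widehat{\Delta}_\sh(d_{\psi_1}(\psi_2)) = d_{\psi_1}(\psi_2)\otimes 1 + 1 \otimes d_{\psi_1}(\psi_2)$, so every summand of $\langle\psi_1,\psi_2\rangle$ is primitive and \ref{dmrmuN_ii} follows.

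The remaining condition \ref{dmrmuN_iv} is where I expect the real difficulty. I would transport the problem to the harmonic side through $\psi \mapsto \psi_\ast = \pi_Y\circ\mathbf{p}^{-1}(\psi) + \sum_{n\geq 2}\frac{(-1)^{n-1}}{n}(\psi \mid x_0^{n-1}x_1)\,y_{1,1}^n$ and aim to show $\langle\psi_1,\psi_2\rangle_\ast$ is $\widehat{\Delta}_\ast$-primitive, mirroring the shuffle case: establish that, for $\psi$ satisfying \ref{dmrmuN_i}--\ref{dmrmuN_iii}, the operator induced on $\Q\langle\langle Y\rangle\rangle$ by $d_\psi$ through $(\cdot)_\ast$ is a coderivation for $\widehat{\Delta}_\ast$, and that $(\cdot)_\ast$ intertwines $\langle\cdot,\cdot\rangle$ with the resulting harmonic bracket, so that harmonic primitivity is preserved. \textbf{The main obstacle} is precisely the non-formal nature of $(\cdot)_\ast$: unlike the shuffle side it is neither an algebra nor a coalgebra morphism, it mixes in the cumulative-index automorphism $\mathbf{p}$ and the projection $\pi_Y$ onto convergent words, and it carries the divergent-word regularization term. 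Controlling how this correction behaves under the bracket — showing it either cancels or contributes only $\widehat{\Delta}_\ast$-primitive terms — is the crux, and it is exactly here that conditions \ref{dmrmuN_i} and \ref{dmrmuN_iii} on the low-weight coefficients are expected to enter. An alternative I would keep in reserve is to prove the corresponding group-level statement first — that the group-like solutions of the double shuffle equations form a group — and then recover \ref{dmrmuN_iv} by linearizing, which sidesteps the most delicate term-by-term bookkeeping on the harmonic coproduct.
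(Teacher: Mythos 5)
First, note what you are being compared against: the paper offers no proof of this Proposition at all --- it is quoted verbatim from Racinet as \cite[Proposition 4.A.i)]{Rac02}, precisely because its proof is a major theorem rather than a verification. Within that context, the parts of your argument that you actually carry out are correct. The subspace claim is indeed immediate from linearity of $\psi \mapsto \psi_\ast$. Your grading argument for closure of conditions \ref{dmrmuN_i} and \ref{dmrmuN_iii} is sound: $\widehat{\Delta}_\sh$-primitivity kills the constant term, $d_{\psi_1}$ raises the lowest weight (since $d_{\psi_1}(x_0)=0$ and $[x_\zeta, t_\zeta(\psi_1)]$ has weight $\geq 2$), so $\langle\psi_1,\psi_2\rangle$ has no weight-one component and the coefficients of $x_0$, $x_1$ and $x_\zeta - x_{\zeta^{-1}}$ all vanish. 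Your coderivation argument for condition \ref{dmrmuN_ii} is also correct: $t_\zeta$ permutes the primitive generators, the cross-terms $[x_\zeta\otimes 1, 1\otimes t_\zeta(\psi)]$ and $[1\otimes x_\zeta, t_\zeta(\psi)\otimes 1]$ vanish, and the generator check suffices since both sides are $\widehat{\Delta}_\sh$-derivations.

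The genuine gap is condition \ref{dmrmuN_iv}, which you do not prove; you only describe a strategy and, to your credit, flag exactly where it gets hard. But closure of \ref{dmrmuN_iv} under $\langle\cdot,\cdot\rangle$ \emph{is} the proposition: conditions \ref{dmrmuN_i}--\ref{dmrmuN_iii} are formalities, and the entire content of Racinet's theorem is that the Ihara bracket of two elements whose images $\psi_\ast$ are $\widehat{\Delta}_\ast$-primitive again has $\widehat{\Delta}_\ast$-primitive image --- despite $\psi\mapsto\psi_\ast$ being neither an algebra nor a coalgebra map and carrying the regularization term. The route you sketch (show that $d_\psi$ induces, through $(\cdot)_\ast$, a coderivation of $\widehat{\Delta}_\ast$, with conditions \ref{dmrmuN_i} and \ref{dmrmuN_iii} entering to control the correction terms) is essentially what Racinet establishes, but carrying it out is the bulk of \cite{Rac02}; it is not a step one can leave as an expectation. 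Your fallback --- prove the group-level stability first and linearize --- is likewise not a shortcut: that group statement is Racinet's main theorem, of the same depth as the Lie statement, and the known conceptual proofs (e.g.\ via stabilizer interpretations in later work of Enriquez and the first author) are themselves substantial. As it stands, your proposal proves the easy two thirds of the statement and reduces the remaining, essential third to a plan, so it cannot be accepted as a proof of the Proposition.
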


\section{Algebraic framework related to \texorpdfstring{$N$}{N}-CMZVs}
Paralleling the approach in the previous section, we first recall from \cite{K24} the algebras in which one expresses shuffle and harmonic relations between $N$-CMZVs. Then we introduce here their dual counterparts which are Hopf algebras in which we formulate a congruent double shuffle Lie algebra $\dmr_0^{[N]}$.

\subsection{Shuffle and harmonic algebras}
Denote by $\Q\langle\widetilde{X}\rangle$ the free noncommutative polynomial \linebreak $\Q$-algebra with unit over the alphabet $\widetilde{X} := \{\tilde{x}, \tilde{x}_\alpha \mid \alpha \in \Z / N \Z \}$. \newline
Consider the set $\widetilde{Y} := \{\tilde{y}_{k, \alpha} \mid (k, \alpha) \in \Z_{>0} \times \Z / N \Z \}$ and denote by $\Q\langle\widetilde{Y}\rangle$ the free noncommutative polynomial $\Q$-algebra with unit over $\widetilde{Y}$.
The $\Q$-algebra $\Q\langle\widetilde{Y}\rangle$ can be seen as a subalgebra of $\Q\langle\widetilde{X}\rangle$ thanks to the injective algebra morphism $\Q\langle\widetilde{Y}\rangle \hookrightarrow \Q\langle\widetilde{X}\rangle$ given by $\tilde{y}_{k, \alpha} \mapsto \tilde{x}^{k-1} \tilde{x}_\alpha$, for $(k, \alpha) \in \Z_{>0} \times \Z / N \Z$.

\begin{lem}[{\cite[p. 5 and 6]{K24}}]
    \begin{enumerate}[label=(\alph*), leftmargin=*]
        \item The $\Q$-linear space $\Q\langle\widetilde{X}\rangle$ is equipped with a commutative $\Q$-algebra structure with respect to the \emph{shuffle product} given inductively by
        \[
            1 \ \tilde{\sh} \ \tilde{w} = \tilde{w} \ \tilde{\sh} \ 1 = \tilde{w},
        \]
        for a word $\tilde{w}$ in $\Q\langle\widetilde{X}\rangle$; and
        \[
            \tilde{u} \tilde{w}_{1} \ \tilde{\sh} \ \tilde{v} \tilde{w}_{2} = \tilde{u}(\tilde{w}_{1} \ \tilde{\sh} \ \tilde{v} \tilde{w}_{2}) + \tilde{v}(\tilde{u} \tilde{w}_{1} \ \tilde{\sh} \ \tilde{w}_{2}),
        \]
        for words $\tilde{w}_1, \tilde{w}_2$ in $\Q\langle\widetilde{X}\rangle$ and $\tilde{u}, \tilde{v} \in \widetilde{X}$, and extending by $\mathbb{Q}$-bilinearity.
        \item The $\Q$-linear space $\Q\langle\widetilde{Y}\rangle$ is equipped with a commutative $\Q$-algebra structure with respect to the \emph{harmonic product} given inductively by
        \[
            1 \ \tilde{\ast} \ \tilde{w} = \tilde{w} \ \tilde{\ast} \ 1 = \tilde{w},
        \]
        for a word $\tilde{w}$ in $\Q\langle\widetilde{Y}\rangle$, and
        \begin{align*}
            &\tilde{y}_{k_1, \alpha_1} w_{1} \ \tilde{\ast} \ \tilde{y}_{k_2, \alpha_2} w_{2} \\
            & =
            \begin{cases}
            \tilde{y}_{k_1, \alpha_1}(\tilde{w}_{1} \tilde{\ast} \tilde{y}_{k_2, \alpha_2} \tilde{w}_{2}) + \tilde{y}_{k_2, \alpha_2} (\tilde{y}_{k_1, \alpha_1} \tilde{w}_{1} \tilde{\ast} \tilde{w}_{2}) & \text{ if } \alpha_1 \neq \alpha_2 \\
            \tilde{y}_{k_1, \alpha}(\tilde{w}_{1} \tilde{\ast} \tilde{y}_{k_2, \alpha} \tilde{w}_{2}) + \tilde{y}_{k_2, \alpha} (\tilde{y}_{k_1, \alpha} \tilde{w}_{1} \tilde{\ast} \tilde{w}_{2}) + \tilde{y}_{k_1+k_2, \alpha} (\tilde{w}_{1} \tilde{\ast} \tilde{w}_{2}) & \text{ if } \alpha=\alpha_1=\alpha_2
            \end{cases}
        \end{align*}
        for words $\tilde{w}_1, \tilde{w}_2$ in $\Q\langle\widetilde{Y}\rangle$, $(k_1 ,\alpha_1), (k_2 ,\alpha_2) \in \Z_{>0} \times \Z / N \Z$; and extending by $\Q$-bilinearity. 
    \end{enumerate}
\end{lem}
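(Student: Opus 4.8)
The plan is to recognize both products as instances of the shuffle/quasi-shuffle formalism and to verify the commutative $\Q$-algebra axioms---well-definedness, the unit law, commutativity, and associativity---where only associativity requires genuine work. In both cases the recursive definition together with $\Q$-bilinearity determines the product uniquely by induction on the total length $\ell(\tilde{w}_1)+\ell(\tilde{w}_2)$ of the two arguments, so well-definedness and the fact that $1$ is a two-sided unit are immediate. Commutativity likewise follows by a short induction on total length: the recursion for $\tilde{w}_1 \bullet \tilde{w}_2$ is symmetric under swapping the two arguments (in the harmonic case the contraction term $\tilde{y}_{k_1+k_2,\alpha}$ is manifestly symmetric in the two letters), and the inductive hypothesis disposes of the remaining shorter products.

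For part (a), the product $\tilde{\sh}$ has exactly the same recursive shape as the classical shuffle product on $\Q\langle X\rangle$ recalled in the previous section. I would therefore prove associativity by the standard double induction on total length: writing $\tilde{w}_i = \tilde{u}_i \tilde{w}_i'$, one expands both $(\tilde{w}_1 \,\tilde{\sh}\, \tilde{w}_2)\,\tilde{\sh}\, \tilde{w}_3$ and $\tilde{w}_1 \,\tilde{\sh}\,(\tilde{w}_2 \,\tilde{\sh}\, \tilde{w}_3)$ via the defining recursion, peels off the leading letters, and matches the resulting terms pairwise using the inductive hypothesis. This is verbatim the computation establishing associativity of $\sh$, so nothing new is needed.

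For part (b), the cleanest route is to present $\tilde{\ast}$ as a quasi-shuffle product in the sense of Hoffman \cite{Hof97}. Let $V := \bigoplus_{(k,\alpha)} \Q\,\tilde{y}_{k,\alpha}$ be the $\Q$-span of the alphabet $\widetilde{Y}$ and define a $\Q$-bilinear ``diamond'' product on $V$ by
\[
    \tilde{y}_{k_1,\alpha_1}\diamond \tilde{y}_{k_2,\alpha_2} = \begin{cases} \tilde{y}_{k_1+k_2,\alpha_1} & \text{if } \alpha_1=\alpha_2,\\ 0 & \text{otherwise.}\end{cases}
\]
Then the recursion defining $\tilde{\ast}$ is precisely the quasi-shuffle recursion
\[
    \tilde{u}\,\tilde{w}_1 \,\tilde{\ast}\, \tilde{v}\,\tilde{w}_2 = \tilde{u}\,(\tilde{w}_1 \,\tilde{\ast}\, \tilde{v}\,\tilde{w}_2) + \tilde{v}\,(\tilde{u}\,\tilde{w}_1\,\tilde{\ast}\, \tilde{w}_2) + (\tilde{u}\diamond \tilde{v})\,(\tilde{w}_1 \,\tilde{\ast}\, \tilde{w}_2),
\]
whose last term contributes only when $\alpha_1=\alpha_2$, reproducing the displayed case distinction. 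It then suffices to check that $\diamond$ is commutative and associative on $V$: commutativity is clear, and for associativity one computes that both $(\tilde{y}_{k_1,\alpha_1}\diamond \tilde{y}_{k_2,\alpha_2})\diamond \tilde{y}_{k_3,\alpha_3}$ and $\tilde{y}_{k_1,\alpha_1}\diamond(\tilde{y}_{k_2,\alpha_2}\diamond \tilde{y}_{k_3,\alpha_3})$ equal $\tilde{y}_{k_1+k_2+k_3,\alpha_1}$ when $\alpha_1=\alpha_2=\alpha_3$ and vanish otherwise. Granting this, Hoffman's theorem yields that the associated quasi-shuffle product is commutative and associative with unit $1$, which is exactly the claim.

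Finally, I expect associativity of $\tilde{\ast}$ to be the main obstacle, since a direct inductive proof must track how the contraction terms combine across a three-fold product, and the case distinction multiplies the number of subcases. The virtue of the quasi-shuffle encoding is that it compresses all of this bookkeeping into the single associativity check for $\diamond$ at the two-letter level carried out above; once that is in place the standard quasi-shuffle machinery (or, for a self-contained argument, the same double induction on total length as in part (a), now carrying the extra contraction term) completes the proof.
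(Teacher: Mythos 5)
Your proof is correct, but there is nothing in the paper to compare it against: the paper states this lemma as a quotation from Kanno \cite{K24} (pages 5 and 6) and gives no proof, just as it cites \cite{AK04, Zha10} for the parallel lemma on $(\Q\langle X\rangle, \sh)$ and $(\Q\langle Y\rangle, \ast)$. So your argument supplies a self-contained proof where the paper relies on a citation. The substance is right and is the standard route: part (a) is the classical shuffle induction (equivalently, the quasi-shuffle construction with zero diamond product), and for part (b) the key observation is that the displayed case distinction is exactly the quasi-shuffle recursion attached to the bilinear product $\tilde{y}_{k_1,\alpha_1}\diamond\tilde{y}_{k_2,\alpha_2}=\tilde{y}_{k_1+k_2,\alpha_1}$ if $\alpha_1=\alpha_2$ and $0$ otherwise. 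Your two-letter verification that $\diamond$ is commutative and associative (both triple products equal $\tilde{y}_{k_1+k_2+k_3,\alpha_1}$ when $\alpha_1=\alpha_2=\alpha_3$ and vanish otherwise, including the mixed cases where only two classes agree) is precisely the hypothesis of Hoffman's quasi-shuffle theorem; note that $\diamond$ is a nonunital commutative associative product that is additive on the weight grading $\deg\tilde{y}_{k,\alpha}=k$, so the construction sits squarely in that framework. Two minor caveats: the paper's reference \cite{Hof97} treats the level-one harmonic algebra, whereas the general quasi-shuffle theorem you invoke is Hoffman's later work on quasi-shuffle products, so the citation should be adjusted if you want to lean on the machinery; and if you prefer to avoid it altogether, your fallback double induction on total length, carrying the contraction term through the threefold product, completes the argument with no external input.
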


These algebraic structures provide a framework for formulating the shuffle and  harmonic product formulas 
for $N$-CMZVs, as elaborated in \cite{K24}.

\subsection{Shuffle and harmonic Hopf algebras}
Denote by $\Q\langle\langle\widetilde{X}\rangle\rangle$ the free noncommutative series $\Q$-algebra with unit over $\widetilde{X}$.
Denote by $\Q\langle\langle\widetilde{Y}\rangle\rangle$ the free noncommutative series $\Q$-algebra with unit over $\widetilde{Y}$. \newline
The $\Q$-algebra $\Q\langle\langle\widetilde{Y}\rangle\rangle$ can be seen as a subalgebra of $\Q\langle\langle\widetilde{X}\rangle\rangle$ thanks to the injective algebra morphism $\Q\langle\langle\widetilde{Y}\rangle\rangle \hookrightarrow \Q\langle\langle\widetilde{X}\rangle\rangle$ given by $\tilde{y}_{k, \alpha} \mapsto \tilde{x}^{k-1} \tilde{x}_\alpha$, for $(k, \alpha) \in \Z_{>0} \times \Z / N \Z$.
One checks that we have the direct sum decomposition (of $\Q$-submodules)
\[
    \Q\langle\langle\widetilde{X}\rangle\rangle = \Q\langle\langle \widetilde{Y}\rangle\rangle \oplus \Q\langle\langle\widetilde{X}\rangle\rangle \tilde{x}.
\]
Let then $\pi_{\widetilde{Y}} : \Q\langle\langle\widetilde{X}\rangle\rangle = \Q\langle\langle\widetilde{Y}\rangle\rangle \oplus \Q\langle\langle\widetilde{X}\rangle\rangle \tilde{x} \twoheadrightarrow \Q\langle\langle\widetilde{Y}\rangle\rangle$ be the projection from $\Q\langle\langle\widetilde{X}\rangle\rangle$ to $\Q\langle\langle\widetilde{Y}\rangle\rangle$, that is, the surjective $\Q$-module morphism such that it is the identity on $\Q\langle\langle\widetilde{Y}\rangle\rangle$ and maps any element of $\Q\langle\langle\widetilde{X}\rangle\rangle \tilde{x}$ to $0$.\newline
Let $\widetilde{\mathbf{q}}$ be the $\Q$-linear automorphism of $\Q\langle\langle\widetilde{X}\rangle\rangle$ given by
\[
    \mbox{\small $\widetilde{\mathbf{q}}(\tilde{x}^{k_1-1}\tilde{x}_{\alpha_1} \cdots \tilde{x}^{k_{r-1}-1}\tilde{x}_{\alpha_{r-1}} \tilde{x}^{k_r-1}\tilde{x}_{\alpha_r} \tilde{x}^{k_{r+1}-1}) = \tilde{x}^{k_1-1}\tilde{x}_{\alpha_1 - \alpha_2} \cdots \tilde{x}^{k_{r-1}-1}\tilde{x}_{\alpha_{r-1} - \alpha_r} \tilde{x}^{k_r-1}\tilde{x}_{\alpha_r} \tilde{x}^{k_{r+1}-1}$},
\]
for $r \in \Z_{\geq 0}$, $k_1, \dots, k_{r+1} \in \Z_{>0}$ and $\alpha_1, \dots, \alpha_r \in \Z / N \Z$.\newline
Denote by $\widetilde{\mathcal{L}}$ either $\widetilde{X}$ or $\widetilde{Y}$. Define the pairing
\[
    \Q\langle\langle\widetilde{\mathcal{L}}\rangle\rangle \otimes \Q\langle\widetilde{\mathcal{L}}\rangle \to \Q, \, \widetilde{\psi} \otimes \tilde{w} \mapsto (\widetilde{\psi} \mid \tilde{w}),
\]
where $(\widetilde{\psi} \mid \tilde{w})$ denotes the coefficient of the word $\tilde{w}$ in $\widetilde{\psi}$ (and extend by linearity). Using this pairing, one obtains the following result:
\begin{lem}
    \begin{enumerate}[label=(\alph*), leftmargin=*]
        \item The $\Q$-algebra $\Q\langle\langle\widetilde{X}\rangle\rangle$ is equipped with a Hopf algebra structure with respect to the \emph{shuffle coproduct}, which is the algebra morphism $\widehat{\Delta}_{\tilde{\sh}} : \Q\langle\langle\widetilde{X}\rangle\rangle \to \Q\langle\langle\widetilde{X}\rangle\rangle^{\hat{\otimes} 2}$ given by
        \[
            \tilde{x} \mapsto \tilde{x} \otimes 1 + 1 \otimes \tilde{x} \text{ and } \tilde{x}_\alpha \mapsto \tilde{x}_\alpha \otimes 1 + 1 \otimes \tilde{x}_\alpha, \text{ for } \alpha \in \Z / N \Z.  
        \]
        \item The $\Q$-algebra $\Q\langle\langle\widetilde{Y}\rangle\rangle$ is equipped with a Hopf algebra structure with respect to the \emph{harmonic coproduct}, which is the algebra morphism $\widehat{\Delta}_{\tilde{\ast}} : \Q\langle\langle\widetilde{Y}\rangle\rangle \to \Q\langle\langle\widetilde{Y}\rangle\rangle^{\hat{\otimes} 2}$ given by
        \[
            \tilde{y}_{k,\alpha} \mapsto \tilde{y}_{k,\alpha} \otimes 1 + 1 \otimes \tilde{y}_{k,\alpha} + \sum_{k_1 + k_2 = k} \tilde{y}_{k_1,\alpha} \otimes \tilde{y}_{k_2,\alpha}, \text{ for } (k, \alpha) \in \Z_{>0} \times \Z / N \Z.  
        \]
    \end{enumerate}
\end{lem}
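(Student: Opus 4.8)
The plan is to follow exactly the strategy used for the analogous statement concerning $\Q\langle\langle X\rangle\rangle$ and $\Q\langle\langle Y\rangle\rangle$, exploiting the duality between the shuffle (resp. harmonic) product on the polynomial algebra and the proposed coproduct on the series algebra, as encoded by the pairing $(\widetilde{\psi} \mid \tilde{w})$. Writing $\tilde{\bullet}$ for either $\tilde{\sh}$ or $\tilde{\ast}$ and $\widetilde{\mathcal{L}}$ for the corresponding alphabet, the central claim I would establish is the coefficient identity
\[
    \widehat{\Delta}_{\tilde{\bullet}}(\widetilde{\psi}) = \sum_{\tilde{u},\tilde{v} \text{ words in } \widetilde{\mathcal{L}}} (\widetilde{\psi} \mid \tilde{u} \,\tilde{\bullet}\, \tilde{v})\, \tilde{u} \otimes \tilde{v}, \qquad \widetilde{\psi} \in \Q\langle\langle\widetilde{\mathcal{L}}\rangle\rangle.
\]
Once this identity is in hand, every Hopf-algebraic axiom for $\widehat{\Delta}_{\tilde{\bullet}}$ becomes the transpose of the corresponding algebra axiom for $\tilde{\bullet}$ already proved in the previous lemma: coassociativity is dual to associativity, the counit (extracting the coefficient of the empty word) is dual to the unit, cocommutativity is dual to commutativity, and compatibility of $\widehat{\Delta}_{\tilde{\bullet}}$ with concatenation is dual to the fact that $\tilde{\bullet}$ respects the grading by word length.

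First I would check that the assignments on generators genuinely define continuous algebra morphisms $\Q\langle\langle\widetilde{X}\rangle\rangle \to \Q\langle\langle\widetilde{X}\rangle\rangle^{\hat{\otimes} 2}$ and $\Q\langle\langle\widetilde{Y}\rangle\rangle \to \Q\langle\langle\widetilde{Y}\rangle\rangle^{\hat{\otimes} 2}$: since the series algebras are completed free associative algebras, any set-map on the alphabet extends uniquely to an algebra morphism, and because each generator is sent to an element lying in the augmentation ideal of the tensor square with finitely many terms in each word-length degree, the extension is automatically continuous for the topology defined by word length. I would then verify the displayed coefficient identity by induction on word length, reducing to the inductive defining relations of $\tilde{\sh}$ and $\tilde{\ast}$; for the shuffle case this is routine and essentially identical to the level-$\mu_N$ computation, while for the harmonic case I would track how the defining case-distinction ($\alpha_1 \neq \alpha_2$ versus $\alpha_1 = \alpha_2$) matches the shape of $\widehat{\Delta}_{\tilde{\ast}}$.

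The step I expect to require the most care is confirming that the diagonal term $\sum_{k_1+k_2=k}\tilde{y}_{k_1,\alpha}\otimes\tilde{y}_{k_2,\alpha}$ in $\widehat{\Delta}_{\tilde{\ast}}(\tilde{y}_{k,\alpha})$ is precisely dual to the stuffing contribution $\tilde{y}_{k_1+k_2,\alpha}$ that appears in $\tilde{y}_{k_1,\alpha}\,\tilde{\ast}\,\tilde{y}_{k_2,\alpha}$ only when the two colour indices coincide. Concretely, I would verify that $(\tilde{y}_{k_1,\alpha_1}\,\tilde{\ast}\,\tilde{y}_{k_2,\alpha_2} \mid \tilde{y}_{k,\alpha})$ equals the expected value exactly in the two orderings of the length-two part together with the single length-one stuffed term governed by the constraint $\alpha_1=\alpha_2=\alpha$, $k_1+k_2=k$, and that no extra contribution arises when $\alpha_1 \neq \alpha_2$. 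Since this colour constraint differs from the level-$\mu_N$ rule $\zeta_1\zeta_2=\zeta$, one cannot quote the earlier lemma verbatim; instead the argument of \cite[Lemma A.3]{BY24} must be re-run with the present harmonic product, after which the existence of the antipode follows formally from connectedness of the resulting graded bialgebra.
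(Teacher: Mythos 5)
Your proposal is correct and follows essentially the same route as the paper: the paper's proof consists precisely of invoking the coefficient identity $\widehat{\Delta}_{\tilde{\bullet}}(\widetilde{\psi}) = \sum_{\tilde{u},\tilde{v}} (\widetilde{\psi} \mid \tilde{u}\,\tilde{\bullet}\,\tilde{v})\, \tilde{u}\otimes\tilde{v}$ that you take as your central claim, delegating its explicit verification to \cite[Lemma A.3]{BY24}. Your extra care about the harmonic colour constraint ($\alpha_1=\alpha_2$ in place of the level-$\mu_N$ rule $\zeta_1\zeta_2=\zeta$) and the induction on word length simply fills in the details the paper outsources to that citation.
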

\begin{proof}
    Setting $\tilde{\bullet}$ to be either $\tilde{\sh}$ or $\tilde{\ast}$, this follows from the identity (for explicit proof the reader may refer to \cite[Lemma A3]{BY24})
    \begin{align*}
        \widehat{\Delta}_{\tilde{\bullet}}(\widetilde{\psi}) = \sum_{\tilde{u},\tilde{v} \text{ words in } \widetilde{\mathcal{L}}} (\widetilde{\psi} \mid \tilde{u} \ \tilde{\bullet} \ \tilde{v}) \tilde{u} \otimes \tilde{v},
    \end{align*}
    for any $\widetilde{\psi} \in \Q\langle\langle\widetilde{\mathcal{L}}\rangle\rangle$.
\end{proof}

\section{Congruent double shuffle Lie algebra \texorpdfstring{$\dmr_0^{[N]}$}{dmr0N} }
Motivated by identity \eqref{NMPV_NCMZV}, we relate the formalisms of double shuffle relations for $N$-MPVs and $N$-CMZVs. 
Subsequently, we introduce a congruent double shuffle Lie algebra
$\dmr_0^{[N]}$ and demonstrate that it provides a new $\Q$-form of $\dmr^{\mu_N}_0$. 
Our main result is a $G_N$-invariance characterization theorem, which establishes how these two $\mathbb{Q}$-structures can be reconstructed from each other under Galois action, thus elucidating their intricate relationship.

\subsection{Comparison between the frameworks related to \texorpdfstring{$N$}{N}-MPVs and \texorpdfstring{$N$}{N}-CMZVs}
\begin{lem}
    \label{lem:iso_mathcalF}
    The $\Q(\mu_N)$-algebra morphism ${\mathcal F} : \Q(\mu_N)\langle\langle\widetilde{X}\rangle\rangle \to \Q(\mu_N)\langle\langle X\rangle\rangle$ given by
    \[
        \tilde{x} \mapsto x_0 \text{ and } \tilde{x}_\alpha \mapsto \sum_{m=1}^N \zeta_N^{-m \iota^{-1}(\alpha)} x_{\zeta_N^m} \text{ for } \alpha \in \Z / N \Z,
    \]
    is an isomorphism whose inverse ${\mathcal F}^{-1}$is given by
    \[
        x_0 \mapsto \tilde{x} \text{ and } x_{\zeta_N^m} \mapsto \frac{1}{N} \sum_{a=1}^N \zeta_N^{a m} \tilde{x}_{\iota(a)} \text{ for } m \in \llbracket 1, N\rrbracket.
    \]
\end{lem}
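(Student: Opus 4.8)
The plan is to exhibit the displayed map $\mathcal{F}^{-1}$ as a genuine two-sided inverse of $\mathcal{F}$, reducing everything to the freeness of the two series algebras together with the orthogonality relation for $N$-th roots of unity.

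First I would observe that both $\mathcal{F}$ and the candidate inverse — call it $\mathcal{G}$ for the purposes of this sketch — are well-defined continuous $\Q(\mu_N)$-algebra morphisms. By the universal property of the free noncommutative series algebra, a continuous $\Q(\mu_N)$-algebra morphism is uniquely determined by the images of the alphabet, provided those images lie in the augmentation ideal $\mathfrak{m}$ (generated by the letters), so that the induced infinite sums converge in the $\mathfrak{m}$-adic topology. Here $\mathcal{F}$ and $\mathcal{G}$ each send every letter to a homogeneous element of degree $1$ (a $\Q(\mu_N)$-linear combination of letters), hence they preserve the word-length grading and extend from the polynomial algebras to their series completions without difficulty. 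This also records that the two alphabets have the same cardinality $N+1$, a necessary feature for such an isomorphism to exist.

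By the same universal property, to prove that $\mathcal{F}$ and $\mathcal{G}$ are mutually inverse it suffices to verify the two identities $\mathcal{G}\circ\mathcal{F} = \mathrm{id}$ and $\mathcal{F}\circ\mathcal{G} = \mathrm{id}$ on generators. The letters $\tilde{x}$ and $x_0$ are immediate, since $\mathcal{F}(\tilde{x}) = x_0$ and $\mathcal{G}(x_0) = \tilde{x}$. For the remaining generators everything collapses to the orthogonality relation
\[
    \sum_{m=1}^N \zeta_N^{mk} = \begin{cases} N & \text{if } N \mid k, \\ 0 & \text{otherwise,}\end{cases}
\]
which I would invoke as the single computational input. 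Concretely, applying $\mathcal{G}$ to $\mathcal{F}(\tilde{x}_\alpha) = \sum_{m=1}^N \zeta_N^{-m\iota^{-1}(\alpha)} x_{\zeta_N^m}$ produces $\frac{1}{N}\sum_{a=1}^N\big(\sum_{m=1}^N \zeta_N^{m(a - \iota^{-1}(\alpha))}\big)\tilde{x}_{\iota(a)}$; the inner sum vanishes unless $a \equiv \iota^{-1}(\alpha) \pmod N$, and since both $a$ and $\iota^{-1}(\alpha)$ range in $\llbracket 1, N\rrbracket$ this forces $a = \iota^{-1}(\alpha)$, leaving $\tilde{x}_{\iota(\iota^{-1}(\alpha))} = \tilde{x}_\alpha$. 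The reverse composition on $x_{\zeta_N^m}$ is entirely symmetric, using $\iota^{-1}(\iota(a)) = a$ for $a \in \llbracket 1, N\rrbracket$.

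The argument carries no serious obstacle; its only delicate point is the index bookkeeping around $\iota$ and $\iota^{-1}$. The hard part, such as it is, will be to use correctly that $\iota$ and $\iota^{-1}$ are mutually inverse bijections between $\llbracket 1, N\rrbracket$ and $\Z/N\Z$, so that the congruence selected by the orthogonality relation picks out exactly one surviving term and returns the intended letter. Care is also needed to confirm at the outset that the images of the letters lie in the augmentation ideal, so that $\mathcal{F}$ and $\mathcal{G}$ genuinely extend to the series completions rather than merely to the polynomial subalgebras.
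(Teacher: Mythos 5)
Your proposal is correct and follows essentially the same route as the paper, which likewise reduces the claim to a direct computation on generators using the orthogonality relation $\sum_{j=1}^N \zeta_N^{Mj} = N$ if $N \mid M$ and $0$ otherwise. Your additional remarks on continuity, the augmentation ideal, and the $\iota$/$\iota^{-1}$ bookkeeping are sound elaborations of what the paper leaves implicit.
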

\begin{proof}
    This follows from a direct computation using the identity
    \begin{equation*}
        \sum_{j=1}^N \zeta_{N}^{Mj} =
        \begin{cases}
            N & \text{if }  N \ \mid \ M\\
            0 & \text{otherwise}
        \end{cases}
    \end{equation*}
\end{proof}

\begin{lem}
    The $\Q(\mu_N)$-algebra isomorphism ${\mathcal F} : \Q(\mu_N)\langle\langle\widetilde{X}\rangle\rangle \to \Q(\mu_N)\langle\langle X\rangle\rangle$ restricts to the $\Q(\mu_N)$-algebra isomorphism ${\mathcal F}_Y : \Q(\mu_N)\langle\langle\widetilde{Y}\rangle\rangle \to \Q(\mu_N)\langle\langle Y\rangle\rangle$ given by
    \[
        \tilde{y}_{k, \alpha} \mapsto \sum_{m=1}^N \zeta_N^{-m \iota^{-1}(\alpha)} y_{k, \zeta_N^m} \text{ for } (k, \alpha) \in \Z_{>0} \times \Z / N \Z.
    \]
\end{lem}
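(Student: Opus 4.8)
The plan is to define $\mathcal{F}_Y$ simply as the restriction of $\mathcal{F}$ to the subalgebra $\Q(\mu_N)\langle\langle\widetilde{Y}\rangle\rangle \subseteq \Q(\mu_N)\langle\langle\widetilde{X}\rangle\rangle$, and then to verify three things: that the restriction is given by the stated formula on the generators $\tilde{y}_{k,\alpha}$, that its image lies in $\Q(\mu_N)\langle\langle Y\rangle\rangle$, and that it is an isomorphism onto the latter.

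First I would compute the image of a generator. Under the embedding $\Q(\mu_N)\langle\langle\widetilde{Y}\rangle\rangle \hookrightarrow \Q(\mu_N)\langle\langle\widetilde{X}\rangle\rangle$ we have $\tilde{y}_{k,\alpha} = \tilde{x}^{k-1}\tilde{x}_\alpha$, so since $\mathcal{F}$ is an algebra morphism, Lemma \ref{lem:iso_mathcalF} gives
\[
\mathcal{F}(\tilde{y}_{k,\alpha}) = \mathcal{F}(\tilde{x})^{k-1}\,\mathcal{F}(\tilde{x}_\alpha) = x_0^{k-1}\sum_{m=1}^N \zeta_N^{-m\iota^{-1}(\alpha)}x_{\zeta_N^m} = \sum_{m=1}^N \zeta_N^{-m\iota^{-1}(\alpha)}\,x_0^{k-1}x_{\zeta_N^m}.
\]
Since $x_0^{k-1}x_{\zeta_N^m}$ is precisely the image of $y_{k,\zeta_N^m}$ under the embedding $\Q(\mu_N)\langle\langle Y\rangle\rangle \hookrightarrow \Q(\mu_N)\langle\langle X\rangle\rangle$, this identifies $\mathcal{F}(\tilde{y}_{k,\alpha})$ with the claimed element $\sum_{m=1}^N \zeta_N^{-m\iota^{-1}(\alpha)}y_{k,\zeta_N^m}$ of $\Q(\mu_N)\langle\langle Y\rangle\rangle$.

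To see that $\mathcal{F}$ carries all of $\Q(\mu_N)\langle\langle\widetilde{Y}\rangle\rangle$ into $\Q(\mu_N)\langle\langle Y\rangle\rangle$, I would use the direct sum decompositions recalled before the statement. Under the embeddings, $\Q(\mu_N)\langle\langle Y\rangle\rangle$ (resp. $\Q(\mu_N)\langle\langle\widetilde{Y}\rangle\rangle$) is identified with the closed span of the unit and all words ending in a letter $x_\zeta$ (resp. $\tilde{x}_\alpha$), while $\Q(\mu_N)\langle\langle X\rangle\rangle x_0$ (resp. $\Q(\mu_N)\langle\langle\widetilde{X}\rangle\rangle\tilde{x}$) is the closed span of words ending in $x_0$ (resp. $\tilde{x}$). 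Because $\mathcal{F}$ sends $\tilde{x}$ to $x_0$ and each $\tilde{x}_\alpha$ to a $\Q(\mu_N)$-combination of the letters $x_{\zeta_N^m}$, the last letter of every monomial occurring in $\mathcal{F}(\tilde{w})$ is $x_0$ whenever $\tilde{w}$ ends in $\tilde{x}$ and is some $x_{\zeta_N^m}$ whenever $\tilde{w}$ ends in $\tilde{x}_\alpha$. Extending by linearity, $\mathcal{F}$ respects both summands, so in particular $\mathcal{F}(\Q(\mu_N)\langle\langle\widetilde{Y}\rangle\rangle) \subseteq \Q(\mu_N)\langle\langle Y\rangle\rangle$.

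Finally, for surjectivity onto $\Q(\mu_N)\langle\langle Y\rangle\rangle$ I would run the identical argument for the inverse $\mathcal{F}^{-1}$ of Lemma \ref{lem:iso_mathcalF}, which sends $x_0$ to $\tilde{x}$ and each $x_{\zeta_N^m}$ to a combination of the letters $\tilde{x}_{\iota(a)}$; the same last-letter bookkeeping gives $\mathcal{F}^{-1}(\Q(\mu_N)\langle\langle Y\rangle\rangle) \subseteq \Q(\mu_N)\langle\langle\widetilde{Y}\rangle\rangle$. Combining the two inclusions yields $\Q(\mu_N)\langle\langle Y\rangle\rangle = \mathcal{F}(\mathcal{F}^{-1}(\Q(\mu_N)\langle\langle Y\rangle\rangle)) \subseteq \mathcal{F}(\Q(\mu_N)\langle\langle\widetilde{Y}\rangle\rangle) \subseteq \Q(\mu_N)\langle\langle Y\rangle\rangle$, forcing equality; as $\mathcal{F}$ is already injective, its restriction $\mathcal{F}_Y$ is the asserted isomorphism. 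There is no serious obstacle here, and the only point requiring care is surjectivity: since the $\tilde{y}_{k,\alpha}$ are not sent to single basis words, one cannot read surjectivity off the formula on generators, which is why I would invoke the explicit inverse to control the second summand symmetrically.
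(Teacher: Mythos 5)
Your proposal is correct and follows essentially the same route as the paper: both arguments come down to restricting $\mathcal{F}$ and its explicit inverse $\mathcal{F}^{-1}$ to the $\widetilde{Y}$- and $Y$-subalgebras and checking compatibility with the embeddings $\tilde{y}_{k,\alpha} \mapsto \tilde{x}^{k-1}\tilde{x}_\alpha$ and $y_{k,\zeta}\mapsto x_0^{k-1}x_\zeta$. The only difference is one of presentation: the paper phrases this as uniqueness of the algebra morphisms fitting into the two commuting squares, while you verify the required image containments explicitly via the direct sum decompositions and a last-letter argument, thereby spelling out a detail the paper treats as immediate.
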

\begin{proof}
    This follows from the fact ${\mathcal F}_Y : \Q(\mu_N)\langle\langle\widetilde{Y}\rangle\rangle \to \Q(\mu_N)\langle\langle Y\rangle\rangle$ is the unique $\Q(\mu_N)$-algebra morphism such that the following diagram
    \[\begin{tikzcd}
         \Q(\mu_N)\langle\langle \widetilde{Y}\rangle\rangle \ar[rr, "{\mathcal F}_Y"] \ar[d, hook] && \Q(\mu_N)\langle\langle Y\rangle\rangle \ar[d, hook'] \\
        \Q(\mu_N)\langle\langle \widetilde{X}\rangle\rangle \ar[rr, "{\mathcal F}"] && \Q(\mu_N)\langle\langle X\rangle\rangle
    \end{tikzcd}\]
    commutes, whose inverse is the unique $\Q(\mu_N)$-algebra morphism $\Q(\mu_N)\langle\langle Y\rangle\rangle \to \Q(\mu_N)\langle\langle\widetilde{Y}\rangle\rangle$ such that the following diagram
    \[\begin{tikzcd}
         \Q(\mu_N)\langle\langle Y\rangle\rangle \ar[rr] \ar[d, hook] && \Q(\mu_N)\langle\langle \widetilde{Y}\rangle\rangle \ar[d, hook'] \\
        \Q(\mu_N)\langle\langle X\rangle\rangle \ar[rr, "{\mathcal F}^{-1}"] && \Q(\mu_N)\langle\langle \widetilde{X}\rangle\rangle
    \end{tikzcd}\]
    commutes.
\end{proof}

\begin{lem}
    \label{lem:diagpi_diagqp}
    The following diagrams of $\Q(\mu_N)$-linear maps are commutative
    \begin{equation}\label{diag:piY_pitildeY}\begin{tikzcd}
        \Q(\mu_N)\langle\langle\widetilde{X}\rangle\rangle \ar[rr, "{\mathcal F}"] \ar[d, "\pi_{\widetilde{Y}}"'] &&  \ar[d, "\pi_Y"] \Q(\mu_N)\langle\langle X\rangle\rangle \\
        \Q(\mu_N)\langle\langle \widetilde{Y}\rangle\rangle \ar[rr, "{\mathcal F}_Y"] && \Q(\mu_N)\langle\langle Y\rangle\rangle
    \end{tikzcd}\end{equation}
    and
    \begin{equation}\label{diag:p_tildeq}\begin{tikzcd}
        \Q(\mu_N)\langle\langle\widetilde{X}\rangle\rangle \ar[rr, "{\mathcal F}"] \ar[d, "\widetilde{\mathbf{q}}"'] && \ar[d, "\mathbf{p}"] \Q(\mu_N)\langle\langle X\rangle\rangle \\
        \Q(\mu_N)\langle\langle \widetilde{X}\rangle\rangle \ar[rr, "{\mathcal F}"] && \Q(\mu_N)\langle\langle X\rangle\rangle
    \end{tikzcd}\end{equation}
\end{lem}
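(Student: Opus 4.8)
The plan is to treat the two squares separately: \eqref{diag:piY_pitildeY} follows from a decomposition argument, while \eqref{diag:p_tildeq} requires an explicit computation on words together with a change of summation variable.

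For the first square, I would argue that $\mathcal F$ respects the two direct sum decompositions $\Q(\mu_N)\langle\langle\widetilde{X}\rangle\rangle = \Q(\mu_N)\langle\langle\widetilde{Y}\rangle\rangle \oplus \Q(\mu_N)\langle\langle\widetilde{X}\rangle\rangle\tilde{x}$ and $\Q(\mu_N)\langle\langle X\rangle\rangle = \Q(\mu_N)\langle\langle Y\rangle\rangle \oplus \Q(\mu_N)\langle\langle X\rangle\rangle x_0$. Indeed, by the previous lemma $\mathcal F$ restricts to the isomorphism $\mathcal F_Y$, so it carries the first summand $\Q(\mu_N)\langle\langle\widetilde{Y}\rangle\rangle$ onto $\Q(\mu_N)\langle\langle Y\rangle\rangle$; and since $\mathcal F$ is an algebra morphism with $\mathcal F(\tilde{x}) = x_0$, it sends $\psi\tilde{x} \mapsto \mathcal F(\psi)x_0$, hence maps the second summand $\Q(\mu_N)\langle\langle\widetilde{X}\rangle\rangle\tilde{x}$ into $\Q(\mu_N)\langle\langle X\rangle\rangle x_0$. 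Writing any $\widetilde\psi$ as the sum of its two components and recalling that $\pi_{\widetilde{Y}}$ and $\pi_Y$ are the projections onto the first summands along the second summands, this yields $\pi_Y \circ \mathcal F = \mathcal F_Y \circ \pi_{\widetilde{Y}}$ at once.

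For the second square, since $\mathbf p$, $\widetilde{\mathbf q}$ fix $x_0$, $\tilde{x}$ respectively and all maps are continuous and $\Q(\mu_N)$-linear, it suffices to check equality on a general word $\tilde{w} = \tilde{x}^{k_1-1}\tilde{x}_{\alpha_1}\cdots\tilde{x}^{k_r-1}\tilde{x}_{\alpha_r}\tilde{x}^{k_{r+1}-1}$. Expanding $\mathcal F(\tilde w)$ through $\mathcal F(\tilde{x}_{\alpha_i}) = \sum_{m_i=1}^N \zeta_N^{-m_i\iota^{-1}(\alpha_i)}x_{\zeta_N^{m_i}}$, the effect of $\mathbf p$ is to replace each letter $x_{\zeta_N^{m_i}}$ by $x_{\zeta_N^{m_1+\cdots+m_i}}$. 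I would then perform the change of summation variable $M_i := m_1 + \cdots + m_i \bmod N$, which is a bijection of $(\Z/N\Z)^r$ with inverse $m_i = M_i - M_{i-1}$ (setting $M_0 := 0$), and reorganize the exponent $-\sum_i m_i\iota^{-1}(\alpha_i)$ by summation by parts into $-M_r\iota^{-1}(\alpha_r) - \sum_{i=1}^{r-1}M_i\bigl(\iota^{-1}(\alpha_i) - \iota^{-1}(\alpha_{i+1})\bigr)$. Matching this against the direct expansion of $\mathcal F(\widetilde{\mathbf q}(\tilde w))$, which produces the factors $\iota^{-1}(\alpha_i - \alpha_{i+1})$ in the exponents, then concludes the argument.

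The one point deserving care — the main, if minor, obstacle — is that the quantities $\iota^{-1}(\alpha_i) - \iota^{-1}(\alpha_{i+1})$ appearing after the change of variable are not literally equal to $\iota^{-1}(\alpha_i - \alpha_{i+1})$, because $\iota^{-1}$ is only a set-theoretic section of the reduction map $\iota$; they coincide merely modulo $N$. This causes no difficulty, since the coefficient $\zeta_N^{(\cdots)}$ depends on its exponent only through the residue mod $N$, so one may freely substitute $\iota^{-1}(\alpha_i - \alpha_{i+1})$ for $\iota^{-1}(\alpha_i) - \iota^{-1}(\alpha_{i+1})$. I would record this congruence explicitly as the key reduction identifying the two expressions, after which the two squares together establish the lemma.
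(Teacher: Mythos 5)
Your proof is correct and follows essentially the same route as the paper: the paper also establishes \eqref{diag:p_tildeq} by expanding a general word $\tilde{x}^{k_1-1}\tilde{x}_{\alpha_1}\cdots\tilde{x}^{k_r-1}\tilde{x}_{\alpha_r}\tilde{x}^{k_{r+1}-1}$ on both sides and matching them via the change of variables $m_i = m_i' - m_{i-1}'$ (the inverse of your $M_i = m_1+\cdots+m_i$), implicitly using the same mod-$N$ congruence $\iota^{-1}(\alpha_i)-\iota^{-1}(\alpha_{i+1}) \equiv \iota^{-1}(\alpha_i-\alpha_{i+1})$ that you rightly flag. For \eqref{diag:piY_pitildeY} the paper merely invokes an ``immediate computation,'' so your structural argument via the compatibility of $\mathcal{F}$ with the two direct sum decompositions is a perfectly good (and slightly more explicit) way of filling that in.
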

\begin{proof}
    From the definition, an immediate computation enables one to prove the commutativity of Diagram \eqref{diag:piY_pitildeY}. Let us prove the commutativity of Diagram \eqref{diag:p_tildeq}. It is enough to prove it on a basis of $\Q(\mu_N)\langle\langle\widetilde{X}\rangle\rangle$. Let $\tilde{x}^{k_1-1}\tilde{x}_{\alpha_1} \tilde{x}^{k_2-1}\tilde{x}_{\alpha_2} \cdots \tilde{x}^{k_r-1}\tilde{x}_{\alpha_r} \tilde{x}^{k_{r+1}-1}$ with $r \in \Z_{\geq 0}$, $k_1, \dots, k_{r+1} \in \Z_{>0}$ and $\alpha_1, \dots, \alpha_r \in \Z / N \Z$. We have
    \begin{align*}
    & \begin{aligned} {\mathcal F} \circ \widetilde{\mathbf{q}}(\tilde{x}^{k_1-1}\tilde{x}_{\alpha_1} \cdots \tilde{x}^{k_{r-1}-1}\tilde{x}_{\alpha_{r-1}} \tilde{x}^{k_r-1}&\tilde{x}_{\alpha_r} \tilde{x}^{k_{r+1}-1}) \\
    & = {\mathcal F}(\tilde{x}^{k_1-1}\tilde{x}_{\alpha_1 - \alpha_2} \cdots \tilde{x}^{k_{r-1}-1}\tilde{x}_{\alpha_{r-1} - \alpha_r} \tilde{x}^{k_r-1}\tilde{x}_{\alpha_r} \tilde{x}^{k_{r+1}-1}) \end{aligned} \\
    & \begin{aligned}= x_0^{k_1-1} \sum_{m_1^\prime=1}^N \zeta_N^{-m_1^\prime \iota^{-1}(\alpha_1-\alpha_2)} x_{\zeta_N^{m_1^\prime}} \cdots x_0^{k_{r-1}-1} \sum_{m_{r-1}^\prime=1}^N \zeta_N^{-m}&{}^{_{r-1}^\prime\iota^{-1}(\alpha_{r-1}-\alpha_r)} x_{\zeta_N^{m_{r-1}^\prime}} \\
    & x_0^{k_r-1} \sum_{m_r^\prime=1}^N \zeta_N^{-m_r^\prime \iota^{-1}(\alpha_r)} x_{\zeta_N^{m_r^\prime}} x_0^{k_{r+1}-1} \end{aligned} \\
    & \begin{aligned}= \sum_{m_1^\prime=1}^N \sum_{m_2^\prime=1}^N \cdots \sum_{m_r^\prime=1}^N \zeta_N^{-m_1^\prime \iota^{-1}(\alpha_1) + (m_1^\prime-m_2^\prime) \iota^{-1}(\alpha_2)+ \cdots +(}&{}^{m_{r-1}^\prime-m_r^\prime)\iota^{-1}(\alpha_r)} \\ & x_0^{k_1-1} x_{\zeta_N^{m_1^\prime}} x_0^{k_2-1} x_{\zeta_N^{m_2^\prime}} \cdots x_0^{k_r-1} x_{\zeta_N^{m_r^\prime}} x_0^{k_{r+1}-1} \end{aligned}\\
    & \begin{aligned}= \sum_{m_1=1}^N \sum_{m_2=1}^N \cdots \sum_{m_r=1}^N \zeta_N^{-m_1 \iota^{-1}(\alpha_1) - m_2 \iota^{-1}(\alpha_2)}&{}^{- \cdots - m_r \iota^{-1}(\alpha_r)} \\ & x_0^{k_1-1} x_{\zeta_N^{m_1}} x_0^{k_2-1} x_{\zeta_N^{m_1+m_2}} \cdots x_0^{k_r-1} x_{\zeta_N^{m_1 + \cdots + m_r}} x_0^{k_{r+1}-1}, \end{aligned}
    \end{align*}
    where the last equality follows from the change of variable $m_1 = m_1^\prime$, $m_2 = m_2^\prime - m_1^\prime, \dots$, $m_r = m_r^\prime - m_{r-1}^\prime$. On the other hand,
    \begin{align*}
    & \mathbf{p} \circ {\mathcal F}(\tilde{x}^{k_1-1}\tilde{x}_{\alpha_1} \tilde{x}^{k_2-1}\tilde{x}_{\alpha_2} \cdots \tilde{x}^{k_r-1}\tilde{x}_{\alpha_r} \tilde{x}^{k_{r+1}-1}) \\
    & = \mbox{\small$\displaystyle\mathbf{p}\left(x_0^{k_1-1} \sum_{m_1=1}^N \zeta_N^{-m_1 \iota^{-1}(\alpha_1)} x_{\zeta_N^{m_1}} x_0^{k_2-1} \sum_{m_2=1}^N \zeta_N^{-m_2 \iota^{-1}(\alpha_2)} x_{\zeta_N^{m_2}} \cdots x_0^{k_r-1} \sum_{m_r=1}^N \zeta_N^{-m_r \iota^{-1}(\alpha_r)} x_{\zeta_N^{m_r}} x_0^{k_{r+1}-1}\right)$} \\
    & \begin{aligned}= \sum_{m_1=1}^N \sum_{m_2=1}^N \cdots \sum_{m_r=1}^N \zeta_N^{-m_1 \iota^{-1}(\alpha_1) - m_2 \iota^{-1}(\alpha_2)}&{}^{- \cdots - m_r \iota^{-1}(\alpha_r)} \\ & x_0^{k_1-1} x_{\zeta_N^{m_1}} x_0^{k_2-1} x_{\zeta_N^{m_1+m_2}} \cdots x_0^{k_r-1} x_{\zeta_N^{m_1 + \cdots + m_r}} x_0^{k_{r+1}-1}, \end{aligned}
    \end{align*}
    thus proving that ${\mathcal F} \circ \widetilde{\mathbf{q}} = \mathbf{p} \circ {\mathcal F}$.
\end{proof}

\begin{lem}
    \begin{enumerate}[label=(\alph*), leftmargin=*]
        \item \label{lem: Hopf compatibility shuffle} The map ${\mathcal F} : (\Q(\mu_N)\langle\langle\widetilde{X}\rangle\rangle, \widehat{\Delta}_{\tilde{\sh}}) \to (\Q(\mu_N)\langle\langle X\rangle\rangle, \widehat{\Delta}_\sh)$ is a Hopf algebra isomorphism;
        \item \label{lem: Hopf compatibility stuffle} The map ${\mathcal F}_Y : (\Q(\mu_N)\langle\langle\widetilde{Y}\rangle\rangle, \widehat{\Delta}_{\tilde{\ast}}) \to (\Q(\mu_N)\langle\langle Y\rangle\rangle, \widehat{\Delta}_\ast)$ is Hopf algebra isomorphism.
    \end{enumerate}
    \label{lem: Hopf compatibility}
\end{lem}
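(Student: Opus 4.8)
The plan is to prove that $\mathcal{F}$ and $\mathcal{F}_Y$ are Hopf algebra isomorphisms by verifying compatibility with the respective coproducts on generators, and then invoking the algebra-morphism property to conclude the full compatibility. Since both $\mathcal{F}$ and $\mathcal{F}_Y$ are already known (by Lemma \ref{lem:iso_mathcalF} and the subsequent lemma) to be $\Q(\mu_N)$-algebra isomorphisms, and since the coproducts $\widehat{\Delta}_{\tilde\sh}, \widehat{\Delta}_\sh, \widehat{\Delta}_{\tilde\ast}, \widehat{\Delta}_\ast$ are all \emph{algebra} morphisms, it suffices to check the square $\widehat{\Delta}_\sh \circ \mathcal{F} = (\mathcal{F} \otimes \mathcal{F}) \circ \widehat{\Delta}_{\tilde\sh}$ (and its harmonic analogue) on the algebra generators $\tilde{x}, \tilde{x}_\alpha$ (resp. $\tilde{y}_{k,\alpha}$). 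Both sides are then algebra morphisms agreeing on generators, hence equal; compatibility with counit and antipode follows automatically since a bialgebra morphism between Hopf algebras that respects the coproduct is automatically a Hopf algebra morphism.

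For part \ref{lem: Hopf compatibility shuffle}, I would compute both sides on the generators. On $\tilde{x}$: $\mathcal{F}(\tilde{x}) = x_0$, and $\widehat{\Delta}_\sh(x_0) = x_0 \otimes 1 + 1 \otimes x_0 = (\mathcal{F}\otimes\mathcal{F})(\tilde{x}\otimes 1 + 1 \otimes \tilde{x})$, which matches $(\mathcal{F}\otimes\mathcal{F})\widehat{\Delta}_{\tilde\sh}(\tilde{x})$. On $\tilde{x}_\alpha$: since $\widehat{\Delta}_\sh$ is primitive on each $x_{\zeta_N^m}$ and $\mathcal{F}(\tilde{x}_\alpha) = \sum_m \zeta_N^{-m\iota^{-1}(\alpha)} x_{\zeta_N^m}$ is a $\Q(\mu_N)$-linear combination of primitive elements, $\widehat{\Delta}_\sh(\mathcal{F}(\tilde{x}_\alpha)) = \mathcal{F}(\tilde{x}_\alpha)\otimes 1 + 1 \otimes \mathcal{F}(\tilde{x}_\alpha)$, which equals $(\mathcal{F}\otimes\mathcal{F})\widehat{\Delta}_{\tilde\sh}(\tilde{x}_\alpha)$. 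This is essentially immediate because the shuffle coproduct makes all letters primitive in both alphabets.

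For part \ref{lem: Hopf compatibility stuffle}, the verification is the genuinely substantive one. On the generator $\tilde{y}_{k,\alpha}$, the left side $\widehat{\Delta}_\ast(\mathcal{F}_Y(\tilde{y}_{k,\alpha})) = \widehat{\Delta}_\ast\bigl(\sum_{m=1}^N \zeta_N^{-m\iota^{-1}(\alpha)} y_{k,\zeta_N^m}\bigr)$ expands, via the harmonic coproduct formula for $\Q\langle\langle Y\rangle\rangle$, into primitive terms plus $\sum_m \zeta_N^{-m\iota^{-1}(\alpha)} \sum_{\substack{k_1+k_2=k\\ \zeta_1\zeta_2 = \zeta_N^m}} y_{k_1,\zeta_1}\otimes y_{k_2,\zeta_2}$. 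The right side $(\mathcal{F}_Y\otimes\mathcal{F}_Y)\widehat{\Delta}_{\tilde\ast}(\tilde{y}_{k,\alpha})$ produces primitive terms plus $\sum_{k_1+k_2=k} \mathcal{F}_Y(\tilde{y}_{k_1,\alpha})\otimes\mathcal{F}_Y(\tilde{y}_{k_2,\alpha}) = \sum_{k_1+k_2=k}\bigl(\sum_a \zeta_N^{-a\iota^{-1}(\alpha)} y_{k_1,\zeta_N^a}\bigr)\otimes\bigl(\sum_b \zeta_N^{-b\iota^{-1}(\alpha)} y_{k_2,\zeta_N^b}\bigr)$. The hard part is matching these two double sums: I expect to reindex the right side by collecting the coefficient of each tensor $y_{k_1,\zeta_N^a}\otimes y_{k_2,\zeta_N^b}$ and the left side by writing $\zeta_1 = \zeta_N^a$, $\zeta_2 = \zeta_N^b$ with the constraint $\zeta_N^{a+b} = \zeta_N^m$, i.e.\ $m \equiv a+b \pmod N$, so that the left coefficient becomes $\zeta_N^{-(a+b)\iota^{-1}(\alpha)}$. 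Since $\iota^{-1}(\alpha)$ is a fixed representative and $\zeta_N^{-(a+b)\iota^{-1}(\alpha)} = \zeta_N^{-a\iota^{-1}(\alpha)}\zeta_N^{-b\iota^{-1}(\alpha)}$, the two coefficients agree termwise, establishing the identity. The only delicacy is bookkeeping of the modular constraint linking $m$ to $(a,b)$ and confirming that summing over $m$ on the left corresponds exactly to the free double sum over $(a,b)$ on the right, which follows because the map $(a,b)\mapsto a+b \bmod N$ together with the splitting is a clean reparametrization.
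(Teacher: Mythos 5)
Your proposal is correct and takes essentially the same approach as the paper: since all maps involved are $\Q(\mu_N)$-algebra morphisms, one reduces the commutativity of the coproduct squares $\widehat{\Delta}_\sh \circ \mathcal{F} = \mathcal{F}^{\otimes 2} \circ \widehat{\Delta}_{\tilde{\sh}}$ and $\widehat{\Delta}_\ast \circ \mathcal{F}_Y = \mathcal{F}_Y^{\otimes 2} \circ \widehat{\Delta}_{\tilde{\ast}}$ to a check on the generators $\tilde{x}, \tilde{x}_\alpha$ (resp. $\tilde{y}_{k,\alpha}$). The paper dismisses this check as an ``immediate computation,'' whereas you carry it out explicitly; your reindexing in the harmonic case, identifying the constraint $\zeta_1\zeta_2 = \zeta_N^m$ with $m \equiv a+b \pmod{N}$ and using $\zeta_N^{-(a+b)\iota^{-1}(\alpha)} = \zeta_N^{-a\iota^{-1}(\alpha)}\zeta_N^{-b\iota^{-1}(\alpha)}$, is exactly the detail the paper omits and is correct.
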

\begin{proof}
    \begin{enumerate}[label=(\alph*), leftmargin=*]
        \item The statement is equivalent to the commutativity of the following diagram
        \[\begin{tikzcd}
            \Q(\mu_N)\langle\langle\widetilde{X}\rangle\rangle \ar[rr, "{\mathcal F}"] \ar[d, "\widehat{\Delta}_{\tilde{\sh}}"'] && \Q(\mu_N)\langle\langle X\rangle\rangle \ar[d, "\widehat{\Delta}_\sh"] \\
            \Q(\mu_N)\langle\langle \widetilde{X}\rangle\rangle^{\hat{\otimes} 2} \ar[rr, "{\mathcal F}^{\otimes 2}"] && \Q(\mu_N)\langle\langle X\rangle\rangle^{\hat{\otimes} 2}
        \end{tikzcd}\]
        Since all arrows are $\Q(\mu_N)$-algebra morphisms, it suffices to verify equality of the morphisms $\widehat{\Delta}_\sh \circ {\mathcal F}$ and ${\mathcal F}^{\otimes 2} \circ \widehat{\Delta}_{\tilde{\sh}}$ on generators of the algebra $\Q(\mu_N)\langle\langle\widetilde{X}\rangle\rangle$. This can be achieved through an immediate computation.
        \item The statement is equivalent to the commutativity of the following diagram
        \[\begin{tikzcd}
            \Q(\mu_N)\langle\langle\widetilde{Y}\rangle\rangle \ar[rr, "{\mathcal F}_Y"] \ar[d, "\widehat{\Delta}_{\tilde{\ast}}"'] && \Q(\mu_N)\langle\langle Y\rangle\rangle \ar[d, "\widehat{\Delta}_\ast"] \\
            \Q(\mu_N)\langle\langle \widetilde{Y}\rangle\rangle^{\hat{\otimes} 2} \ar[rr, "{\mathcal F}_Y^{\otimes 2}"] && \Q(\mu_N)\langle\langle Y\rangle\rangle^{\hat{\otimes} 2}
        \end{tikzcd}\]
        Since all arrows are $\Q(\mu_N)$-algebra morphisms, it suffices to verify equality of the morphisms $\widehat{\Delta}_\ast \circ {\mathcal F}_Y$ and ${\mathcal F}_Y^{\otimes 2} \circ \widehat{\Delta}_{\tilde{\ast}}$ on generators of the algebra $\Q(\mu_N)\langle\langle\widetilde{X}\rangle\rangle$. This can be achieved through an immediate computation.
    \end{enumerate}
\end{proof}

\noindent For $a \in \llbracket 1, N\rrbracket$, we define the $\Q(\mu_N)$-algebra automorphism $\tilde{t}_a$ of $\Q(\mu_N)\langle\langle\widetilde{X}\rangle\rangle$ as follows:
\[
    \tilde{x} \mapsto \tilde{x} \text{ and } \tilde{x}_\alpha \mapsto \zeta_N^{a \iota^{-1}(\alpha)} \tilde{x}_\alpha \text{ for } \alpha \in \Z / N \Z.   
\]
\begin{lem}
    \label{tzetaa_ta}
    For $a \in \llbracket 1, N\rrbracket$, the following diagram of $\Q(\mu_N)$-algebra morphisms is commutative:
\[\begin{tikzcd}
    \Q(\mu_N)\langle\langle \widetilde{X}\rangle\rangle \ar[rr, "{\mathcal F}"] \ar[d, "\tilde{t}_a"'] &&  \ar[d, "t_{\zeta_N^a}"] \Q(\mu_N)\langle\langle X\rangle\rangle \\
    \Q(\mu_N)\langle\langle \widetilde{X}\rangle\rangle \ar[rr, "{\mathcal F}"] && \Q(\mu_N)\langle\langle X\rangle\rangle
\end{tikzcd}\]
where  $t_{\zeta_N^a}$ is the automorphism of $\Q(\mu_N)\langle\langle X\rangle\rangle$ given in \eqref{eq: auto t}.
\end{lem}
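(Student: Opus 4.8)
The plan is to reduce the claim to a verification on the generators $\tilde{x}$ and $\tilde{x}_\alpha$ ($\alpha \in \Z/N\Z$) of $\Q(\mu_N)\langle\langle\widetilde{X}\rangle\rangle$. Both composites $t_{\zeta_N^a} \circ {\mathcal F}$ and ${\mathcal F} \circ \tilde{t}_a$ are $\Q(\mu_N)$-algebra morphisms, being composites of such; hence they coincide as soon as they agree on an algebra-generating set. So it suffices to compare the two composites on each letter.

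On $\tilde{x}$ the identity is immediate, since $\tilde{t}_a(\tilde{x}) = \tilde{x}$, ${\mathcal F}(\tilde{x}) = x_0$, and $t_{\zeta_N^a}(x_0) = x_0$, so both composites send $\tilde{x}$ to $x_0$. The substance lies in the letters $\tilde{x}_\alpha$. Travelling down then right gives
\[
    {\mathcal F} \circ \tilde{t}_a(\tilde{x}_\alpha) = \zeta_N^{a\iota^{-1}(\alpha)} {\mathcal F}(\tilde{x}_\alpha) = \sum_{m=1}^N \zeta_N^{(a-m)\iota^{-1}(\alpha)} x_{\zeta_N^m},
\]
whereas travelling right then down, using $t_{\zeta_N^a}(x_{\zeta_N^m}) = x_{\zeta_N^{a+m}}$, gives
\[
    t_{\zeta_N^a} \circ {\mathcal F}(\tilde{x}_\alpha) = \sum_{m=1}^N \zeta_N^{-m\iota^{-1}(\alpha)} x_{\zeta_N^{a+m}}.
\]

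It remains to match these two expressions, and here the only point requiring care is the reindexing of the second sum. Since $x_{\zeta_N^m}$ depends only on $m \bmod N$ and the index runs over the complete residue system $\llbracket 1, N\rrbracket$, the substitution $n \equiv m + a \pmod N$ (equivalently $m \equiv n - a$) permutes the summation range; rewriting $\zeta_N^{-m\iota^{-1}(\alpha)} = \zeta_N^{(a-n)\iota^{-1}(\alpha)}$ then turns the second sum into $\sum_{n=1}^N \zeta_N^{(a-n)\iota^{-1}(\alpha)} x_{\zeta_N^n}$, which is exactly the first expression after renaming $n$ to $m$. This establishes the equality on generators and hence the commutativity of the diagram. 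I expect no genuine obstacle beyond this bookkeeping: the result is essentially the statement that the multiplicative twist $\tilde{t}_a$, which rescales $\tilde{x}_\alpha$ by the character value $\zeta_N^{a\iota^{-1}(\alpha)}$, is intertwined by the finite Fourier transform ${\mathcal F}$ with the index-shift automorphism $t_{\zeta_N^a}$ on the $X$-side — a Fourier-theoretic duality between multiplication by a character and translation.
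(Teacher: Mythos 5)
Your proof is correct and takes essentially the same route as the paper's: reduce to the algebra generators $\tilde{x}$ and $\tilde{x}_\alpha$, compute both composites explicitly, and reindex the shifted sum (the paper's change of variable $m = m'+a$ is exactly your substitution $n \equiv m+a \pmod N$). Your explicit observation that the summation range is a complete residue system modulo $N$, so the shift merely permutes it, just spells out a step the paper leaves implicit.
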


\begin{proof}
Since all the maps appearing in the diagram are algebra morphisms, it is enough to show the claim for algebraic generators  $\tilde x$ and $\tilde x_\alpha$ ($\alpha \in \Z/N\Z$).
We have
\begin{align*}
{\mathcal F}\circ\tilde{t}_a(\tilde{x}_\alpha) & = {\mathcal F}(\zeta_N^{a\iota^{-1}(\alpha)}\tilde x_\alpha) =
\zeta_N^{a\iota^{-1}(\alpha)} \sum_{m=1}^N \zeta_N^{-m\iota^{-1}(\alpha)}x_{\zeta_N^m}, \\
t_{\zeta_N^a} \circ {\mathcal F}(\tilde x_b) &
= t_{\zeta_N^a} \left( \sum_{m^\prime=1}^N\zeta_N^{-m^\prime\iota^{-1}(\alpha)} x_{\zeta_N^{m^\prime}} \right)
= \sum_{m^\prime=1}^N\zeta_N^{-m^\prime\iota^{-1}(\alpha)} x_{\zeta_N^{m^\prime + a}} \\
& = \sum_{m=1}^N \zeta_N^{-(m-a)\iota^{-1}(\alpha)} x_{\zeta_N^m}
= \zeta_N^{a \iota^{-1}(\alpha)} \sum_{m=1}^N \zeta_N^{-m \iota^{-1}(\alpha)} x_{\zeta_N^m}.
\end{align*}
We also have ${\mathcal F}\circ\tilde{t}_a(\tilde{x})=t_{\zeta_N^a}\circ{\mathcal F}(\tilde x)$.
Whence our claim is obtained.
\end{proof}

\subsection{Construction of a Lie bracket \texorpdfstring{$\langle-,-\rangle^{\widetilde{}}$}{<-,->~}}
From now on, denote by $G_N$ the Galois group $\mathrm{Gal}(\mathbb{Q}(\mu_N)/\mathbb{Q})$. Recall that
\[
    G_N = \{ \sigma \in \mathrm{Aut}_\Q(\Q(\mu_N)) \, \mid \, \exists k \in \llbracket 1, N-1 \rrbracket \text{ with } \gcd(k, N)=1 \text{ such that } \sigma(\zeta_N)=\zeta_N^k \}.
\]
Therefore, one may identify $G_N$ with $(\mathbb{Z}/N\mathbb{Z})^\times$ under the natural isomorphism
\begin{equation}\label{eq:natural identification}
    G_N \simeq (\mathbb{Z}/N\mathbb{Z})^\times    
\end{equation}
given by $\sigma \mapsto \iota(k)$, for some $k \in \llbracket 1, N-1 \rrbracket$ with $\gcd(k, N)=1$.

\begin{defn}
    For $a \in \llbracket 1, N\rrbracket$, we define the $\Q(\mu_N)$-linear automorphism of $\Q(\mu_N)\langle\langle\widetilde{X}\rangle\rangle$ as follows:
    \[
        \widetilde{T}_a := \frac{1}{N} \sum_{m=1}^N \zeta_N^{-ma} \ \tilde{t}_m.
    \]    
\end{defn}

\begin{lem}\label{lem: Q-structure for T}
    The operator $\widetilde{T}_a$ ($a \in \llbracket 1, N\rrbracket$)
    preserves the $\Q$-form,
    that is, $\widetilde{T}_a \in \mathrm{Aut}_{\Q{\text{-}\mathrm{lin}}}(\Q\langle\langle\widetilde{X}\rangle\rangle)$.
\end{lem}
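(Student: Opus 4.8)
The plan is to exploit the fact that $\widetilde{T}_a$ acts diagonally on the monomial basis of $\Q(\mu_N)\langle\langle\widetilde{X}\rangle\rangle$, with eigenvalues that turn out to lie in $\{0,1\}\subseteq\Q$. Since $\widetilde{T}_a$ is by construction $\Q(\mu_N)$-linear and the words in $\widetilde{X}$ form a topological $\Q(\mu_N)$-basis of $\Q(\mu_N)\langle\langle\widetilde{X}\rangle\rangle$, it suffices to evaluate $\widetilde{T}_a$ on an arbitrary word $\tilde{w} = \tilde{x}^{k_1-1}\tilde{x}_{\alpha_1}\cdots \tilde{x}^{k_r-1}\tilde{x}_{\alpha_r}\tilde{x}^{k_{r+1}-1}$ (every word has this shape after grouping the runs of $\tilde{x}$) and to verify that the output again lies in the $\Q$-span of words. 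First I would record the action of each $\tilde{t}_m$ on such a $\tilde{w}$: since $\tilde{t}_m$ is a $\Q(\mu_N)$-algebra automorphism fixing $\tilde{x}$ and scaling $\tilde{x}_\alpha$ by $\zeta_N^{m\iota^{-1}(\alpha)}$, its multiplicativity forces it to act on $\tilde{w}$ by the single scalar $\zeta_N^{mS(\tilde{w})}$, where $S(\tilde{w}) := \iota^{-1}(\alpha_1) + \cdots + \iota^{-1}(\alpha_r)$ and the interspersed letters $\tilde{x}$ contribute trivially.

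Then I would substitute this into the definition of $\widetilde{T}_a$ to obtain
\[
    \widetilde{T}_a(\tilde{w}) = \frac{1}{N}\sum_{m=1}^N \zeta_N^{-ma}\,\zeta_N^{mS(\tilde{w})}\,\tilde{w} = \left(\frac{1}{N}\sum_{m=1}^N \zeta_N^{m(S(\tilde{w})-a)}\right)\tilde{w},
\]
and apply the root-of-unity orthogonality identity $\sum_{m=1}^N \zeta_N^{mM} = N$ if $N \mid M$ and $0$ otherwise (the identity used in the proof of Lemma~\ref{lem:iso_mathcalF}), taken with $M = S(\tilde{w}) - a$. This collapses the parenthesised scalar to $1$ when $S(\tilde{w}) \equiv a \pmod N$ and to $0$ otherwise, so that
\[
    \widetilde{T}_a(\tilde{w}) = \begin{cases} \tilde{w} & \text{if } S(\tilde{w}) \equiv a \pmod N, \\ 0 & \text{otherwise.}\end{cases}
\]
In other words, every word is an eigenvector of $\widetilde{T}_a$ with eigenvalue in $\{0,1\}$, and $\widetilde{T}_a$ is nothing but the projection of $\Q(\mu_N)\langle\langle\widetilde{X}\rangle\rangle$ onto the component of total weight $a$ for the $\Z/N\Z$-grading attached to $S(\,\cdot\,)$.

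Finally I would conclude by $\Q$-linearity: because $\widetilde{T}_a$ carries each basis word to an integral (in particular rational) multiple of itself, it maps $\Q\langle\langle\widetilde{X}\rangle\rangle$ into $\Q\langle\langle\widetilde{X}\rangle\rangle$ and restricts there to a well-defined $\Q$-linear operator, which is exactly the assertion that $\widetilde{T}_a$ preserves the $\Q$-form. I do not expect a genuine obstacle here, as the argument is essentially a diagonalisation computation; the only points demanding care are the multiplicativity bookkeeping that condenses the per-letter scalars of $\tilde{t}_m$ into the single factor $\zeta_N^{mS(\tilde{w})}$, and the correct handling of the exponent $S(\tilde{w})-a$ when invoking the orthogonality relation.
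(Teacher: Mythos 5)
Your proof is correct, but it takes a genuinely different route from the paper's. You argue by direct diagonalization: since each $\tilde{t}_m$ is an algebra automorphism scaling $\tilde{x}_\alpha$ by $\zeta_N^{m\iota^{-1}(\alpha)}$, every word $\tilde{w}$ is a common eigenvector with eigenvalue $\zeta_N^{mS(\tilde{w})}$, and the orthogonality relation collapses $\widetilde{T}_a$ to the projection onto the closed span of the words with $S(\tilde{w})\equiv a \pmod N$; as the resulting coefficients are $0$ or $1$, the $\Q$-form $\Q\langle\langle\widetilde{X}\rangle\rangle$ is preserved. The paper instead argues by Galois invariance: it lets $G_N$ act on the operator algebra $\Q(\mu_N)\bigl[(\tilde{t}_m)_{m}\bigr]$ by $\sigma \cdot r = \sigma(r)$ and $\sigma\cdot\tilde{t}_m = \tilde{t}_m^{k}$, uses the identity $\tilde{t}_m^{k} = \tilde{t}_{\iota^{-1}(\overline{mk})}$, and checks by reindexing the sum that $\sigma\cdot\widetilde{T}_a(\widetilde{\psi}) = \widetilde{T}_a(\widetilde{\psi})$ for every $\widetilde{\psi}\in\Q\langle\langle\widetilde{X}\rangle\rangle$, so the coefficients of $\widetilde{T}_a(\widetilde{\psi})$ are $G_N$-fixed and hence rational. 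Your computation is more elementary and yields strictly more information, namely the explicit identification of $\widetilde{T}_a$ as the projection onto the weight-$a$ component of the $\Z/N\Z$-grading attached to $S(\,\cdot\,)$, whereas the paper's argument is in the spirit of the $G_N$-descent theme developed later in the paper. One byproduct of your approach is worth flagging: since $\widetilde{T}_a$ is a nontrivial idempotent, it is \emph{not} invertible, so the notation $\mathrm{Aut}_{\Q\text{-}\mathrm{lin}}$ in the statement (and the word ``automorphism'' in the definition of $\widetilde{T}_a$) is strictly a misnomer; what both you and the paper actually prove is $\widetilde{T}_a \in \mathrm{End}_{\Q\text{-}\mathrm{lin}}(\Q\langle\langle\widetilde{X}\rangle\rangle)$, and this weaker statement is all that is needed in the subsequent construction of the derivations $\tilde{d}_{\widetilde{\psi}}$.
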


\begin{proof}
    The group $G_N$ 
    acts on $\Q(\mu_N)\left[(\tilde{t}_a)_{a \in \llbracket 1, N\rrbracket}\right]$ by
    \[
        \sigma \cdot r = \sigma(r), \text{ for } r \in \Q(\mu_N) \text{ and } \sigma \cdot \tilde{t}_a = \tilde{t}^{k}_a \text{ for } a \in \llbracket 1, N\rrbracket.   
    \]
    Next, recall that for any $k \in \llbracket 1, N-1 \rrbracket$ with $\gcd(k, N)=1$, we have 
    \begin{equation}
        \label{tk_identity}
        \tilde{t}_a^k = \tilde{t}_{\iota^{-1}(\overline{a k})}.
    \end{equation}
    Indeed, we have $\tilde{t}_a^k (\tilde x) = \tilde x = \tilde t_{\iota^{-1}(\overline{a k})}(\tilde x)$ and for $\alpha \in \Z / N \Z$
    \[
        \tilde{t}_a^k (\tilde x_\alpha) 
        = \zeta_N^{a k \iota^{-1}(\alpha)} \tilde{x}_\alpha = \zeta_N^{\iota^{-1}(\overline{a k}) \iota^{-1}(\alpha)} \tilde{x}_\alpha = \tilde{t}_{\iota^{-1}(\overline{a k})}(\tilde{x}_\alpha).
    \]
    Finally, for any $\widetilde{\psi} \in \Q\langle\langle\widetilde{X}\rangle\rangle$, we have
    \begin{align*}
        \sigma \cdot \widetilde{T}_a (\widetilde{\psi}) & = \frac{1}{N} \sum_{m^\prime=1}^N \sigma(\zeta_N^{-m^\prime a}) \ \sigma \cdot \tilde{t}_{m^\prime}(\widetilde{\psi}) = \frac{1}{N} \sum_{m^\prime=1}^N \zeta_N^{-k m^\prime a} \ \tilde{t}_{m^\prime}^k(\widetilde{\psi}) = \frac{1}{N} \sum_{m^\prime=1}^N \zeta_N^{-\iota^{-1}(\overline{k m^\prime}) a} \ \tilde{t}_{\iota^{-1}(\overline{k m^\prime})}(\widetilde{\psi}) \\
        & = \frac{1}{N} \sum_{m=1}^N \zeta_N^{-m a} \ \tilde{t}_m(\widetilde{\psi}) = \widetilde{T}_a (\widetilde{\psi}),
    \end{align*}
    where the third equality comes from identity \eqref{tk_identity}. Hence $\widetilde{T}_a(\widetilde{\psi}) \in \Q\langle\langle\widetilde{X}\rangle\rangle$.
\end{proof}

\noindent Lemma \ref{lem: Q-structure for T} enables us to define the following:
\begin{defn}
    For $\widetilde{\psi} \in \Q\langle\langle \widetilde{X}\rangle\rangle$, define a derivation $\tilde{d}_{\widetilde{\psi}} : \Q\langle\langle \widetilde{X}\rangle\rangle \to \Q\langle\langle \widetilde{X}\rangle\rangle$ by
    \begin{align*}
        \tilde{x} \mapsto 0, \text{ and } \, \tilde{x}_\alpha \mapsto \widetilde{T}_{\iota^{-1}(\alpha)}\left(\Bigg[\sum_{\beta \in \Z / N \Z} \tilde{x}_\beta, \ \widetilde{\psi}\Bigg]\right) \text{ for } \alpha \in \Z / N \Z,
    \end{align*}
\end{defn}

\begin{prop}\label{prop: commutativity for d}
For $\widetilde{\psi} \in \Q\langle\langle \widetilde{X}\rangle\rangle$, the following diagram is commutative\footnote{Here, we abusively denote by $\tilde{d}_{\widetilde{\psi}}$ (resp. $d_{{\mathcal F}(\widetilde{\psi})}$) the derivation $\mathrm{id}_{\Q(\mu_N)} \otimes \tilde{d}_{\widetilde{\psi}}$ (resp. $\mathrm{id}_{\Q(\mu_N)} \otimes d_{{\mathcal F}(\widetilde{\psi})}$) of $\Q(\mu_N)\langle\langle\widetilde{X}\rangle\rangle \simeq \Q(\mu_N) \hat{\otimes}_\Q \Q\langle\langle\widetilde{X}\rangle\rangle$ (resp. $\Q(\mu_N)\langle\langle X\rangle\rangle \simeq \Q(\mu_N) \hat{\otimes}_\Q \Q\langle\langle X\rangle\rangle$).}
\[\begin{tikzcd}
    \Q(\mu_N)\langle\langle\widetilde{X}\rangle\rangle \ar[rr, "{\mathcal F}"] \ar[d, "\tilde{d}_{\widetilde{\psi}}"'] && \Q(\mu_N)\langle\langle X\rangle\rangle \ar[d, "d_{{\mathcal F}(\widetilde{\psi})}"] \\ 
    \Q(\mu_N)\langle\langle\widetilde{X}\rangle\rangle \ar[rr, "{\mathcal F}"] && \Q(\mu_N)\langle\langle X\rangle\rangle
\end{tikzcd}\]
where $d_{\mathcal{F}(\widetilde{\psi})}$ is the derivation in \eqref{eq:derivation d}.
\end{prop}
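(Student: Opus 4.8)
The plan is to observe that both composites $\mathcal{F}\circ\tilde{d}_{\widetilde{\psi}}$ and $d_{\mathcal{F}(\widetilde{\psi})}\circ\mathcal{F}$ are \emph{$\mathcal{F}$-twisted derivations} from $\Q(\mu_N)\langle\langle\widetilde{X}\rangle\rangle$ to $\Q(\mu_N)\langle\langle X\rangle\rangle$, in the sense that each sends a product $\tilde{u}\tilde{v}$ to $(\text{image of }\tilde{u})\,\mathcal{F}(\tilde{v}) + \mathcal{F}(\tilde{u})\,(\text{image of }\tilde{v})$; this is immediate since $\mathcal{F}$ is an algebra morphism while $\tilde{d}_{\widetilde{\psi}}$ and $d_{\mathcal{F}(\widetilde{\psi})}$ are derivations. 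Two such continuous twisted derivations that agree on a topological generating set coincide, so it suffices to check agreement on the generators $\tilde{x}$ and $\tilde{x}_\alpha$ ($\alpha\in\Z/N\Z$). On $\tilde{x}$ both sides vanish, since $\tilde{d}_{\widetilde{\psi}}(\tilde{x})=0$ and $d_{\mathcal{F}(\widetilde{\psi})}(\mathcal{F}(\tilde{x}))=d_{\mathcal{F}(\widetilde{\psi})}(x_0)=0$ by \eqref{eq:derivation d}.

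First I would treat the remaining generator $\tilde{x}_\alpha$ by unfolding the left-hand side using the definition of $\tilde{d}_{\widetilde{\psi}}$ together with $\widetilde{T}_{\iota^{-1}(\alpha)}=\frac1N\sum_{m=1}^N\zeta_N^{-m\iota^{-1}(\alpha)}\tilde{t}_m$, obtaining $\mathcal{F}\bigl(\tilde{d}_{\widetilde{\psi}}(\tilde{x}_\alpha)\bigr)=\frac1N\sum_{m=1}^N\zeta_N^{-m\iota^{-1}(\alpha)}\,\mathcal{F}\circ\tilde{t}_m\bigl([\sum_\beta\tilde{x}_\beta,\widetilde{\psi}]\bigr)$. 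Then I would invoke Lemma \ref{tzetaa_ta} to rewrite $\mathcal{F}\circ\tilde{t}_m=t_{\zeta_N^m}\circ\mathcal{F}$, and use that $\mathcal{F}$ is an algebra (hence Lie) morphism to pull it inside the bracket, yielding $\frac1N\sum_{m=1}^N\zeta_N^{-m\iota^{-1}(\alpha)}\,t_{\zeta_N^m}\bigl([\mathcal{F}(\sum_\beta\tilde{x}_\beta),\mathcal{F}(\widetilde{\psi})]\bigr)$.

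The computational heart is to evaluate $\mathcal{F}\bigl(\sum_{\beta\in\Z/N\Z}\tilde{x}_\beta\bigr)$. Swapping the order of summation and using that $\iota^{-1}$ is a bijection $\Z/N\Z\to\llbracket 1,N\rrbracket$, the coefficient of $x_{\zeta_N^m}$ becomes $\sum_{b=1}^N\zeta_N^{-mb}$, which by the orthogonality relation from Lemma \ref{lem:iso_mathcalF} equals $N$ when $m=N$ and $0$ otherwise; hence $\mathcal{F}(\sum_\beta\tilde{x}_\beta)=N x_1$. Substituting this, and using that $t_{\zeta_N^m}$ is an algebra automorphism with $t_{\zeta_N^m}(x_1)=x_{\zeta_N^m}$ by \eqref{eq: auto t}, the left-hand side collapses to $\sum_{m=1}^N\zeta_N^{-m\iota^{-1}(\alpha)}[x_{\zeta_N^m},t_{\zeta_N^m}(\mathcal{F}(\widetilde{\psi}))]$. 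On the other hand, applying the definition \eqref{eq:derivation d} of $d_{\mathcal{F}(\widetilde{\psi})}$ directly to $\mathcal{F}(\tilde{x}_\alpha)=\sum_{m=1}^N\zeta_N^{-m\iota^{-1}(\alpha)}x_{\zeta_N^m}$ gives precisely the same expression, which establishes the equality on $\tilde{x}_\alpha$ and hence the commutativity of the diagram.

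The one place that requires genuine care is the bookkeeping that turns the averaging operator $\widetilde{T}_{\iota^{-1}(\alpha)}$ on the $\widetilde{X}$-side into the correct $\zeta_N$-weighted sum of the $t_{\zeta_N^m}$ on the $X$-side: the factor $1/N$ built into $\widetilde{T}$ is exactly cancelled by the factor $N$ produced by the collapse $\mathcal{F}(\sum_\beta\tilde{x}_\beta)=N x_1$, so that once Lemma \ref{tzetaa_ta} and this collapse identity are in hand the two sides match term by term. I do not anticipate any deeper obstacle, as the argument is structurally a verification on generators combined with a single orthogonality computation.
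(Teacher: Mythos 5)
Your proposal is correct and follows essentially the same route as the paper's proof: reduction to the generators $\tilde{x}$ and $\tilde{x}_\alpha$ (justified by the derivation/algebra-morphism structure of the arrows), the intertwining $\mathcal{F}\circ\tilde{t}_m = t_{\zeta_N^m}\circ\mathcal{F}$ from Lemma \ref{tzetaa_ta}, and the orthogonality collapse $\mathcal{F}\bigl(\tfrac{1}{N}\sum_\beta\tilde{x}_\beta\bigr)=x_1$ cancelling the $1/N$ in $\widetilde{T}_{\iota^{-1}(\alpha)}$. The only cosmetic difference is that you formalize the reduction step via the notion of $\mathcal{F}$-twisted derivations, which the paper leaves implicit.
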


\begin{proof}
Since all vertical arrows of the diagram are derivations and all horizontal arrows are algebra morphisms, we will demonstrate this equality by applying both sides to generators $\tilde{x}$ and $\tilde{x}_\alpha$ and showing that they yield the same result. The computation for $\tilde{x}$ being immediate, we will focus on $\tilde{x}_\alpha$. We have  
\begin{align*}
    {\mathcal F} \circ \tilde{d}_{\widetilde{\psi}}(\tilde{x}_a) & = {\mathcal F} \circ \widetilde{T}_{\iota^{-1}(\alpha)}\left(\Bigg[\sum_{\beta \in \Z / N \Z} \tilde{x}_\beta, \ \widetilde{\psi}\Bigg]\right) = \frac{1}{N} \sum_{m=1}^N \zeta_N^{-m\iota^{-1}(\alpha)} \ {\mathcal F} \circ \tilde{t}_m \left(\Bigg[\sum_{\beta \in \Z / N \Z} \tilde{x}_\beta, \ \widetilde{\psi}\Bigg]\right) \\
    & = \sum_{m=1}^N \zeta_N^{-m\iota^{-1}(\alpha)} \ t_{\zeta_N^m} \circ {\mathcal F} \left(\Bigg[\frac{1}{N}\sum_{\beta \in \Z / N \Z} \tilde{x}_\beta, \ \widetilde{\psi}\Bigg]\right) \\
    & = \sum_{m=1}^N \zeta_N^{-m\iota^{-1}(\alpha)} \ t_{\zeta_N^m} \left(\Bigg[{\mathcal F}\left(\frac{1}{N}\sum_{\beta \in \Z / N \Z} \tilde{x}_\beta\right), \ {\mathcal F}(\widetilde{\psi})\Bigg]\right) \\
    & = \sum_{m=1}^N \zeta_N^{-m\iota^{-1}(\alpha)} \ t_{\zeta_N^m} \left(\left[x_1, \ {\mathcal F}(\widetilde{\psi})\right]\right) = \sum_{m=1}^N \zeta_N^{-m\iota^{-1}(\alpha)} \ \left[t_{\zeta_N^m}(x_1), \ t_{\zeta_N^m} \circ {\mathcal F}(\widetilde{\psi})\right] \\
    & = \sum_{m=1}^N \zeta_N^{-m\iota^{-1}(\alpha)} \ \left[x_{\zeta_N^m}, \ t_{\zeta_N^m} \left({\mathcal F}(\widetilde{\psi})\right)\right] = \sum_{m=1}^N \zeta_N^{-m\iota^{-1}(\alpha)} \ d_{{\mathcal F}(\widetilde{\psi})}(x_{\zeta_N^m}) \\
    & = d_{{\mathcal F}(\widetilde{\psi})}\left(\sum_{m=1}^N \zeta_N^{-m\iota^{-1}(\alpha)} \ x_{\zeta_N^m}\right)  = d_{{\mathcal F}(\widetilde{\psi})} \circ {\mathcal F}(\tilde{x}_\alpha),
\end{align*}
where the third equality comes from Lemma \ref{tzetaa_ta}.
\end{proof}

\begin{defn}\label{defn:Congruent double shuffle Lie algebra}
    The \emph{$N$-congruent double shuffle Lie algebra}
    $\dmr_0^{[N]}$ is the $\Q$-linear space consisting of elements $\widetilde{\psi} \in \Q\langle\langle \widetilde{X}\rangle\rangle$ such that
    \begin{enumerate}[label=(\roman*), leftmargin=*, itemsep=3mm]
        \begin{multicols}{2}
        \item \label{dmrN_i} $\displaystyle(\widetilde{\psi} \mid \tilde{x}) = \sum_{\alpha \in \Z / N \Z} (\widetilde{\psi} \mid \tilde{x}_\alpha) = 0$;
        \item \label{dmrN_ii} $\widehat{\Delta}_{\tilde{\sh}}(\widetilde{\psi}) = \widetilde{\psi} \otimes 1 + 1 \otimes \widetilde{\psi}$;
        \item \label{dmrN_iii} $\left(\widetilde{\psi} \mid x_\alpha - x_{-\alpha}\right) = 0$ for any $\alpha \in \Z / N \Z$;
        \item \label{dmrN_iv} $\widehat{\Delta}_{\tilde{\ast}}(\widetilde{\psi}_{\tilde{\ast}}) = \widetilde{\psi}_{\tilde{\ast}} \otimes 1 + 1 \otimes \widetilde{\psi}_{\tilde{\ast}}$;
        \end{multicols}
    \end{enumerate}
    where
    \[
        \widetilde{\psi}_{\tilde{\ast}} = \pi_{\widetilde{Y}} \circ \widetilde{\mathbf{q}}^{-1}(\widetilde{\psi}) + \sum_{n \geq 2} \sum_{a=1}^N \sum_{b_1=1}^N \cdots \sum_{b_n=1}^N \frac{(-1)^{n-1}}{n \ N^{n+1}} \left(\widetilde{\psi} \mid \tilde{x}^{n-1} \tilde{x}_{\iota(a)}\right) \  \tilde{y}_{1,\iota(b_1)} \cdots \tilde{y}_{1,\iota(b_n)} \in \Q\langle\langle \widetilde{Y}\rangle\rangle.
    \]
\end{defn}

Similarly to \eqref{eq: bracket for dmr}, we define a Lie bracket on $\Q\langle\langle \widetilde{X}\rangle\rangle$ as follows:
\begin{equation}\label{eq: bracket for dmr[N]}
    \langle \widetilde{\psi}_1, \widetilde{\psi}_2 \rangle^{\tilde{}} := \tilde{d}_{\widetilde{\psi}_1}(\widetilde{\psi}_2) - \tilde{d}_{\widetilde{\psi}_2}(\widetilde{\psi}_1) + [\widetilde{\psi}_1,\widetilde{\psi}_2],
\end{equation}
for any $\widetilde{\psi}_1, \widetilde{\psi}_2 \in \Q\langle\langle \widetilde{X}\rangle\rangle$.

\begin{thm}\label{main theorem}
    \begin{enumerate}[label=(\alph*), leftmargin=*]
        \item \label{LA_dmrN} The $\Q$-linear space $\dmr_0^{[N]}$  forms a Lie algebra under the bracket $\langle-,-\rangle^{\widetilde{}}$.
        \item \label{QzetaN_isom} The map $\mathcal{F}$ induces a $\Q(\mu_N)$-Lie algebra isomorphism
        \begin{equation}\label{eq: QzetaN isom}
            \left(\Q(\mu_N) \ \hat{\otimes}_\Q \ \dmr_0^{[N]}, \ \langle-,-\rangle^{\widetilde{}}\right) \xrightarrow{\simeq} \left(\Q(\mu_N) \ \hat{\otimes}_\Q \ \dmr_0^{\mu_N},\  \langle-,-\rangle\right).
        \end{equation}
    \end{enumerate}
\end{thm}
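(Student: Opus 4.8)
The plan is to deduce everything from the structural compatibility lemmas already established, reducing the two assertions to the single claim that $\mathcal{F}$ maps $\dmr_0^{[N]}$ isomorphically (after scalar extension) onto $\dmr_0^{\mu_N}$ and intertwines the two Lie brackets. I would begin with part \ref{QzetaN_isom}, since part \ref{LA_dmrN} then follows by transport of structure. The first step is to verify that $\mathcal{F}$ carries the defining conditions \ref{dmrN_i}--\ref{dmrN_iv} of $\dmr_0^{[N]}$ to the conditions \ref{dmrmuN_i}--\ref{dmrmuN_iv} of $\dmr_0^{\mu_N}$. Conditions \ref{dmrN_ii} and \ref{dmrmuN_ii} correspond under $\mathcal{F}$ by Lemma \ref{lem: Hopf compatibility}\ref{lem: Hopf compatibility shuffle}, which says $\mathcal{F}$ is a morphism of shuffle Hopf algebras; I would spell out that $\widehat{\Delta}_{\tilde{\sh}}(\widetilde{\psi})$ is primitive if and only if $\widehat{\Delta}_\sh(\mathcal{F}(\widetilde{\psi}))$ is. For the linear conditions \ref{dmrN_i} and \ref{dmrN_iii}, I would compute the relevant coefficients of $\mathcal{F}(\widetilde{\psi})$ directly from the formula $\tilde{x}_\alpha \mapsto \sum_m \zeta_N^{-m\iota^{-1}(\alpha)} x_{\zeta_N^m}$, using the orthogonality relation $\sum_j \zeta_N^{Mj} = N\delta_{N\mid M}$ from Lemma \ref{lem:iso_mathcalF} to see that $\sum_\alpha(\widetilde\psi\mid\tilde x_\alpha)=0$ matches $(\mathcal{F}(\widetilde\psi)\mid x_1)=0$, and that condition \ref{dmrN_iii} (symmetry under $\alpha\mapsto-\alpha$) matches condition \ref{dmrmuN_iii} (symmetry under $\zeta\mapsto\zeta^{-1}$).

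The key intermediate step is the harmonic condition \ref{dmrN_iv}. Here the main point is to establish the identity $\mathcal{F}_Y(\widetilde{\psi}_{\tilde{\ast}}) = \mathcal{F}(\widetilde{\psi})_\ast$, relating the two regularized-harmonic objects across $\mathcal{F}$. I would prove this by treating the defining expression of $\widetilde{\psi}_{\tilde{\ast}}$ termwise: the leading term $\pi_{\widetilde Y}\circ\widetilde{\mathbf{q}}^{-1}(\widetilde\psi)$ matches $\pi_Y\circ\mathbf{p}^{-1}(\mathcal{F}(\widetilde\psi))$ because Diagrams \eqref{diag:piY_pitildeY} and \eqref{diag:p_tildeq} of Lemma \ref{lem:diagpi_diagqp} give $\mathcal{F}_Y\circ\pi_{\widetilde Y} = \pi_Y\circ\mathcal{F}$ and $\mathcal{F}\circ\widetilde{\mathbf{q}}=\mathbf{p}\circ\mathcal{F}$ (hence $\mathcal{F}\circ\widetilde{\mathbf{q}}^{-1}=\mathbf{p}^{-1}\circ\mathcal{F}$). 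For the correction series I would apply $\mathcal{F}_Y$ to the double/triple sum defining $\widetilde{\psi}_{\tilde{\ast}}$, use $\mathcal{F}_Y(\tilde y_{1,\iota(b)})=\sum_{m}\zeta_N^{-m b}y_{1,\zeta_N^m}$, and collapse the resulting $b_1,\dots,b_n$-sums via the orthogonality relation so that $\frac{1}{N^{n+1}}\sum_a\sum_{b_1,\dots,b_n}$ with the character weights reproduces exactly the coefficient $\frac{(-1)^{n-1}}{n}(\mathcal{F}(\widetilde\psi)\mid x_0^{n-1}x_1)\,y_{1,1}^n$ appearing in the definition of $\mathcal{F}(\widetilde\psi)_\ast$. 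Once $\mathcal{F}_Y(\widetilde\psi_{\tilde\ast})=\mathcal{F}(\widetilde\psi)_\ast$ is in hand, Lemma \ref{lem: Hopf compatibility}\ref{lem: Hopf compatibility stuffle} (that $\mathcal{F}_Y$ is a harmonic Hopf morphism) immediately translates \ref{dmrN_iv} into \ref{dmrmuN_iv}. This shows $\mathcal{F}$ restricts to a $\Q(\mu_N)$-linear bijection $\Q(\mu_N)\hat\otimes_\Q\dmr_0^{[N]}\to\Q(\mu_N)\hat\otimes_\Q\dmr_0^{\mu_N}$.

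It remains to check that $\mathcal{F}$ intertwines the brackets, i.e. $\mathcal{F}(\langle\widetilde\psi_1,\widetilde\psi_2\rangle^{\tilde{}})=\langle\mathcal{F}(\widetilde\psi_1),\mathcal{F}(\widetilde\psi_2)\rangle$. Expanding both brackets via \eqref{eq: bracket for dmr[N]} and \eqref{eq: bracket for dmr}, the ordinary-commutator terms $[\widetilde\psi_1,\widetilde\psi_2]$ match because $\mathcal{F}$ is an algebra morphism, while the derivation terms match by Proposition \ref{prop: commutativity for d}, which gives precisely $\mathcal{F}\circ\tilde d_{\widetilde\psi}=d_{\mathcal{F}(\widetilde\psi)}\circ\mathcal{F}$; applying this with $\widetilde\psi=\widetilde\psi_1$ to $\widetilde\psi_2$ and vice versa handles $\tilde d_{\widetilde\psi_1}(\widetilde\psi_2)$ and $\tilde d_{\widetilde\psi_2}(\widetilde\psi_1)$. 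This establishes \ref{QzetaN_isom}. Finally, for \ref{LA_dmrN}, since $(\dmr_0^{\mu_N},\langle-,-\rangle)$ is a Lie algebra and $\mathcal{F}$ is a bracket-preserving linear isomorphism after the faithfully flat base change $\Q\hookrightarrow\Q(\mu_N)$, the bracket $\langle-,-\rangle^{\tilde{}}$ is closed on $\dmr_0^{[N]}$ and satisfies the Jacobi identity; I would note that closure and the Lie axioms descend from $\Q(\mu_N)\hat\otimes_\Q\dmr_0^{[N]}$ to $\dmr_0^{[N]}$ because the bracket is defined over $\Q$ and the inclusion $\dmr_0^{[N]}\hookrightarrow\Q(\mu_N)\hat\otimes_\Q\dmr_0^{[N]}$ is injective. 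The main obstacle I anticipate is the bookkeeping in the harmonic-correction computation of the second paragraph: carefully managing the nested character sums and the normalization $1/(n\,N^{n+1})$ so that they collapse to the clean single coefficient $y_{1,1}^n$ is where an error is most likely to hide, and it is the one place where the nontrivial structure of the regularization enters.
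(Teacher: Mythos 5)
Your proposal is correct and follows essentially the same route as the paper's proof: part \ref{QzetaN_isom} by matching conditions (i)--(iv) under $\mathcal{F}$ (shuffle condition via Lemma \ref{lem: Hopf compatibility}, the linear conditions by explicit coefficient identities, and the harmonic condition via the key identity $\mathcal{F}_Y(\widetilde{\psi}_{\tilde{\ast}}) = \mathcal{F}(\widetilde{\psi})_\ast$ proved with Lemma \ref{lem:diagpi_diagqp} and the collapse of the character sums), with bracket compatibility from Proposition \ref{prop: commutativity for d}. Part \ref{LA_dmrN} is then deduced from part \ref{QzetaN_isom} exactly as you argue, the paper phrasing the descent simply as the observation that $\tilde{d}_{\widetilde{\psi}}$ is by construction a derivation of $\Q\langle\langle\widetilde{X}\rangle\rangle$, so the bracket is defined over $\Q$.
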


\begin{proof}
    The claim \ref{LA_dmrN} follows from the claim \ref{QzetaN_isom} since for any $\widetilde{\psi} \in \Q\langle\langle\widetilde{X}\rangle\rangle$, the map $\tilde{d}_{\widetilde{\psi}}$ is, by definition, a derivation of $\Q\langle\langle\widetilde{X}\rangle\rangle\subset
    \Q(\mu_N)\langle\langle\widetilde{X}\rangle\rangle$.
    We now proceed to prove claim \ref{QzetaN_isom}. \newline
    One immediately checks that condition \ref{dmrN_i} of Definition \ref{defn:Congruent double shuffle Lie algebra} corresponds to condition \ref{dmrmuN_i} of Definition \ref{dmrmuN} under the isomorphism ${\mathcal F}$. In addition, thanks to Lemma \ref{lem: Hopf compatibility}.\ref{lem: Hopf compatibility shuffle}, condition \ref{dmrN_ii} of Definition \ref{defn:Congruent double shuffle Lie algebra} corresponds to condition \ref{dmrmuN_ii} of Definition \ref{dmrmuN} under the isomorphism ${\mathcal F}$. \newline
    The equivalence between condition \ref{dmrN_iii} of Definition \ref{defn:Congruent double shuffle Lie algebra} and condition \ref{dmrmuN_iii} of Definition \ref{dmrmuN} is established through the following identities
    \[
        x_{\zeta_N^m} - x_{\zeta_N^{-m}} = \frac{1}{N} \sum_{a=1}^N \zeta_N^{am} {\mathcal F}(\tilde x_{\iota(a)}-\tilde x_{-\iota(a)})
    \]
    and
    \[
        \tilde x_\alpha - \tilde x_{-\alpha} = \sum_{m=1}^N \zeta_N^{-m\iota^{-1}(\alpha)} {\mathcal F}^{-1}(x_{\zeta_N^{m}}- x_{\zeta_N^{-m}}),
    \]
    for $m \in \llbracket 1, N\rrbracket$ and $\alpha \in \Z / N \Z$. The equivalence between condition \ref{dmrN_iv} of Definition \ref{defn:Congruent double shuffle Lie algebra} and condition \ref{dmrmuN_iv} of Definition \ref{dmrmuN} follows from the subsequent arguments. Let $\psi = {\mathcal F}(\widetilde\psi)$ with $\widetilde\psi \in \Q\langle\langle\widetilde{X}\rangle\rangle$. Then we have
    \begin{align*}
        {\mathcal F}_Y(\widetilde\psi_\ast) & ={\mathcal F}_Y\Bigg(\pi_{\widetilde{Y}} \circ \widetilde{\mathbf{q}}^{-1}(\widetilde{\psi}) + \sum_{n \geq 2} \sum_{a=1}^N \sum_{b_1=1}^N \cdots \sum_{b_n=1}^N \frac{(-1)^{n-1}}{n \ N^{n+1}} \big(\widetilde{\psi} \mid \tilde{x}^{n-1} \tilde{x}_{\iota(a)}\big) \  \tilde{y}_{1,\iota(b_1)} \cdots \tilde{y}_{1,\iota(b_n)}\Bigg) \\
        & = \pi_Y \circ \mathbf{p}^{-1}(\psi) + \sum_{n \geq 2} \frac{(-1)^{n-1}}{n} \Big({\widetilde\psi} \bigm| \frac{1}{N}\sum_{a=1}^N\tilde{x}^{n-1} \tilde{x}_{\iota(a)}\Big) y_{1,1}^n \\
        & = \pi_Y \circ \mathbf{p}^{-1}(\psi) + \sum_{n \geq 2} \frac{(-1)^{n-1}}{n} \left(\tilde{\psi} \mid {\mathcal F}^{-1}({x}_0^{n-1} {x}_1)\right) y_{1,1}^n \\
        & = \pi_Y \circ \mathbf{p}^{-1}(\psi) + \sum_{n \geq 2}\frac{(-1)^{n-1}}{n} \left({\mathcal F}({\psi}) \mid {x}_0^{n-1} {x}_1\right) y_{1,1}^n = \psi_\ast,
    \end{align*}
    where the second equality follows from Lemma \ref{lem:diagpi_diagqp}. By Lemma \ref{lem: Hopf compatibility} \ref{lem: Hopf compatibility stuffle}, we establish the wanted equivalence under the map ${\mathcal F}_Y$. \newline
    The compatibility of Lie brackets is verified as follows: set $\psi_1 = {\mathcal F}(\widetilde{\psi}_1)$ and $\psi_2 = {\mathcal F}(\widetilde{\psi}_2)$ with $\widetilde{\psi}_1, \widetilde{\psi}_2 \in \dmr_0^{[N]}$.
    Then by Proposition \ref{prop: commutativity for d}, we have
    \begin{align*}
        \langle \widetilde{\psi}_1, \widetilde{\psi}_2 \rangle^{\tilde{}} 
        & = \tilde{d}_{\widetilde{\psi}_1}(\widetilde{\psi}_2) - \tilde{d}_{\widetilde{\psi}_2}(\widetilde{\psi}_1) + [\widetilde{\psi}_1,\widetilde{\psi}_2] \\
        & = {\mathcal F}^{-1}({d}_{{\mathcal F}(\widetilde{\psi}_1)}({\mathcal F}(\widetilde{\psi}_2))) - {\mathcal F}^{-1}({d}_{{\mathcal F}(\widetilde{\psi}_2)}({\mathcal F}(\widetilde{\psi}_1))) + {\mathcal F}^{-1}([\psi_1,\psi_2]) \\
        & = {\mathcal F}^{-1}\left({d}_{{\psi_1}}({\psi_2}) - {d}_{{\psi_2}}({\psi_1}) + [{\psi_1},{\psi_2}]\right) \\
        & = {\mathcal F}^{-1}\left( \langle {\psi_1}, {\psi_2} \rangle\right) = {\mathcal F}^{-1}( \langle {{\mathcal F}(\tilde \psi_1)}, {{\mathcal F}(\tilde \psi_2)} \rangle).
    \end{align*}
\end{proof}

\begin{rem}
Building upon the work of \cite{BT17, YZ15}, Kanno \cite{K24} investigated double shuffle relations for level $N$ multiple Eisenstein series, 
with $N$-CMZVs as Fourier expansion constant terms. His explicit presentation of Goncharov coproduct \cite[Proposition 4.15]{K24} may offer 
an alternative view on the Lie bracket formula \eqref{eq: bracket for dmr[N]}. We anticipate that the Lie algebra $\dmr_0^{[N]}$ will contribute  to the study of algebraic structure of level $N$ multiple Eisenstein series.
\end{rem}

\subsection{\texorpdfstring{$G_N$}{GN}-invariance characterization theorem}
\noindent For each $\gamma \in (\mathbb{Z}/N\mathbb{Z})^\times$, we define $\delta_\gamma$ to be the algebra automorphism of $\Q\langle\langle X\rangle\rangle$ given by
\begin{align}\label{eq: auto delta}
    x_0 \mapsto x_0, \text{ and } \, x_{\zeta} \mapsto x_{\zeta^{\iota^{-1}(\gamma)}}, \text{ for } \zeta \in \mu_N.
\end{align}
We also define $\widetilde{\delta}_\gamma$ to be the algebra automorphism of $\mathbb{Q}\langle\langle\widetilde{X}\rangle\rangle$ given by
\begin{align}\label{eq: auto tildedelta}
        \tilde{x} \mapsto \tilde{x}, \text{ and } \, \tilde{x}_{\alpha} \mapsto \tilde{x}_{\gamma\alpha}, \text{ for } \alpha \in \Z/N\Z.
\end{align}
These correspondences induce actions of $(\mathbb{Z}/N\mathbb{Z})^\times$ on the algebras $\mathbb{Q}\langle\langle{X}\rangle\rangle$ and $\mathbb{Q}\langle\langle\widetilde{X}\rangle\rangle$ by $\gamma \mapsto \delta_\gamma$ and $\gamma \mapsto \widetilde{\delta}_\gamma$ respectively. We denote by $\sigma \mapsto \Delta_\sigma$ (resp. $\sigma \mapsto \widetilde{\Delta}_\sigma$) the action of $G_N$ on $\Q(\mu_N)\langle\langle{X}\rangle\rangle \simeq \Q(\mu_N) \hat{\otimes} \Q\langle\langle{X}\rangle\rangle$ (resp. on $\Q(\mu_N)\langle\langle\widetilde{X}\rangle\rangle \simeq \Q(\mu_N) \hat{\otimes} \Q\langle\langle\widetilde{X}\rangle\rangle$)
where $\sigma$ acts on the first component via the Galois action and on the second component via the $(\Z/N\Z)^\times$-action given in \eqref{eq: auto delta} (resp. \eqref{eq: auto tildedelta}) thanks to the isomorphism \eqref{eq:natural identification}.

\noindent We now consider the linear map $\mathcal{F} : \Q(\mu_N)\langle\langle\widetilde{X}\rangle\rangle \to \Q(\mu_N)\langle\langle{X}\rangle\rangle$ defined in Lemma \ref{lem:iso_mathcalF}, and establish the following statement which expresses equivariance properties of $\mathcal{F}$ with respect to Galois actions:
\begin{lem}\label{lem:galois_action_compatibility}
For $\sigma \in G_N$, the following equalities hold:
\begin{equation}\label{eq:mathcalF and sigma 1}
    \mathcal{F} \circ (\sigma \otimes \mathrm{id}_{\Q\langle\langle\widetilde{X}\rangle\rangle}) = \Delta_\sigma \circ \mathcal{F},
\end{equation}
and
\begin{equation}\label{eq:mathcalF and sigma 2}
    \mathcal{F} \circ \widetilde{\Delta}_\sigma = (\sigma \otimes \mathrm{id}_{\Q\langle\langle{X}\rangle\rangle}) \circ \mathcal{F},
\end{equation}
where $\sigma \otimes \mathrm{id}_{\Q\langle\langle{X}\rangle\rangle}$ (resp. $\sigma \otimes \mathrm{id}_{\Q\langle\langle\widetilde{X}\rangle\rangle}$) denotes the action on $\Q(\mu_N)\langle\langle{X}\rangle\rangle \simeq \Q(\mu_N) \hat{\otimes} \Q\langle\langle{X}\rangle\rangle$ (resp. $\Q(\mu_N)\langle\langle\widetilde{X}\rangle\rangle \simeq \Q(\mu_N) \hat{\otimes} \Q\langle\langle\widetilde{X}\rangle\rangle$) such that $\sigma$ acts only on the first component via the Galois action.
\end{lem}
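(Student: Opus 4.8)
The plan is to reduce both identities to a verification on the topological algebra generators $\tilde{x}$ and $\tilde{x}_\alpha$ ($\alpha \in \Z/N\Z$). First I would record the relevant structural fact: while $\mathcal{F}$ is $\Q(\mu_N)$-linear, the maps $\sigma \otimes \mathrm{id}$, $\Delta_\sigma$ and $\widetilde{\Delta}_\sigma$ are only $\sigma$-semilinear, in the sense that they twist the scalars in $\Q(\mu_N)$ by $\sigma$ while remaining $\Q$-algebra homomorphisms. Consequently each of the four composites appearing in \eqref{eq:mathcalF and sigma 1} and \eqref{eq:mathcalF and sigma 2} is again a continuous $\sigma$-semilinear $\Q$-algebra homomorphism. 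Since two such maps carrying the same twist $\sigma$ agree as soon as they agree on the algebra generators, it suffices to evaluate both sides of each equality on $\tilde{x}$ and on $\tilde{x}_\alpha$. The generator $\tilde{x}$ is immediate: every map in sight fixes $\tilde{x}$ (resp.\ $x_0 = \mathcal{F}(\tilde{x})$), so both sides return $x_0$.

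Next I would fix $k \in \llbracket 1, N-1\rrbracket$ with $\gcd(k,N)=1$ and $\sigma(\zeta_N) = \zeta_N^k$, so that $\gamma = \iota(k)$ under the identification \eqref{eq:natural identification}, and treat \eqref{eq:mathcalF and sigma 1} on $\tilde{x}_\alpha$. Since $\sigma \otimes \mathrm{id}$ fixes $\tilde{x}_\alpha$, the left-hand side is $\mathcal{F}(\tilde{x}_\alpha) = \sum_{m=1}^N \zeta_N^{-m\iota^{-1}(\alpha)} x_{\zeta_N^m}$. For the right-hand side, using $\Delta_\sigma = \sigma \otimes \delta_\gamma$ and $\delta_\gamma(x_{\zeta_N^m}) = x_{\zeta_N^{mk}}$, applying $\Delta_\sigma$ to $\mathcal{F}(\tilde{x}_\alpha)$ produces $\sum_{m=1}^N \zeta_N^{-mk\iota^{-1}(\alpha)} x_{\zeta_N^{mk}}$. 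The substitution $m \mapsto mk \bmod N$ is a bijection of $\llbracket 1, N\rrbracket$ because $\gcd(k,N)=1$, and it identifies this sum with $\mathcal{F}(\tilde{x}_\alpha)$, yielding the claimed equality.

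For \eqref{eq:mathcalF and sigma 2} on $\tilde{x}_\alpha$, I would use $\widetilde{\Delta}_\sigma = \sigma \otimes \widetilde{\delta}_\gamma$, so the left-hand side is $\mathcal{F}(\tilde{x}_{\gamma\alpha}) = \sum_{m=1}^N \zeta_N^{-m\iota^{-1}(\gamma\alpha)} x_{\zeta_N^m}$, while the right-hand side, with $\sigma \otimes \mathrm{id}$ twisting only coefficients, is $\sum_{m=1}^N \zeta_N^{-mk\iota^{-1}(\alpha)} x_{\zeta_N^m}$. These agree term by term through the congruence $\iota^{-1}(\gamma\alpha) \equiv \iota^{-1}(\gamma)\,\iota^{-1}(\alpha) = k\,\iota^{-1}(\alpha) \pmod{N}$, which makes the two exponents of $\zeta_N$ equal.

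The computations are elementary; the two points that demand care are the legitimacy of the reduction to generators once the base field is twisted by $\sigma$ (the semilinearity bookkeeping) and the compatibility $\iota^{-1}(\gamma\alpha) \equiv k\,\iota^{-1}(\alpha) \pmod N$ of the multiplicative $(\Z/N\Z)^\times$-action with the representative map $\iota^{-1}$. I expect the main risk of error to lie in keeping the two parallel descriptions of the action consistent — the automorphism $\delta_\gamma$ acting on the roots-of-unity exponents of the $x_\zeta$ versus $\widetilde{\delta}_\gamma$ acting on the $\Z/N\Z$-labels of the $\tilde{x}_\alpha$ — but no conceptual obstacle arises beyond this indexing.
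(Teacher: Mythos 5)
Your proposal is correct and follows essentially the same route as the paper: reduce to the generators $\tilde{x}$, $\tilde{x}_\alpha$ and compute directly, with the key point being the identity $\sigma\bigl(\zeta_N^{-m\iota^{-1}(\alpha)}\bigr) = \zeta_N^{-m\iota^{-1}(\gamma\alpha)}$ coming from $\iota^{-1}(\gamma\alpha) \equiv k\,\iota^{-1}(\alpha) \pmod N$. The only differences are cosmetic: the paper proves \eqref{eq:mathcalF and sigma 2} explicitly and declares \eqref{eq:mathcalF and sigma 1} ``similar'' (you instead verify it via the reindexing $m \mapsto mk \bmod N$), and it handles the scalar twisting by evaluating on elements $r\tilde{x}_\alpha$ rather than by your explicit semilinearity bookkeeping, which is a slightly cleaner justification of the reduction to generators.
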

\begin{proof}
    We will prove equality \eqref{eq:mathcalF and sigma 2}. Equality \eqref{eq:mathcalF and sigma 1} can be proven in a similar manner. \linebreak
    First, equality \eqref{eq:mathcalF and sigma 2} is immediate for $\tilde{x}$. Next, let  $z = r \tilde{x}_\alpha$ with $r \in \Q(\mu_N)$ and $\alpha \in \Z/N\Z$. Let $\gamma$ be an element in $ (\Z/N\Z)^\times$ which corresponds to $\sigma$ under the identification \eqref{eq:natural identification}.
    Then 
    \begin{align*}        
        \mathcal{F}((\widetilde{\Delta}_\sigma(z)) & = \mathcal{F}((\sigma \otimes \widetilde{\delta}_\sigma)(r \tilde{x}_\alpha))
        = \mathcal{F}(\sigma(r) \tilde{x}_{\gamma \alpha}) = \sigma(r) \mathcal{F}(\tilde{x}_{\gamma \alpha}) = \sigma(r) \sum_{m=1}^N \zeta_N^{-m \iota^{-1}(\gamma \alpha)} x_{\zeta_N^m} \\
        & = \sigma(r) \sum_{m=1}^N \sigma(\zeta_N^{-m\iota^{-1}(\alpha)}) x_{\zeta_N^m} = (\sigma \otimes \mathrm{id}) \left(r \sum_{m=1}^N \zeta_N^{-m \iota^{-1}(\alpha)} x_{\zeta_N^m}\right)
        = (\sigma \otimes \mathrm{id}) (\mathcal{F}(r \tilde{x}_{\alpha})) \\
        & = (\sigma \otimes \mathrm{id})(\mathcal{F}(z)).
    \end{align*}
    Finally, since $\mathcal{F}$ is an algebra morphism, equality \eqref{eq:mathcalF and sigma 2} follows for all elements in $\Q(\mu_N)\langle\langle\widetilde{X}\rangle\rangle$.
\end{proof}

\begin{prop} \label{prop:dmr_stability}
The subspace $\dmr_0^{[N]}$  (resp. $\dmr_0^{\mu_N}$)
is stable under the action of $(\Z/N\Z)^\times$ on 
$\Q\langle\langle\widetilde{X}\rangle\rangle$
(resp. $\Q\langle\langle X\rangle\rangle$).
\end{prop}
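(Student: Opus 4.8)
The plan is to prove both assertions by verifying, in each case, that every defining condition of the Lie algebra is preserved by the relevant automorphism: conditions \ref{dmrmuN_i}--\ref{dmrmuN_iv} of Definition \ref{dmrmuN} under $\delta_\gamma$, and conditions \ref{dmrN_i}--\ref{dmrN_iv} of Definition \ref{defn:Congruent double shuffle Lie algebra} under $\widetilde{\delta}_\gamma$. I would treat $\dmr_0^{[N]}$ under $\widetilde{\delta}_\gamma$ in detail, the case of $\dmr_0^{\mu_N}$ under $\delta_\gamma$ being entirely parallel. The basic observation is that $\widetilde{\delta}_\gamma$ merely permutes the letters of $\widetilde{X}$ (it fixes $\tilde{x}$ and sends $\tilde{x}_\alpha \mapsto \tilde{x}_{\gamma\alpha}$), so its effect on coefficients is $(\widetilde{\delta}_\gamma(\widetilde{\psi}) \mid \tilde{w}) = (\widetilde{\psi} \mid \widetilde{\delta}_\gamma^{-1}(\tilde{w}))$ for each word $\tilde{w}$, with $\widetilde{\delta}_\gamma^{-1} = \widetilde{\delta}_{\gamma^{-1}}$ since $\gamma \mapsto \widetilde{\delta}_\gamma$ is a genuine $(\Z/N\Z)^\times$-action.

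First I would dispose of the three ``linear'' conditions. Condition \ref{dmrN_i} follows from $\widetilde{\delta}_\gamma^{-1}(\tilde{x}) = \tilde{x}$ together with the fact that $\alpha \mapsto \gamma^{-1}\alpha$ is a bijection of $\Z/N\Z$, which leaves $\sum_{\alpha}(\widetilde{\psi} \mid \tilde{x}_\alpha)$ invariant after reindexing. Condition \ref{dmrN_iii} is immediate, since $\widetilde{\delta}_\gamma^{-1}(\tilde{x}_\alpha - \tilde{x}_{-\alpha}) = \tilde{x}_{\gamma^{-1}\alpha} - \tilde{x}_{-\gamma^{-1}\alpha}$ pairs to zero against $\widetilde{\psi}$ by the same condition for $\widetilde{\psi}$ at the index $\gamma^{-1}\alpha$. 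For condition \ref{dmrN_ii} I would note that $\widetilde{\delta}_\gamma$ sends each primitive generator to another primitive generator, whence $\widehat{\Delta}_{\tilde{\sh}} \circ \widetilde{\delta}_\gamma = (\widetilde{\delta}_\gamma \otimes \widetilde{\delta}_\gamma) \circ \widehat{\Delta}_{\tilde{\sh}}$, so group-likeness is preserved.

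The substantive step, and the one I expect to be the main obstacle, is condition \ref{dmrN_iv}, because it requires controlling the interaction of $\widetilde{\delta}_\gamma$ with both the regularization map $\widetilde{\psi} \mapsto \widetilde{\psi}_{\tilde{\ast}}$ and the harmonic coproduct $\widehat{\Delta}_{\tilde{\ast}}$. I would first record that $\widetilde{\delta}_\gamma$ restricts to the automorphism $\tilde{y}_{k,\alpha} \mapsto \tilde{y}_{k,\gamma\alpha}$ of $\Q\langle\langle\widetilde{Y}\rangle\rangle$, that it commutes with $\pi_{\widetilde{Y}}$ (it preserves the decomposition $\Q\langle\langle\widetilde{X}\rangle\rangle = \Q\langle\langle\widetilde{Y}\rangle\rangle \oplus \Q\langle\langle\widetilde{X}\rangle\rangle\tilde{x}$ since it fixes $\tilde{x}$) and with $\widetilde{\mathbf{q}}$ (because $\gamma(\alpha_j - \alpha_{j+1}) = \gamma\alpha_j - \gamma\alpha_{j+1}$). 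The key point is then the equivariance $(\widetilde{\delta}_\gamma(\widetilde{\psi}))_{\tilde{\ast}} = \widetilde{\delta}_\gamma(\widetilde{\psi}_{\tilde{\ast}})$: the summand $\pi_{\widetilde{Y}} \circ \widetilde{\mathbf{q}}^{-1}$ is equivariant by the above, while for the correction term one reindexes the summation variables $a$ and $b_1, \dots, b_n$ through the bijection induced by $\gamma$ on $\llbracket 1, N\rrbracket$ modulo $N$, using $(\widetilde{\delta}_\gamma\widetilde{\psi} \mid \tilde{x}^{n-1}\tilde{x}_{\iota(a)}) = (\widetilde{\psi} \mid \tilde{x}^{n-1}\tilde{x}_{\gamma^{-1}\iota(a)})$ and $\widetilde{\delta}_\gamma(\tilde{y}_{1,\iota(b_j)}) = \tilde{y}_{1,\gamma\iota(b_j)}$, so that both the scalar coefficients and the $\tilde{y}$-monomials match after relabeling dummy indices. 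Finally, $\widetilde{\delta}_\gamma$ is a Hopf automorphism for $\widehat{\Delta}_{\tilde{\ast}}$ as well, since the color $\alpha$ is simply rescaled to $\gamma\alpha$ in all three factors of the defining formula for $\widehat{\Delta}_{\tilde{\ast}}(\tilde{y}_{k,\alpha})$; applying $\widehat{\Delta}_{\tilde{\ast}}$ to $(\widetilde{\delta}_\gamma\widetilde{\psi})_{\tilde{\ast}} = \widetilde{\delta}_\gamma(\widetilde{\psi}_{\tilde{\ast}})$ and invoking group-likeness of $\widetilde{\psi}_{\tilde{\ast}}$ then yields \ref{dmrN_iv} for $\widetilde{\delta}_\gamma\widetilde{\psi}$. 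The same four verifications, now with $\mathbf{p}$ and $\pi_Y$ in place of $\widetilde{\mathbf{q}}$ and $\pi_{\widetilde{Y}}$, with the simpler correction term $\sum_{n\geq 2}\frac{(-1)^{n-1}}{n}(\psi \mid x_0^{n-1}x_1)y_{1,1}^n$ (observing that $\delta_\gamma$ fixes both $x_0^{n-1}x_1$ and $y_{1,1}$), and with the group-law condition $\zeta_1\zeta_2 = \zeta$ stable under $\zeta \mapsto \zeta^{\iota^{-1}(\gamma)}$, establish the stability of $\dmr_0^{\mu_N}$ under $\delta_\gamma$.
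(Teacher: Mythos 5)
Your proposal is correct and takes the same approach as the paper: the paper's own proof is a two-sentence assertion that each of the defining conditions (i)--(iv) is preserved under the respective actions, which is exactly the verification you carry out, only in full detail (including the key equivariance $(\widetilde{\delta}_\gamma\widetilde{\psi})_{\tilde{\ast}} = \widetilde{\delta}_\gamma(\widetilde{\psi}_{\tilde{\ast}})$ and the compatibility of $\widetilde{\delta}_\gamma$ with $\pi_{\widetilde{Y}}$, $\widetilde{\mathbf{q}}$ and the two coproducts, which the paper leaves to the reader). Your write-up is a valid expansion of the paper's sketch, with no gaps.
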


\begin{proof}
    The invariance of these subspaces follows directly from an examination of their defining equations (i)-(iv). One can verify that each of these equations is preserved under the respective actions, thus ensuring the stability of the subspaces.
\end{proof}

\begin{cor}
    \label{cor: action of GN on QmuNxdmr0}
    The subspace $\Q(\mu_N) \ \hat{\otimes}_\Q \ \dmr_0^{[N]}$  (resp. $\Q(\mu_N) \ \hat{\otimes}_\Q \ \dmr_0^{\mu_N}$) is stable under the action $\sigma \mapsto \widetilde{\Delta}_\sigma$ (resp. $\sigma \mapsto \Delta_\sigma$) of $G_N$ on $\Q(\mu_N)\langle\langle\widetilde{X}\rangle\rangle$ (resp. $\Q(\mu_N)\langle\langle X\rangle\rangle$).
\end{cor}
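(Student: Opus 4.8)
The plan is to reduce the statement to Proposition \ref{prop:dmr_stability} by exploiting the fact that the $G_N$-action decomposes as a tensor product of the Galois action on the scalars and the $(\Z/N\Z)^\times$-action on the algebra. I treat the case of $\dmr_0^{[N]}$; the case of $\dmr_0^{\mu_N}$ is handled identically, with $\widetilde{\Delta}_\sigma$ and $\widetilde{\delta}_\gamma$ replaced by $\Delta_\sigma$ and $\delta_\gamma$. By construction of the action, under the identification $\Q(\mu_N)\langle\langle\widetilde{X}\rangle\rangle \simeq \Q(\mu_N) \hat{\otimes}_\Q \Q\langle\langle\widetilde{X}\rangle\rangle$ one has $\widetilde{\Delta}_\sigma = \sigma \otimes \widetilde{\delta}_\gamma$, where $\gamma \in (\Z/N\Z)^\times$ corresponds to $\sigma$ under the identification \eqref{eq:natural identification}.

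First I would observe that, since $\Q(\mu_N)$ is finite-dimensional over $\Q$, no completion is needed on the left tensor factor: fixing a $\Q$-basis $\{\omega_j\}_j$ of $\Q(\mu_N)$, every element $z \in \Q(\mu_N) \hat{\otimes}_\Q \dmr_0^{[N]}$ may be written uniquely as a finite sum $z = \sum_j \omega_j \otimes \widetilde{\psi}_j$ with $\widetilde{\psi}_j \in \dmr_0^{[N]}$. Applying the action factorwise gives
\begin{equation*}
    \widetilde{\Delta}_\sigma(z) = \sum_j \sigma(\omega_j) \otimes \widetilde{\delta}_\gamma(\widetilde{\psi}_j),
\end{equation*}
where $\sigma(\omega_j) \in \Q(\mu_N)$ because $\sigma$ is a $\Q$-automorphism of $\Q(\mu_N)$, and $\widetilde{\delta}_\gamma(\widetilde{\psi}_j) \in \dmr_0^{[N]}$ by Proposition \ref{prop:dmr_stability}.

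To conclude that the right-hand side indeed lies in $\Q(\mu_N) \hat{\otimes}_\Q \dmr_0^{[N]}$, I would re-expand each $\sigma(\omega_j)$ in the chosen basis as $\sigma(\omega_j) = \sum_k c_{jk}\,\omega_k$ with $c_{jk} \in \Q$, so that $\widetilde{\Delta}_\sigma(z) = \sum_k \omega_k \otimes \big(\sum_j c_{jk}\,\widetilde{\delta}_\gamma(\widetilde{\psi}_j)\big)$. Since $\dmr_0^{[N]}$ is a $\Q$-linear subspace of $\Q\langle\langle\widetilde{X}\rangle\rangle$, each coefficient $\sum_j c_{jk}\,\widetilde{\delta}_\gamma(\widetilde{\psi}_j)$ lies in $\dmr_0^{[N]}$, which is precisely the assertion that $\widetilde{\Delta}_\sigma(z) \in \Q(\mu_N) \hat{\otimes}_\Q \dmr_0^{[N]}$.

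I do not anticipate any genuine obstacle: the entire substantive content is packaged in Proposition \ref{prop:dmr_stability}, and this corollary is merely the observation that tensoring a $(\Z/N\Z)^\times$-stable $\Q$-subspace with the Galois-stable ring $\Q(\mu_N)$ yields a subspace stable under the diagonal $G_N$-action. The only point requiring minor care is the bookkeeping of the completed tensor product, but this is harmless precisely because the Galois factor $\Q(\mu_N)$ is finite-dimensional over $\Q$, so every element is a finite combination over a fixed basis and the two actions can be applied factorwise without any convergence concern.
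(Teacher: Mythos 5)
Your proposal is correct and follows essentially the same route as the paper, which simply notes that the claim follows immediately from Proposition \ref{prop:dmr_stability} together with the definition of the action $\widetilde{\Delta}_\sigma = \sigma \otimes \widetilde{\delta}_\gamma$ (resp. $\Delta_\sigma = \sigma \otimes \delta_\gamma$). Your write-up merely makes explicit the factorwise bookkeeping over a $\Q$-basis of $\Q(\mu_N)$ that the paper leaves implicit.
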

\begin{proof}
    This follows immediately from Proposition \ref{prop:dmr_stability} and by definition of the action $\sigma \mapsto \widetilde{\Delta}_\sigma$ (resp. $\sigma \mapsto \Delta_\sigma$).  
\end{proof}

\noindent The following theorem establishes our $G_N$-invariance characterization result for the double shuffle Lie algebras:
\begin{thm}\label{Galois Descent theorem}
    Denote by $(\Q(\mu_N) \ \hat{\otimes}_\Q \ \dmr_0^{[N]})^{\widetilde{\Delta}_{G_N}}$ (resp. $(\Q(\mu_N) \ \hat{\otimes}_\Q \ \dmr_0^{\mu_N})^{\Delta_{G_N}}$) the invariant subspace under the action of $G_N$ described in Corollary \ref{cor: action of GN on QmuNxdmr0}. Under the isomorphism ${\mathcal F}$ given in \eqref{eq: QzetaN isom}, the following identities hold:
    \begin{equation}\label{eq:fixed_point_dmr_N}
        (\Q(\mu_N) \ \hat{\otimes}_\Q \ \dmr_0^{[N]})^{\widetilde{\Delta}_{G_N}} \simeq \dmr_0^{\mu_N}
    \end{equation}
    and
    \begin{equation}\label{eq:fixed_point_dmr_muN}
        \dmr_0^{[N]} \simeq (\Q(\mu_N) \ \hat{\otimes}_\Q \ \dmr_0^{\mu_N})^{\Delta_{G_N}}.
    \end{equation}
\end{thm}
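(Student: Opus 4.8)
The plan is to obtain both isomorphisms as two instances of Galois descent, using ${\mathcal F}$ as the comparison isomorphism and feeding in the equivariance relations of Lemma \ref{lem:galois_action_compatibility} together with the Lie algebra isomorphism of Theorem \ref{main theorem}. The single input from Galois theory that I would isolate first is the descent identity: for any $\Q$-subspace $V$ of $\Q\langle\langle X\rangle\rangle$ or $\Q\langle\langle\widetilde{X}\rangle\rangle$, the $G_N$-invariants of $\Q(\mu_N)\,\hat{\otimes}_\Q\,V$ under the action $\sigma \otimes \mathrm{id}$ in which $\sigma$ acts only on the scalar factor coincide with $V$ itself, i.e.
\[
    \left(\Q(\mu_N)\,\hat{\otimes}_\Q\,V\right)^{(\sigma \otimes \mathrm{id})_{\sigma \in G_N}} = \Q(\mu_N)^{G_N}\,\hat{\otimes}_\Q\,V = V,
\]
which rests only on $\Q(\mu_N)^{G_N} = \Q$. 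Concretely I would verify this coefficientwise: writing an element as a series $\sum_w c_w\,w$ over words $w$ with $c_w \in \Q(\mu_N)$, invariance under every $\sigma$ forces $\sigma(c_w)=c_w$, hence $c_w \in \Q$, for each $w$.

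With this in hand, the proof of \eqref{eq:fixed_point_dmr_muN} proceeds as follows. By equality \eqref{eq:mathcalF and sigma 1}, one has ${\mathcal F} \circ (\sigma \otimes \mathrm{id}) = \Delta_\sigma \circ {\mathcal F}$ for every $\sigma \in G_N$, so ${\mathcal F}$ carries $(\sigma \otimes \mathrm{id})_{G_N}$-invariants bijectively onto $\Delta_{G_N}$-invariants. Restricting the isomorphism ${\mathcal F}$ of Theorem \ref{main theorem} from $\Q(\mu_N)\,\hat{\otimes}_\Q\,\dmr_0^{[N]}$ onto $\Q(\mu_N)\,\hat{\otimes}_\Q\,\dmr_0^{\mu_N}$ then yields
\[
    {\mathcal F} : \left(\Q(\mu_N)\,\hat{\otimes}_\Q\,\dmr_0^{[N]}\right)^{(\sigma \otimes \mathrm{id})_{G_N}} \xrightarrow{\simeq} \left(\Q(\mu_N)\,\hat{\otimes}_\Q\,\dmr_0^{\mu_N}\right)^{\Delta_{G_N}},
\]
and the descent identity identifies the source with $\dmr_0^{[N]}$. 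Symmetrically, for \eqref{eq:fixed_point_dmr_N} I would invoke \eqref{eq:mathcalF and sigma 2}, namely ${\mathcal F} \circ \widetilde{\Delta}_\sigma = (\sigma \otimes \mathrm{id}) \circ {\mathcal F}$, so that ${\mathcal F}$ sends $\widetilde{\Delta}_{G_N}$-invariants bijectively onto $(\sigma \otimes \mathrm{id})_{G_N}$-invariants; the same restriction gives
\[
    {\mathcal F} : \left(\Q(\mu_N)\,\hat{\otimes}_\Q\,\dmr_0^{[N]}\right)^{\widetilde{\Delta}_{G_N}} \xrightarrow{\simeq} \left(\Q(\mu_N)\,\hat{\otimes}_\Q\,\dmr_0^{\mu_N}\right)^{(\sigma \otimes \mathrm{id})_{G_N}},
\]
and now the descent identity identifies the target with $\dmr_0^{\mu_N}$. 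Stability of all the subspaces involved, which is what makes these restrictions legitimate, is exactly Corollary \ref{cor: action of GN on QmuNxdmr0}.

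To upgrade these to $\Q$-Lie algebra isomorphisms I would note that ${\mathcal F}$ is a Lie isomorphism by Theorem \ref{main theorem} and that in each case it maps the invariant $\Q$-subspace bijectively onto something I have identified with a genuine Lie algebra ($\dmr_0^{[N]}$ or $\dmr_0^{\mu_N}$); being the image, respectively preimage, of a Lie subalgebra under a Lie isomorphism, each invariant subspace is automatically a Lie subalgebra and the restriction of ${\mathcal F}$ is a Lie isomorphism. In particular no separate check that $\Delta_\sigma$ and $\widetilde{\Delta}_\sigma$ act by Lie automorphisms is needed. I expect the only genuinely delicate point to be the descent identity in the \emph{completed} tensor product: one must ensure that passage to $G_N$-invariants commutes with the completion, i.e. that an invariant formal series has each of its infinitely many word-coefficients already fixed by $G_N$. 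This is where I would be most careful, although it reduces immediately to the finite statement $\Q(\mu_N)^{G_N}=\Q$ applied to each coefficient separately.
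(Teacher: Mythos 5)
Your proposal is correct and follows essentially the same route as the paper's proof: both rest on the equivariance identities of Lemma \ref{lem:galois_action_compatibility}, the isomorphism of Theorem \ref{main theorem}, and the classical descent fact $\Q(\mu_N)^{G_N}=\Q$ applied coefficientwise. The only difference is organizational — the paper chases elements through both inclusions (using \eqref{eq:mathcalF and sigma 2} to show ${\mathcal F}$ maps invariants into $\dmr_0^{\mu_N}$ and ${\mathcal F}^{-1}$ maps $\dmr_0^{\mu_N}$ into the invariants), leaving the descent identity implicit, whereas you isolate it as an explicit lemma and then transport invariants through ${\mathcal F}$; the mathematical content is identical.
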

\begin{proof}
    We will prove identity \eqref{eq:fixed_point_dmr_N}. Identity \eqref{eq:fixed_point_dmr_muN} can be proven in a similar manner. \linebreak Let $f \in (\Q(\mu_N) \ \hat{\otimes}_\Q \ \dmr_0^{[N]})^{\widetilde{\Delta}_{G_N}}$. By Theorem \ref{main theorem} (b), we have ${\mathcal F}(f)\in \Q(\mu_N) \ \hat{\otimes}_\Q \ \dmr_0^{\mu_N}$. Moreover, by equality \eqref{eq:mathcalF and sigma 2} of Lemma \ref{lem:galois_action_compatibility}, we have
    $$
    \mathcal{F}(f) = \mathcal{F}\left(\widetilde{\Delta}_\sigma(f)\right) = (\sigma \otimes \mathrm{id}_{\Q\langle\langle{X}\rangle\rangle}) \left(\mathcal{F}(f)\right).
    $$
    Thus proving that ${\mathcal F}(f)\in\dmr_0^{\mu_N}$.
    Conversely, assume $g\in \dmr_0^{\mu_N}\subset \Q(\mu_N) \ \hat{\otimes}_\Q \ \dmr_0^{\mu_N}$. Then by Theorem \ref{main theorem} (b), ${\mathcal F}^{-1}(g)\in \Q(\mu_N) \ \hat{\otimes}_\Q \ \dmr_0^{[N]}$. From equality \eqref{eq:mathcalF and sigma 2} of Lemma \ref{lem:galois_action_compatibility}, we have
    $$
        \mathcal{F}^{-1}(g) = \mathcal{F}^{-1}\left((\sigma \otimes \mathrm{id}_{\Q\langle\langle{X}\rangle\rangle})(g)\right)
        = \widetilde{\Delta}_\sigma \circ \mathcal{F}^{-1}(g).
    $$
    Hence we have $\mathcal{F}^{-1}(g) \in (\Q(\mu_N) \ \hat{\otimes}_\Q \ \dmr_0^{[N]})^{\widetilde{\Delta}_{G_N}}$.
\end{proof}


\noindent Our investigation addresses a fundamental question concerning the descent of the isomorphism presented in equation \eqref{eq: QzetaN isom}. Specifically, we examine whether this isomorphism, initially defined over the cyclotomic field  $\Q(\mu_N)$, can be realized over the rational number field $\Q$. This inquiry leads us to the following problem:

\begin{prob}
    Are the Lie algebras $\dmr_0^{[N]}$ and $\dmr_0^{\mu_N}$ isomorphic over $\Q$ ?
\end{prob}

\noindent This problem might have potential implications for our understanding of the arithmetic properties of multiple zeta values. 
It may also provide insights into the deeper structure of motivic Lie algebras associated with mixed Tate motives over cyclotomic fields.

\appendix
\section{Distribution relations on \texorpdfstring{$N$}{N}-CMZVs}
In this appendix, we aim to describe explicitly finite and regularized distribution relations among $N$-CMZVs by utilizing the finite and regularized distribution relations among $N$-MPVs.
Those latter are expressed by the following identity (see \cite{Gon98})
\begin{equation}
    \label{eq:dist_NMPV}
    \Li_{(k_1, \dots, k_r)}(\eta_1, \dots, \eta_r) = d^{k_1 + \cdots + k_r - r} \sum_{\substack{\zeta_i^d=\eta_i \\ 1 \leq i \leq r}} \Li_{(k_1, \dots, k_r)}(\zeta_1, \dots, \zeta_r),
\end{equation}
for any divisor $d$ of $N$, where $r, k_1, \dots, k_r$ are positive integers and $\eta_1, \dots, \eta_r$ are $\frac{N}{d}$\textsuperscript{th} roots of unity with $(k_1, \eta_1) \neq (1, 1)$.
In \cite[§2.5.3]{Rac02}, Racinet introduced an algebraic framework to describe these relations as follows: for $d$ a divisor of $N$, set $X_d := \{ x_0, x_\zeta \mid \zeta \in \mu_{\frac{N}{d}} \}$, 
{this is a subset of $X$ thanks to the equality $\mu_{\frac{N}{d}} = \mu_N^d$ of subgroups of $\mu_N$}. Define the algebra morphisms $p^d_\ast$ and $i_d^\ast : \Q\langle\langle{X}\rangle\rangle \to \Q\langle\langle{X_d}\rangle\rangle$ by (see also \cite[§4]{Zha10})
\[\begin{array}{ccc}
    p^d_\ast : \quad \begin{aligned}
        &x_0 \mapsto d \, x_0 \\
        &x_\zeta \mapsto x_{\zeta^d}
    \end{aligned}
    &
    \text{ and }
    &
    i_d^\ast : \quad \begin{aligned}
        &x_0 \mapsto x_0 \\
        &x_\zeta \mapsto \begin{cases} x_\zeta & \text{if } \zeta \in \mu_{\frac{N}{d}} \\ 0 & \text{otherwise}\end{cases}
    \end{aligned}
\end{array}\]

\begin{defn}[{\cite[(3.3.1.3)]{Rac02}}]
    Let $\dmrd_0^{\mu_N}$ be the $\Q$-Lie subalgebra of $\dmr_0^{\mu_N}$ defined by
    \[
        \dmrd_0^{\mu_N} := \Big\{ \psi \in \dmr_0^{\mu_N} \mid \forall d | N, \ p^d_\ast(\psi) = i_d^\ast(\psi) + \sum_{\zeta \in \mu_{\frac{N}{d}}} (\psi \mid x_\zeta) x_1 \Big\}.
    \]
\end{defn}

On the other hand, for a divisor $d$ of $N$, consider the substitution $\iota_d : \llbracket 1, \frac{N}{d}\rrbracket \to \Z/\frac{N}{d}\Z$, which associates to each element of $\llbracket 1, \frac{N}{d}\rrbracket$ its equivalence class. 
{One then checks that there exists a group isomorphism $\nu_d : \Z/\frac{N}{d}\Z \to d \Z/N\Z$ such that the following diagram}
\begin{equation}\label{diag:nu_d}
    \begin{tikzcd}
        & d \Z/N\Z \\
        \llbracket 1, \frac{N}{d} \rrbracket \ar[r, "\iota_d"] \ar[ru, "d \iota"] & \Z/\frac{N}{d}\Z \ar[u, "\nu_d"']
    \end{tikzcd}
\end{equation}
commutes, where $d \iota : m \mapsto d \iota(m)$, for any $m \in \llbracket 1, \frac{N}{d}\rrbracket$. \newline
Set $\widetilde{X}_d := \{ \tilde{x}, \tilde{x}_{\beta} \mid \beta \in \Z/\frac{N}{d}\Z \}$, which one sees as a subset of $X$ thanks to the isomorphism $\nu_d$.

\begin{defn}
    Define the algebra morphism $\mathcal{F}_d : \Q(\mu_N)\langle\langle{\widetilde{X}_d}\rangle\rangle \to \Q(\mu_N)\langle\langle{X_d}\rangle\rangle$ by
    \[
        \tilde{x} \mapsto x_0 \text{ and } \tilde{x}_{\beta} \mapsto \sum_{m=1}^{\frac{N}{d}} \zeta_N^{-m d \iota_d^{-1}(\beta)} x_{\zeta_N^{dm}} \text{ for } \beta \in \Z/\frac{N}{d}\Z. 
    \]
\end{defn}
\noindent By replacing $N$ by $\frac{N}{d}$ in $\mathcal{F}$, one checks that $\mathcal{F}_d$ is an isomorphism with reciprocal given by
\[
    x_0 \mapsto \tilde{x} \text{ and } x_{\zeta_N^{m d}} \mapsto \frac{d}{N} \sum_{a=1}^{\frac{N}{d}} \zeta_N^{a m d} \tilde{x}_{\iota_d(a)} \text{ for } m \in \llbracket 1, \frac{N}{d}\rrbracket. 
\]
\begin{defn}
    Define the algebra morphisms $\tilde{p}^d_\ast$ and $\tilde{i}_d^\ast : \Q\langle\langle\widetilde{X}\rangle\rangle \to \Q\langle\langle\widetilde{X}_d\rangle\rangle$ by
    \[\begin{array}{ccc}
        \tilde{p}^d_\ast : \quad \begin{aligned}
        & \tilde{x} && \mapsto d \, \tilde{x} \\
        & \tilde{x}_\alpha && \mapsto \begin{cases} d \, \tilde{x}_{\nu_d^{-1}(\alpha)} & \text{if } \alpha \in d \Z/N\Z \\ 0 & \text{otherwise}\end{cases}
        \end{aligned}
        &
        \text{ and }
        &
        \tilde{i}_d^\ast : \quad \begin{aligned}
        & \tilde{x} && \mapsto \tilde{x} \\
        & \tilde{x}_\alpha && \mapsto \tilde{x}_{\nu_d^{-1}(d \alpha)}
        \end{aligned}
    \end{array}\]
\end{defn}

\begin{lem} \label{lem:pi_tildepi}
    The following diagrams\footnote{Here, we abusively denote by $\tilde{p}_\ast^d$ and $p_\ast^d$ (resp. $\tilde{i}^\ast_d$ and $i^\ast_d$) the algebra morphisms $\mathrm{id}_{\Q(\mu_N)} \otimes \tilde{p}_\ast^d$ and $\mathrm{id}_{\Q(\mu_N)} \otimes p_\ast^d$ (resp. $\mathrm{id}_{\Q(\mu_N)} \otimes \tilde{i}^\ast_d$ and $\mathrm{id}_{\Q(\mu_N)} \otimes i^\ast_d$).}
    \[\begin{array}{lcr}
        \begin{tikzcd}
            \Q(\mu_N)\langle\langle\widetilde{X}\rangle\rangle \ar[rr, "{\mathcal F}"] \ar[d, "\tilde{p}^d_\ast"'] && \ar[d, "p^d_\ast"] \Q(\mu_N)\langle\langle{X}\rangle\rangle \\
            \Q(\mu_N)\langle\langle \widetilde{X}_d\rangle\rangle \ar[rr, "\mathcal{F}_d"] && \Q(\mu_N)\langle\langle{X}_d\rangle\rangle
        \end{tikzcd}
        &
        \text{ and }
        &
        \begin{tikzcd}
            \Q(\mu_N)\langle\langle\widetilde{X}\rangle\rangle \ar[rr, "{\mathcal F}"] \ar[d, "\tilde{i}_d^\ast"'] && \ar[d, "i_d^\ast"] \Q(\mu_N)\langle\langle{X}\rangle\rangle \\
            \Q(\mu_N)\langle\langle \widetilde{X}_d\rangle\rangle \ar[rr, "\mathcal{F}_d"] && \Q(\mu_N)\langle\langle{X}_d\rangle\rangle
        \end{tikzcd}
    \end{array}\]
    commute.
\end{lem}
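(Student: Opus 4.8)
The plan is to use that all four maps in each square are $\Q(\mu_N)$-algebra morphisms, so it suffices to verify each identity on the algebraic generators $\tilde{x}$ and $\tilde{x}_\alpha$ ($\alpha \in \Z/N\Z$) of $\Q(\mu_N)\langle\langle\widetilde{X}\rangle\rangle$. The generator $\tilde{x}$ is immediate: in the left square both composites send $\tilde{x}$ to $d\,x_0$, and in the right square both send $\tilde{x}$ to $x_0$. All the content therefore lies in evaluating the two composites on $\tilde{x}_\alpha$.

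For the left diagram I would first compute $p^d_\ast \circ \mathcal{F}(\tilde{x}_\alpha) = \sum_{m=1}^N \zeta_N^{-m\iota^{-1}(\alpha)} x_{\zeta_N^{md}}$, using $p^d_\ast(x_{\zeta_N^m}) = x_{\zeta_N^{md}}$. Since $\zeta_N^{md}$ depends only on $m \bmod \tfrac{N}{d}$, I would split $m = m_0 + j\tfrac{N}{d}$ with $m_0 \in \llbracket 1, \frac{N}{d}\rrbracket$ and $j \in \llbracket 0, d-1\rrbracket$, factoring out the inner sum $\sum_{j=0}^{d-1}\bigl(\zeta_N^{N/d}\bigr)^{-j\iota^{-1}(\alpha)}$. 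By the geometric-sum identity used in the proof of Lemma \ref{lem:iso_mathcalF} (now for the primitive $d$-th root $\zeta_N^{N/d}$), this inner sum equals $d$ when $d \mid \iota^{-1}(\alpha)$, i.e. when $\alpha \in d\Z/N\Z$, and $0$ otherwise. This reproduces exactly the vanishing built into $\tilde{p}^d_\ast$, and in the surviving case collapses the $N$-fold sum to $d\sum_{m_0=1}^{N/d}\zeta_N^{-m_0\iota^{-1}(\alpha)}x_{\zeta_N^{m_0 d}}$. Comparing with $\mathcal{F}_d \circ \tilde{p}^d_\ast(\tilde{x}_\alpha) = d\,\mathcal{F}_d(\tilde{x}_{\nu_d^{-1}(\alpha)})$ then requires the index identity $\iota^{-1}(\alpha) = d\,\iota_d^{-1}(\nu_d^{-1}(\alpha))$ for $\alpha \in d\Z/N\Z$, which I would extract from the commuting triangle \eqref{diag:nu_d}: writing $m^\ast = \iota_d^{-1}(\nu_d^{-1}(\alpha))$, commutativity gives $\overline{dm^\ast} = \alpha$, and since $dm^\ast \in \llbracket 1, N\rrbracket$ this forces $\iota^{-1}(\alpha) = dm^\ast$.

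For the right diagram I would compute $i_d^\ast \circ \mathcal{F}(\tilde{x}_\alpha)$, observing that $i_d^\ast$ kills every term with $\zeta_N^m \notin \mu_{\frac{N}{d}} = \mu_N^d$, i.e. with $d \nmid m$; writing the surviving indices as $m = dk$ leaves $\sum_{k=1}^{N/d}\zeta_N^{-dk\iota^{-1}(\alpha)}x_{\zeta_N^{dk}}$. On the other side $\mathcal{F}_d \circ \tilde{i}_d^\ast(\tilde{x}_\alpha) = \mathcal{F}_d(\tilde{x}_{\nu_d^{-1}(d\alpha)})$, and applying \eqref{diag:nu_d} with $\gamma = d\alpha \in d\Z/N\Z$ gives $d\,\iota_d^{-1}(\nu_d^{-1}(d\alpha)) = \iota^{-1}(d\alpha) \equiv d\,\iota^{-1}(\alpha) \pmod N$. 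Multiplying this congruence by $m$ shows the exponents of $\zeta_N$ in the two expressions coincide modulo $N$, so the composites agree term by term.

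The algebra-morphism reductions and the geometric-sum evaluation are routine; the only genuinely delicate point is the bookkeeping of the index maps, namely translating the commutativity of \eqref{diag:nu_d} into the arithmetic relations between $\iota^{-1}$ on $d\Z/N\Z$ and $d\,\iota_d^{-1}\circ\nu_d^{-1}$ on $\Z/\tfrac{N}{d}\Z$, and checking that all the resulting identities of exponents hold modulo $N$ so that the corresponding powers of $\zeta_N$ truly coincide. I expect this identification of indices to be the main obstacle.
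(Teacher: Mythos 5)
Your proposal is correct and follows essentially the same route as the paper's proof: reduction to the generators $\tilde{x}, \tilde{x}_\alpha$, the block decomposition $m = m_0 + j\tfrac{N}{d}$ with the geometric-sum dichotomy according to whether $\alpha \in d\Z/N\Z$, and the translation of the commuting triangle \eqref{diag:nu_d} into the index identity $\iota^{-1}(\alpha) = d\,\iota_d^{-1}(\nu_d^{-1}(\alpha))$. The only difference is that you also carry out the right-hand square explicitly (via the observation that $i_d^\ast$ kills the terms with $d \nmid m$ and the congruence $\iota^{-1}(d\alpha) \equiv d\,\iota^{-1}(\alpha) \pmod{N}$), which the paper dismisses as ``analogous,'' and your treatment of it is correct.
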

\begin{proof}
    Let us show the commutativity of the left diagram. The proof is analogous for the right one. Since all arrows are algebra morphisms, it suffices to show equality on the generators. We have
    \[
        p^d_\ast \circ \mathcal{F}(\tilde{x}) = p^d_\ast(x_0) = d \, x_0 = \mathcal{F}_d(d \, \tilde{x}) = \mathcal{F}_d \circ \tilde{p}^d_\ast(\tilde{x}). 
    \]
    Next, let $\alpha \in \Z/N\Z$. We have
    \[
        \mathcal{F}_d \circ \tilde{p}^d_\ast(\tilde{x}_\alpha) =  \begin{cases}
            \mathcal{F}_d\left( d \tilde{x}_{\nu_d^{-1}(\alpha)} \right) & \text{if } \alpha \in d \Z/N\Z \\
            0 & \text{otherwise}
        \end{cases} =  \begin{cases}
            \displaystyle d \sum_{m=1}^{\frac{N}{d}} \zeta_N^{-m d (\nu_d \circ \iota_d)^{-1}(\alpha)} x_{\zeta_N^{dm}} & \text{if } \alpha \in d \Z/N\Z \\
            0 & \text{otherwise}
        \end{cases}
    \]
    On the other hand, we have
    \begin{align*}
        p^d_\ast \circ \mathcal{F}(\tilde{x}_\alpha) & = \mbox{\footnotesize$\displaystyle\sum_{m=1}^N \zeta_N^{-m \iota^{-1}(\alpha)} x_{\zeta_N^{d m}}$} = \mbox{\footnotesize$\displaystyle\sum_{m=1}^{\frac{N}{d}} \zeta_N^{-m \iota^{-1}(\alpha)} x_{\zeta_N^{d m}} + \sum_{m=\frac{N}{d}+1}^{2\frac{N}{d}} \zeta_N^{-m \iota^{-1}(\alpha)} x_{\zeta_N^{d m}} + \cdots + \sum_{m=(d-1)\frac{N}{d}+1}^N \zeta_N^{-m \iota^{-1}(\alpha)} x_{\zeta_N^{d m}}$} \\
        & = \sum_{j=1}^d \sum_{m_j=1}^{\frac{N}{d}} \zeta_N^{-\left(m_j+(j-1)\frac{N}{d}\right) \iota^{-1}(\alpha)} x_{\zeta_N^{d \left(m_j+(j-1)\frac{N}{d}\right)}} = \sum_{j=1}^d \sum_{m_j=1}^{\frac{N}{d}} \zeta_N^{-\left(m_j+(j-1)\frac{N}{d}\right) \iota^{-1}(\alpha)} x_{\zeta_N^{d m_j}} \\
        & = \sum_{m=1}^{\frac{N}{d}} \sum_{j=1}^d \zeta_N^{-\left(m + (j-1)\frac{N}{d}\right) \iota^{-1}(\alpha)} x_{\zeta_N^{d m}} = \sum_{m=1}^{\frac{N}{d}} \zeta_N^{-m \iota^{-1}(\alpha)} \sum_{j=1}^d \zeta_N^{-(j-1)\frac{N}{d} \iota^{-1}(\alpha)} x_{\zeta_N^{d m}},
    \end{align*}
    where the third equality comes by setting the change of variables $m_j = m - (j-1)\frac{N}{d}$ for any $j \in \llbracket 1, d\rrbracket$ and the fourth one from the factorization by $x_{\zeta_N^{dm}}$, for any $m \in \llbracket 1, \frac{N}{d}\rrbracket$. The summation in the middle is the sum of terms of a geometric sequence with first term $1$ and common ratio $\zeta_N^{-\frac{N}{d}\iota^{-1}(\alpha)}$. In order to compute this summation, we need to consider two cases:
    \begin{caselist}
        \item Assume that $\alpha \in d\Z/N\Z$. 
        {Therefore, there exists a unique $a \in \llbracket 1, \frac{N}{d}\rrbracket$ such that $\alpha = \nu_d \circ \iota_d(a)$. It follows from the commutativity of diagram \eqref{diag:nu_d} that
        \[
            \alpha = \nu_d \circ \iota_d(a) = d \iota(a) = \iota(d a). 
        \]
        This implies that
        $\iota^{-1}(\alpha) = d (\nu_d \circ \iota_d)^{-1}(\alpha)$,
                which implies that
        $\zeta_N^{-\frac{N}{d}\iota^{-1}(\alpha)} = \zeta_N^{-\frac{N}{d} d (\nu_d \circ \iota_d)^{-1}(\alpha)} = 1$.
        }
        We then obtain
        \[
            p^d_\ast \circ \mathcal{F}(\tilde{x}_\alpha) = d \sum_{m=1}^{\frac{N}{d}} \zeta_N^{-m d (\nu_d \circ \iota_d)^{-1}(\alpha)} x_{\zeta_N^{d m}}.
        \]
        \item Assume that $\alpha \notin d \Z/N\Z$. Therefore, $\zeta_N^{-\frac{N}{d}\iota^{-1}(\alpha)} \neq 1$ and then
        \[
            \sum_{j=1}^d \zeta_N^{-(j-1)\frac{N}{d} \iota^{-1}(\alpha)} = 0,
        \]
        which implies that 
        \[
            p^d_\ast \circ \mathcal{F}(\tilde{x}_\alpha) = 0.
        \]
    \end{caselist}
    Thus proving the equality $p^d_\ast \circ \mathcal{F}(\tilde{x}_\alpha) = \mathcal{F}_d \circ \tilde{p}^d_\ast(\tilde{x}_\alpha)$ for any $\alpha \in \Z/N\Z$. 
\end{proof}

\begin{rem}
    Let $d$ be a divisor of $N$ and consider positive integers $r, k_1, \dots, k_r$ with $k_1 \neq 1$ and elements $\alpha_1, \dots, \alpha_r \in d\Z/N\Z$. 
    Let $\beta_1, \dots, \beta_r$ be the unique elements in $\Z/\frac{N}{d}\Z$ such that {$\nu_d(\beta_i) = \alpha_i$}.
    Then we have
    $$
        d^{k_1 + \cdots + k_r}\zeta_{(\alpha_1, \dots, \alpha_r)}^{\bmod N}{(k_1, \dots, k_r)}  = \zeta_{(\beta_1, \dots, \beta_r)}^{\bmod \frac{N}{d}}{(k_1, \dots, k_r)}.
    $$
    Consequently, we obtain the distribution relation
    \begin{equation*}
        d^{k_1 + \cdots + k_r}\zeta_{(\alpha_1, \dots, \alpha_r)}^{\bmod N}{(k_1, \dots, k_r)}
        = \sum_{\substack{d \alpha_i^\prime = \alpha_i \\ 1 \leq i \leq r}} \zeta_{(\alpha_1^\prime, \dots, \alpha^\prime_r)}^{\bmod N}{(k_1, \dots, k_r)}.
      \end{equation*}
\end{rem}

\begin{defn}
    Let $\dmrd_0^{[N]}$ be the $\Q$-Lie subalgebra of $\dmr_0^{[N]}$ defined by
    \[
        \dmrd_0^{[N]} := \Big\{ \widetilde{\psi} \in \dmr_0^{[N]} \mid \forall d | N, \ \tilde{p}^d_\ast(\widetilde{\psi}) = \tilde{i}_d^\ast(\widetilde{\psi}) + (\widetilde{\psi} \mid \tilde{x}_0) \ \sum_{\alpha \in \Z / N \Z} \tilde{x}_\alpha \Big\}.
    \]
\end{defn}

\begin{cor}
    The map $\mathcal{F}$ induces a $\Q(\mu_N)$-Lie algebra isomorphism
    \[
        \left(\Q(\mu_N) \ \hat{\otimes}_\Q \ \dmrd_0^{[N]}, \ \langle-,-\rangle^{\widetilde{}}\right) \xrightarrow{\simeq} \left(\Q(\mu_N) \ \hat{\otimes}_\Q \ \dmrd_0^{\mu_N},\  \langle-,-\rangle\right).
    \]
\end{cor}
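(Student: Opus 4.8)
The plan is to deduce this corollary from Theorem \ref{main theorem} by checking that the single extra defining condition — the finite and regularized distribution relations — is transported correctly by $\mathcal{F}$. Theorem \ref{main theorem}(b) already provides a $\Q(\mu_N)$-Lie algebra isomorphism $\mathcal{F} : \Q(\mu_N)\,\hat{\otimes}_\Q\,\dmr_0^{[N]} \xrightarrow{\simeq} \Q(\mu_N)\,\hat{\otimes}_\Q\,\dmr_0^{\mu_N}$, and $\dmrd_0^{[N]}$ (resp. $\dmrd_0^{\mu_N}$) is by definition the $\Q$-Lie subalgebra of $\dmr_0^{[N]}$ (resp. $\dmr_0^{\mu_N}$) cut out by the family of distribution relations indexed by the divisors $d \mid N$. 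Since these relations are $\Q$-linear in their argument (the maps $\tilde{p}^d_\ast$, $\tilde{i}_d^\ast$ and the correction terms are all $\Q$-linear), forming $\Q(\mu_N)\,\hat{\otimes}_\Q(-)$ is compatible with passing to the subspace they define, so it suffices to prove that $\mathcal{F}$ carries the distribution condition defining $\dmrd_0^{[N]}$ bijectively onto the one defining $\dmrd_0^{\mu_N}$. Once this is established, $\mathcal{F}$ restricts to a $\Q(\mu_N)$-linear bijection of the two tensored subspaces, and it is automatically a Lie algebra isomorphism, being the restriction of the Lie algebra isomorphism of Theorem \ref{main theorem}; its inverse restricts symmetrically.

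The core step is to transport, for each fixed divisor $d \mid N$, the defining relation of $\dmrd_0^{[N]}$ to that of $\dmrd_0^{\mu_N}$. Writing $\psi = \mathcal{F}(\widetilde{\psi})$ and applying the isomorphism $\mathcal{F}_d$ to the identity $\tilde{p}^d_\ast(\widetilde{\psi}) = \tilde{i}_d^\ast(\widetilde{\psi}) + (\text{correction term})$, the two commuting squares of Lemma \ref{lem:pi_tildepi} give $\mathcal{F}_d \circ \tilde{p}^d_\ast = p^d_\ast \circ \mathcal{F}$ and $\mathcal{F}_d \circ \tilde{i}_d^\ast = i_d^\ast \circ \mathcal{F}$, so the left-hand side becomes $p^d_\ast(\psi)$ and the first right-hand term becomes $i_d^\ast(\psi)$. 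Because $\mathcal{F}_d$ is an isomorphism, the relation for $\widetilde{\psi}$ holds if and only if
\[
    p^d_\ast(\psi) = i_d^\ast(\psi) + \mathcal{F}_d(\text{correction term}).
\]
Thus the whole matter reduces to the single identity $\mathcal{F}_d(\text{correction term}) = \sum_{\zeta \in \mu_{N/d}} (\psi \mid x_\zeta)\, x_1$, the correction term appearing in the definition of $\dmrd_0^{\mu_N}$.

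This last identity is where the genuine computation lies, and I expect it to be the main obstacle. On the one hand, one evaluates $\mathcal{F}_d$ on the generators $\tilde{x}_\beta$ (for $\beta \in \Z/\tfrac{N}{d}\Z$) and uses the geometric-sum identity $\sum_j \zeta_N^{Mj} \in \{N/d, 0\}$ from Lemma \ref{lem:iso_mathcalF} to see that, among the letters $x_{\zeta_N^{dm}}$, only the one with $\zeta_N^{dm} = 1$, namely $x_1$, survives, producing a factor $N/d$. On the other hand, expanding $\psi = \mathcal{F}(\widetilde{\psi})$ in its length-one words and applying the same orthogonality of roots of unity identifies the scalar $\sum_{\zeta \in \mu_{N/d}} (\psi \mid x_\zeta)$ with the corresponding scalar read off from $\widetilde{\psi}$ over the $d$-torsion subgroup $\tfrac{N}{d}\Z/N\Z \subset \Z/N\Z$. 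The delicate point is the simultaneous bookkeeping of the two $d$-dependent index sets $\mu_{N/d}$ and $\tfrac{N}{d}\Z/N\Z$ under $\mathcal{F}$ and $\mathcal{F}_d$, together with the verification that the normalizing factors $N/d$ cancel so that the two sides agree exactly. Running the identical argument with $\mathcal{F}^{-1}$ and $\mathcal{F}_d^{-1}$ yields the reverse inclusion, completing the proof that $\mathcal{F}$ restricts to the asserted $\Q(\mu_N)$-Lie algebra isomorphism.
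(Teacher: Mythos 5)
Your proposal reproduces the paper's own proof: the paper likewise deduces the corollary from Theorem \ref{main theorem} and from Lemma \ref{lem:pi_tildepi} applied to the defining identities of the two Lie subalgebras, reducing everything to the single identity $\mathcal{F}_d(\text{congruent correction term}) = \sum_{\zeta \in \mu_{N/d}} (\psi \mid x_\zeta)\, x_1$, which it then asserts without further computation. Your orthogonality sketch --- under $\mathcal{F}_d$ only the letter $x_1$ survives with a factor $N/d$, while the scalar $\sum_{\zeta \in \mu_{N/d}} (\psi \mid x_\zeta)$ equals $N/d$ times the sum of the length-one coefficients of $\widetilde{\psi}$ over the $d$-torsion subgroup $\tfrac{N}{d}\Z/N\Z$ --- is precisely the verification the paper leaves implicit (indeed your $d$-torsion bookkeeping is the correct form of that scalar, which the paper's displayed identity records only loosely via $(\widetilde{\psi} \mid \tilde{x}_0)$), so your attempt matches the paper's argument in both structure and substance.
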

\begin{proof}
    This follows by applying Theorem \ref{main theorem}, Lemma \ref{lem:pi_tildepi} to the defining identities of both Lie algebras and from the identity
    \[
        \mathcal{F}_d \Big((\widetilde{\psi} \mid \tilde{x}_0) \ \sum_{\alpha \in \Z / N \Z} \tilde{x}_\alpha\Big) = \sum_{\zeta \in \mu_{\frac{N}{d}}} (\mathcal{F}_d(\widetilde{\psi}) \mid x_\zeta) x_1,
    \]
    for any $\widetilde{\psi} \in \Q\langle\langle\widetilde{X}\rangle\rangle$
\end{proof}

\begin{rem}
    It can be shown that Theorem \ref{Galois Descent theorem} extend to the pair $(\dmrd_0^{[N]}, \dmrd_0^{\mu_N})$, preserving the $G_N$-invariance characterization property established for $(\mathfrak{dmr}_0^{[N]}, \mathfrak{dmr}_0^{\mu_N})$.
\end{rem}
\bibliographystyle{amsplain}
\bibliography{FY_main}

\providecommand{\bysame}{\leavevmode\hbox to3em{\hrulefill}\thinspace}
\providecommand{\MR}{\relax\ifhmode\unskip\space\fi MR }
\providecommand{\MRhref}[2]{%
  \href{http://www.ams.org/mathscinet-getitem?mr=#1}{#2}
}
\providecommand{\href}[2]{#2}
\begin{thebibliography}{10}

\bibitem{AK04}
T.~Arakawa and M.~Kaneko, \emph{On multiple {L}-values}, Journal of the Mathematical Society of Japan \textbf{56} (2004), no.~4, 967--991.

\bibitem{B15}
H.~Bachmann, \emph{Multiple zeta values and regularised multiple eisenstein series}, Resurgence, Physics and Numbers, CRM Pisa, 2015, pp.~1--20.

\bibitem{BT17}
H.~Bachmann and K.~Tasaka, \emph{The double shuffle relations for multiple eisenstein series}, Nagoya Mathematical Journal \textbf{230} (2017), 180--206.

\bibitem{BY24}
H.~Bachmann and K.~Yaddaden, \emph{On a conjecture of {Z}hao related to standard relations among cyclotomic multiple zeta values}, 2024.

\bibitem{BJOP02}
M.~Bigotte, G.~Jacob, N.~E. Oussous, and M.~Petitot, \emph{Lyndon words and shuffle algebras for generating the coloured multiple zeta values relations tables}, Theoretical Computer Science \textbf{273} (2002), no.~1-2, 271--282.

\bibitem{BF}
J.~I. Burgos and J.~Fresán, \emph{Multiple zeta values from numbers to motives}, http://javier.fresan.perso.math.cnrs.fr/mzv.pdf.

\bibitem{Gon98}
A.~B Goncharov, \emph{Multiple polylogarithms, cyclotomy and modular complexes}, Mathematical Research Letters \textbf{5} (1998), 497--516.

\bibitem{Hof97}
M.~E. Hoffman, \emph{The algebra of multiple harmonic series}, Journal of Algebra \textbf{194} (1997), no.~2, 477--495.

\bibitem{IKZ}
K.~Ihara, M.~Kaneko, and D.~Zagier, \emph{Derivation and double shuffle relations for multiple zeta values}, Compositio Mathematica \textbf{142} (2006), no.~2, 307--338.

\bibitem{K24}
H.~Kanno, \emph{Shuffle regularization for multiple eisenstein series of level {$N$}}, 2024.

\bibitem{Rac02}
G.~Racinet, \emph{Doubles m{\'e}langes des polylogarithmes multiples aux racines de l'unit{\'e}}, Publications math{\'e}matiques de l'IH{\'E}S \textbf{95} (2002), no.~1, 185--231.

\bibitem{Ter04}
T.~Terasoma, \emph{Shuffle regularization for multiple eisenstein series of level {$N$}}, 2004.

\bibitem{YZ15}
H.~Yuan and J.~Zhao, \emph{Double shuffle relations of double zeta values and double eisenstein series at level {$N$}}, Journal of the London Mathematical Society \textbf{92} (2015), no.~3, 520--546.

\bibitem{YZ16}
\bysame, \emph{Bachmann--k{\"u}hn's brackets and multiple zeta values at level {$N$}}, Manuscripta Mathematica \textbf{150} (2016), no.~1-2, 177--210.

\bibitem{Zag94}
D.~Zagier, \emph{Values of zeta functions and their applications}, First European Congress of Mathematics, {Volume II}, Progress in Mathematics, vol. 120, Birkh\"{a}user, Basel, 1994, pp.~497--512.

\bibitem{Zha10}
J.~Zhao, \emph{Standard relations of multiple polylogarithm values at roots of unity}, Documenta Mathematica \textbf{15} (2010), 1--34.

\bibitem{Zha16}
\bysame, \emph{Multiple zeta functions, multiple polylogarithm and their special values}, Series on Number Theory and Its Applications, vol.~12, World Scientific, 2016.

\end{thebibliography}

\end{document}